\newcommand*{\@old@slash}{}\let\@old@slash\slash
\def\slash{\relax\ifmmode\delimiter"502F30E\mathopen{}\else\@old@slash\fi}
\titleformat{\section}{\normalsize\bfseries}{\thesection}{1em}{}
\titleformat{\subsection}{\normalsize\bfseries}{\thesubsection}{1em}{}
\numberwithin{equation}{subsection}
\theoremstyle{plain}
\newtheorem{PropSub}[subsection]{Proposition}
\newtheorem{LemSub}[subsection]{Lemma}
\newtheorem{CorSub}[subsection]{Corollary}
\newtheorem{ThmSub}[subsection]{Theorem}
\newtheorem{PropSubSub}[subsubsection]{Proposition}
\theoremstyle{definition}
\newtheorem{DefSub}[subsection]{Definition}
\newtheorem{ExaSub}[subsection]{Example}
\newtheorem{RemSub}[subsection]{Remark}
\newtheorem{ParSub}[subsection]{}
\newtheorem*{Ack}{Acknowledgement}
\newtheorem{DefSubSub}[subsubsection]{Definition}
\newtheorem{RemSubSub}[subsubsection]{Remark}
\newtheorem{ExaSubSub}[subsubsection]{Example}
\newtheorem{ParSubSub}[subsubsection]{}
\newcommand*{\emptybox}{\leavevmode\hbox{}}
\DeclareMathAlphabet{\mathpzc}{OT1}{pzc}{m}{it}
\DeclareMathAlphabet{\mathcalligra}{T1}{calligra}{m}{n}
\newcommand{\pref}[1]{\textnormal{(\ref{#1})}}
\newcommand{\A}{\ensuremath{\mathscr{A}}}
\newcommand{\B}{\ensuremath{\mathscr{B}}}
\newcommand{\C}{\ensuremath{\mathscr{C}}}
\newcommand{\D}{\ensuremath{\mathscr{D}}}
\newcommand{\E}{\ensuremath{\mathscr{E}}}
\newcommand{\normalJ}{\ensuremath{\mathscr{J}}}
\newcommand{\J}{\ensuremath{{\kern -0.4ex \mathscr{J}}}}
\newcommand{\JF}{\ensuremath{{\kern -0.4ex \mathscr{J}_{\kern -0.05ex F}}}}
\newcommand{\T}{\ensuremath{\mathscr{T}}}
\newcommand{\V}{\ensuremath{\mathscr{V}}}
\newcommand{\X}{\ensuremath{\mathscr{X}}}
\newcommand{\VCAT}{\ensuremath{\V\textnormal{-\text{CAT}}}}
\newcommand{\NN}{\ensuremath{\mathbb{N}}}
\newcommand{\RR}{\ensuremath{\mathbb{R}}}
\newcommand{\TT}{\ensuremath{\mathbb{T}}}
\newcommand{\ob}{\ensuremath{\operatorname{\textnormal{\textsf{ob}}}}}
\newcommand{\ca}[1]{\scalebox{0.85}{\raisebox{0.3ex}{$|$}} #1\scalebox{0.85}{\raisebox{0.3ex}{$|$}}}
\newcommand{\Th}{\ensuremath{\textnormal{Th}}}
\newcommand{\ThJ}{\ensuremath{\Th_{\kern -0.5ex \normalJ}}}
\newcommand{\SubThJ}{\ensuremath{\textnormal{SubTh}_{\kern -0.5ex \normalJ}}}
\newcommand{\Vfp}{\ensuremath{\V_{\kern -0.5ex fp}}}
\newcommand{\TTperpJ}{\ensuremath{\TT^\perp_{\kern-.1ex\scriptscriptstyle\J}}}
\newcommand{\TTperpJwrt}[1]{\ensuremath{\TT^\perp_{\kern-.1ex\scriptscriptstyle\J#1}}}
\newcommand{\inJ}[1]{#1 \in \normalJ\kern -1.2ex}
\newcommand{\VCATJ}{\VCAT_{\kern -0.7ex \normalJ}}
\newcommand{\PhiJ}{\Phi_{\kern -0.8ex \normalJ}}
\newcommand{\MndJ}{\Mnd_{\kern -0.8ex \normalJ}}
\newcommand{\Set}{\ensuremath{\operatorname{\textnormal{\text{Set}}}}}
\newcommand{\Mf}{\ensuremath{\operatorname{\textnormal{\text{Mf}}}}}
\newcommand{\Mnd}{\ensuremath{\operatorname{\textnormal{\text{Mnd}}}}}
\newcommand{\CMon}{\ensuremath{\operatorname{\textnormal{\text{CMon}}}}}
\newcommand{\Alg}[1]{\ensuremath{#1\kern -.5ex\operatorname{\textnormal{-\text{Alg}}}}}
\newcommand{\CAlg}[1]{\ensuremath{#1\kern -.5ex\operatorname{\textnormal{-\text{CAlg}}}}}
\newcommand{\CinftyRing}{\ensuremath{C^\infty\kern -.5ex\operatorname{\textnormal{-\text{Ring}}}}}
\newcommand{\CRProf}{\ensuremath{\operatorname{\textnormal{\text{CRProf}}}}}
\newcommand{\CRProfJ}{\ensuremath{\CRProf_{\kern -0.6ex \normalJ}}}
\newcommand{\DBun}{\ensuremath{\operatorname{\textnormal{\text{DBun}}}}}
\newcommand{\Adjn}[6]{\xymatrix {#1 \ar@/_0.5pc/[rr]_{#2}^(0.4){#4}^(0.6){#5}^{\top} & & #6 \ar@/_0.5pc/[ll]_{#3}}}
\newcommand{\Equiv}[6]{\xymatrix {#1 \ar@/_0.5pc/[rr]_{#2}^(0.4){#4}^(0.6){#5}^{\sim} & & #6 \ar@/_0.5pc/[ll]_{#3}}}
\newcommand{\lt}{\leqslant}
\newcommand{\pushoutcorner}{\ar@{}[dr]|(.3)\ulcorner}
\newcommand{\pullbackcorner}{\ar@{}[dr]|(.3)\lrcorner}
\newcommand{\pover}[3]{#1^{#2{\scriptscriptstyle/#3}}}
\newcommand{\cmt}[1]{}
\begin{document}

\author{\normalsize  Rory B. B. Lucyshyn-Wright\thanks{The author gratefully acknowledges an AARMS PDF held earlier.  The author thanks Geoffrey Cruttwell for useful discussions.}\let\thefootnote\relax\footnote{Keywords: connection; tangent category; linear connection; affine connection; vector bundle; differential bundle}\footnote{2010 Mathematics Subject Classification: 18D99, 53C05, 53B05, 18F15}
\\
\small Brandon University, Brandon, Manitoba, Canada}

\title{\large \textbf{On the geometric notion of connection\\ and its expression in tangent categories}}

\date{}

\maketitle

\abstract{
\textit{Tangent categories} provide an axiomatic approach to key structural aspects of differential geometry that exist not only in the classical category of smooth manifolds but also in algebraic geometry, homological algebra, computer science, and combinatorics.  Generalizing the notion of \textit{(linear) connection} on a smooth vector bundle, Cockett and Cruttwell have defined a notion of connection on a differential bundle in an arbitrary tangent category.  Herein, we establish equivalent formulations of this notion of connection that reduce the amount of specified structure required.  Further, one of our equivalent formulations substantially reduces the number of axioms imposed, and others provide useful abstract conceptualizations of connections.  In particular, we show that a connection  on a differential bundle $E$ over $M$ is equivalently given by a single morphism $K$ that induces a suitable decomposition of $TE$ as a biproduct.  We also show that a connection is equivalently given by a vertical connection $K$ for which a certain associated diagram is a limit diagram.
}

\section{Introduction} \label{sec:intro}

To view a particular geometric space as merely a smooth manifold, merely an analytic space, or merely a scheme sometimes entails neglecting further relevant structure that the space may carry, such as Riemannian or Hermitian structure.  In differential geometry, the concept of \textit{affine connection} captures one such notion of further geometric structure, a notion that is substantially more general than Riemannian structure and yet still gives rise to many important derived phenomena such as geodesics, curvature, parallel transport, and holonomy.  

Cockett and Cruttwell \cite{CoCr:Conn} have defined a notion of affine connection on an object $M$ of any suitably structured category $\X$, so that the classical notion is recovered when $\X$ is the category $\Mf$ of smooth manifolds.  Cockett and Cruttwell work within the framework of \textit{tangent categories} \cite{Ros:Tan,CoCr:TCats}, which are categories $\X$ in which each object $M$ is equipped with a \textit{tangent bundle} $TM$, with an associated morphism $p_M:TM \rightarrow M$ and several further structural morphisms $+_M$, $0_M$, $\ell_M$, and $c_M$, satisfying certain axioms.  The category $\Mf$ is one example of a tangent category, in the company of many other examples, including categories from algebraic geometry, computer science, combinatorics, and homological algebra.

Inspired by formulations of connections established by Ehresmann and by Patterson \cite{Pat:Cx}, Cockett and Cruttwell define an \textit{affine connection} on a given object $M$ of a tangent category $\X$ as a pair $(K,H)$ consisting of morphisms
$$K\;:\;T^2M \rightarrow TM\;\;\;\;\;\;\;H\;:\;TM \times_M TM \rightarrow T^2M$$
in $\X$, satisfying certain axioms, where $T^2M = TTM$ and we write $TM \times_M TM$ to denote the fibre product of $p_M:TM \rightarrow M$ with itself.

The notion of affine connection on a smooth manifold is subsumed by the more general notion of \textit{(linear) connection} on a vector bundle $q:E \rightarrow M$ over $M$, since an affine connection on $M$ is, by definition, a connection on the tangent bundle $p_M:TM \rightarrow M$ of $M$.  In an arbitrary tangent category, one has available a generalization of the notion of vector bundle that is formulated without reference to the ring of real numbers, namely the notion of \textit{differential bundle} \cite{CoCr:DBun}.  Explicitly, a differential bundle
$$\underline{E} = (E,q,\sigma,\zeta,\lambda)$$
over an object $M$ of $\X$ consists of a morphism $q:E \rightarrow M$ in $\X$ equipped with the structure of a commutative monoid $(E,q,\sigma,\zeta)$ in the slice category $\X \slash M$, together with a specified morphism $\lambda:E \rightarrow TE$ called the \textit{lift}, satisfying certain axioms.  In particular, the tangent bundle $p_M:TM \rightarrow M$ carries the structure of a differential bundle, which we write herein as
$$\underline{T}M = (TM,p_M,+_M,0_M,\ell_M),$$
where $+_M$, $0_M$, $\ell_M$ are certain of the structural morphisms possessed by the tangent category $\X$.

Cockett and Cruttwell define a \textit{connection} \cite{CoCr:Conn} on a differential bundle $\underline{E}$ over $M$ as a pair $(K,H)$ consisting of morphisms
$$K\;:\;TE \rightarrow E\;\;\;\;\;\;\;H\;:\;E \times_M TM \rightarrow TE$$
in $\X$, satisfying several axioms; see \ref{def:cx} below\footnote{As in \cite{CoCr:Conn}, we shall only consider connections on differential bundles $\underline{E}$ that satisfy the following blanket assumption:  For all natural numbers $m,n \in \NN$ the fibre product of $m$ instances of $E$ and $n$ instances of $TM$, over the base object $M$, exists in $\X$ and is preserved by $T^k$ for each $k \in \NN$ \pref{par:std_assn}.}.  A connection in this sense consists of a \textit{vertical connection} $K$ and a \textit{horizontal connection} $H$ that are suitably compatible.  The authors establish certain conditions under which a horizontal connection $H$ determines a compatible vertical connection $K$, and vice versa, except that the passage from a vertical connection $K$ to a compatible horizontal connection $H$ is achieved only under the assumption that \textit{there exists some} (not necessarily compatible) horizontal connection on $\underline{E}$ \cite[Propositions 5.12, 5.13]{CoCr:Conn}.

\medskip

In the present paper, we establish equivalent formulations of the above notion of connection that reduce the amount of specified structure required.  Further, one of our equivalent formulations substantially reduces the number of axioms imposed, and others provide useful abstract conceptualizations of connections, in terms of the standard notion of \textit{biproduct}.  We show that the original axioms for connections in tangent categories arise as subtle generic features of certain biproducts of differential bundles, and this analysis enables us to remove some of these axioms through a new formulation in terms of just a single morphism $K$.  The fact that we have been able to establish these useful equivalent formulations is a further testament to the coherence and suitability of the original definition of Cockett and Cruttwell.

The key idea that enables our approach to connections is the fact that a connection $(K,H)$ on a differential bundle $\underline{E}$ over $M$ induces a `coordinatization' of $TE$, by providing a decomposition of $TE$ as a fibre product
\begin{equation}\label{eq:fp_dec}TE = E \times_M TM \times_M E\end{equation}
over $M$.  Indeed, Cockett and Cruttwell have proved in \cite[Prop. 5.8]{CoCr:Conn} that the diagram
\begin{equation}\label{eq:cx_ld_i}
\xymatrix{
            & TE \ar[dl]_{p_E} \ar[d]|{T(q)} \ar[dr]^K & \\
E \ar[dr]_q & TM \ar[d]|{p_M}                          & E \ar[dl]^q\\
   & M                                                 &
}
\end{equation}
presents $TE$ as such a fibre product.  In particular, an affine connection $(K,H)$ allows us to express the second tangent bundle $T^2M$ as fibre product $T^2M = TM \times_M TM \times_M TM$ over $M$ of three instances of $TM$.  In this way, an affine connection allows second-order tangent vectors to be expressed as triples of tangent vectors at a common point.  In the case where $\X$ is the category $\Mf$ of smooth manifolds, this phenomenon was discovered by Dombrowski \cite{Dom}, who defined the map $K$ in terms of a given Koszul connection $\nabla$.  As a simple example in $\Mf$, one may take $M = \RR^n$, so that $M$ carries a canonical affine connection that allows us to express $T^2M$ as a three-fold fibre product $TM \times_M TM \times_M TM = M \times M^3 = M^4$ of $TM = M^2$ over $M$.

Herein we show that a connection on a differential bundle $\underline{E} = (E,q,\sigma,\zeta,\lambda)$ over $M$ can be described equivalently as a morphism $K$ inducing a fibre product decomposition \eqref{eq:fp_dec} that satisfies certain natural further conditions.  One of our results of this type is as follows:
$$
\begin{minipage}{5.0in}
\textbf{Theorem \ref{thm:main}(2).}  \textit{A connection on $\underline{E}$ is equivalently given by a retraction $K:TE \rightarrow E$ of $\lambda$ such that $TE$ underlies a product of differential bundles $\underline{E} \times_M \underline{T}M \times_M \underline{E}$ over $M$ whose third projection is $K$ and whose first and second partial bundles \pref{def:jth_pb} are
$$\underline{T}E = (TE,p_E,+_E,0_E,\ell_E)\;\;\;\;\text{and}\;\;\;\;T\underline{E} = (TE,T(q),T(\sigma),T(\zeta),T(\lambda)),$$ 
respectively.}
\end{minipage}
$$
A product $\underline{E} \times_M \underline{T}M \times_M \underline{E}$ satisfying the specified conditions is not only unique up to isomorphism if it exists, but moreover its underlying fibre product diagram in $\X$ must be precisely \eqref{eq:cx_ld_i}.

Here we have employed the notion of \textit{$j$-th partial bundle}, which we now explain.  Given a finite product of differential bundles $\underline{F} = \underline{F_1} \times_M ... \times_M \underline{F_n}$ over $M$, if certain products of differential bundles exist, then the underlying object $F$ acquires the structure of a differential bundle over $F_j$, for each $j \in \{1,...,n\}$, which we call the $j$-th \textit{partial bundle} of the given product \pref{def:jth_pb}.  For example, the \textit{first partial bundle} of the given product is the pullback of $\underline{F_2} \times_M ... \times_M \underline{F_n}$ along $F_1 \rightarrow M$.

The category $\DBun_M$ of differential bundles over an object $M$ is an \textit{additive category}, in the sense that it is enriched in commutative monoids, so finite products in $\DBun_M$ are biproducts, which we also call \textit{Whitney sums} with a nod to the classical case.  It follows that we can reformulate the preceding theorem as follows:
$$
\begin{minipage}{5.0in}
\textbf{Theorem \ref{thm:main}(1).}  \textit{A connection on $\underline{E}$ is equivalently given by a morphism $K:TE \rightarrow E$ such that $TE$ underlies a biproduct of differential bundles $\underline{E} \oplus_M \underline{T}M \oplus_M \underline{E}$ over $M$ whose third projection is $K$, whose third injection is $\lambda$, and whose first and second partial bundles are $\underline{T}E$ and $T\underline{E}$, respectively.}
\end{minipage}
$$
The first and second injections of the biproduct are then necessarily $0_E:E \rightarrow TE$ and $T(\zeta):TM \rightarrow TE$.

This formulation in terms of biproducts gives us a straightforward recipe for recovering the associated horizontal connection $H$:
$$
\begin{minipage}{4.7in}
\textit{Given a morphism $K$ as in Theorem \ref{thm:main}(1), the associated horizontal connection $H:E \times_M TM \rightarrow TE$ is precisely the canonical injection
$$H\;\;:\;\;\underline{E} \oplus_M \underline{T}M \longrightarrow \underline{E} \oplus_M \underline{T}M \oplus_M \underline{E}$$
of the first two factors of the given biproduct.}
\end{minipage}
$$

In the formulation of Cockett and Cruttwell, a morphism $K:TE \rightarrow E$ is called a vertical connection on the differential bundle $\underline{E} = (E,q,\sigma,\zeta,\lambda)$ if it is a retraction of $\lambda$ and satisfies axioms (C.1) and (C.2) in \ref{def:cx} below, while a morphism $H:E \times_M TM \rightarrow TE$ is called a horizontal connection on $\underline{E}$ if it is a section of $\langle p_E, T(q) \rangle$ and satisfies axioms (C.3) and (C.4) of \ref{def:cx}.  A pair $(K,H)$ consisting of a vertical connection and a horizontal connection is then called a connection if it satisfies two further axioms, (C.5) and (C.6) in \ref{def:cx}, which demand that $K$ and $H$ be compatible in a suitable sense.  Collectively, the resulting axioms for a connection assert the equality of ten pairs of morphisms in $\X$ (cf. \cite[Lemmas 3.3, 4.6, Def. 5.2]{CoCr:Conn}).  Cockett and Cruttwell prove that if the tangent category $\X$ \textit{has negatives}, meaning that the commutative monoid structure on $\underline{T}M$ is an abelian group for each object $M$, then for any horizontal connection $H$ on $\underline{E}$ there is an associated vertical connection $K$ such that $(K,H)$ is a connection \cite[Prop. 5.12]{CoCr:Conn}.  Again supposing that $\X$ has negatives, Cockett and Cruttwell also prove a result in the opposite direction, except with an additional hypothesis:  If $K$ is a vertical connection on $\underline{E}$ and there exists a horizontal connection on $\underline{E}$, then we can associate to $K$ a horizontal connection $H$ for which $(K,H)$ is a connection \cite[Prop. 5.13]{CoCr:Conn}.

Without assuming negatives or the existence of a horizontal connection on the given differential bundle $\underline{E}$, we prove herein that if a vertical connection $K$ satisfies the further assumption that \eqref{eq:cx_ld_i} is a fibre product diagram in $\X$, then there is a unique horizontal connection $H$ on $\underline{E}$ such that $(K,H)$ is a connection.  This leads to the following economical formulation of connections:
$$
\begin{minipage}{5.0in}
\textbf{Theorem \ref{thm:main}(3).}  \textit{A connection on $\underline{E}$ is equivalently given by a vertical connection $K:TE \rightarrow E$ on $\underline{E}$ such that \eqref{eq:cx_ld_i} is a fibre product diagram in $\X$.
}
\end{minipage}
$$
Thus we have obtained an equivalent formulation of connections in terms of a single morphism $K$ that is required to satisfy the equational axioms for a vertical connection \pref{def:cx} (representing a total of four equations) together with a single `exactness' axiom.  We call a vertical connection $K$ satisfying this additional axiom an \textit{effective vertical connection}.  It follows that a vertical connection $K$ is effective if and only if there exists a (necessarily unique) horizontal connection $H$ such that $(K,H)$ is a connection \pref{thm:eq_cond_k}.

Looking toward future work in tangent categories, the axiomatic simplicity and conceptual insights afforded by the above formulations of connections hold the potential to facilitate work with connections and guide the exploration of notions of \textit{higher-order connection}\footnote{In the context of synthetic differential geometry, notions of higher connection have been introduced in \cite{LaNi,Kock:HiCx}.} that suitably decompose the higher tangent bundles.  In fact, Theorem \ref{thm:main}(3) is applied in \cite{BCL} to facilitate the study of categories of \textit{geometric spaces} associated to tangent categories.

\begin{Ack}
The author thanks the anonymous referee for helpful suggestions, including the idea of introducing a term for limits preserved by the iterates of the tangent functor \pref{def:tangent_limit}.
\end{Ack}

\section{Background}

\subsection{Basic categorical notions and notations}

\begin{ParSubSub}\label{par:cat_notn}
With regard to basic categorical matters, we shall mostly adhere to the notational conventions of \cite{CoCr:TCats}, which are very efficient for some kinds of equational reasoning in tangent categories but require a mild departure from common notations for functor application and whiskering, as follows.  Given morphisms $f:A \rightarrow B$ and $g:B \rightarrow C$ in a category $\C$, we denote the associated composite morphism $A \rightarrow C$ by $fg$, thus employing diagrammatic composition order unless otherwise indicated.  Nevertheless, the result of applying a functor $H$ to a morphism $g$ is written as $H(g)$.  Correspondingly, we will write the composite of functors $G:\B \rightarrow \C$ and $H:\C \rightarrow \D$ in non-diagrammatic order as $HG$ (even though this is not the practice in \cite{CoCr:TCats}).  Given a natural transformation $\gamma:G \Rightarrow G':\B \rightarrow \C$ and functors $F:\A \rightarrow \B$ and $H:\C \rightarrow \D$, we denote the resulting `whiskered' natural transformations by $H(\gamma):HG \Rightarrow HG'$ and $\gamma_F:GF \Rightarrow GF'$.  Given a product of a sequence of $n$ objects in a given category, where $n$ is a natural number, we denote the product projections by $\pi_1,...,\pi_n$ so that our notation accords directly with locutions such as ``the first projection'' (cf. \cite[\linebreak p. 334]{CoCr:TCats}).

All given categories herein are assumed locally small.
\end{ParSubSub}

\begin{ParSubSub}\label{par:fi_pr}
Given an object $C$ of a category $\C$, we shall denote by $\C\slash C$ the \textit{slice category} over $\C$, whose objects we shall write as pairs $(B,f)$ consisting of an object $B$ of $\C$ equipped with a morphism $f:B \rightarrow C$ in $\C$.  We will sometimes omit explicit mention of the associated morphism $f$.  Given a family of morphisms $(f_i:B_i \rightarrow C)_{i \in I}$ in $\C$, a \textbf{fibre product (over $C$)} of the family $(f_i)_{i \in I}$ is, by definition, a product in $\C \slash C$ of the family of objects $(B_i,f_i)$ $(i \in I)$.  We shall denote such a fibre product by $\prod_{i \in I}^M (B_i,f_i)$, or $\prod_{i \in I}^M B_i$ when the associated morphisms $f_i$ are understood.  When $I = \{1,2,...n\}$ for some $n$ natural number $n \in \NN$, we will often denote such a fibre product by $B_1 \times_M B_2 \times_M ... \times_M B_n$.

A fibre product of $(f_i)_{i \in I}$ in $\C$ is equivalently described as a limit, in $\C$, of the diagram consisting of the object $C$ together with the morphisms $f_i:B_i \rightarrow C$ $(i \in I)$.  In the case where $I$ is empty, a cone for this diagram is given by just a single object of $\C \slash C$.  On the other hand, if $I$ is nonempty, then a cone for this diagram is equivalently given by an object $A$ of $\C$ equipped with a family of morphisms $(g_i:A \rightarrow B_i)_{i \in I}$ with the property that $g_if_i = g_jf_j$ for all $i,j \in I$.  Still supposing that $I$ is nonempty, a fibre product $\prod_{i \in I}^M B_i$ over $C$ is equipped with a universal such family $(\pi_i:\prod_{i \in I}^M B_i \rightarrow B_i)_{i \in I}$, which is therefore a jointly monic family of morphisms \textit{in $\C$}.  We shall call the diagram in $\C$ consisting of the morphisms $\pi_i$ and $f_i$ $(i \in I)$ a \textbf{fibre product diagram}.

Given a single morphism $f:B \rightarrow C$ in $\C$ and a natural number $n$, an $n$-th \textbf{fibre power} of $f$ in $\C$ is, by definition, a fibre product over $C$ of the constant family $(f:B \rightarrow C)_{i \in \NN,\;1 \lt i \lt n}$.  The $n$-th fibre power of $f$ shall be denoted by $f^{(n)}:\pover{B}{n}{C} \rightarrow C$.
\end{ParSubSub}

\subsection{Additive categories, commutative monoids, and biproducts}

\begin{ParSubSub}\label{par:add_cat}
By an \textbf{additive category} we shall mean a locally small category $\A$ in which each hom-set $\A(A,B)$ with $A,B \in \ob\A$ is equipped with the structure of a commutative monoid, with operations written as $+$ and $0$, such that for every morphism $f:A \rightarrow B$ and every object $C$, the mappings $\A(C,f):\A(C,A) \rightarrow \A(C,B)$ and $\A(f,C):\A(B,C) \rightarrow \A(A,C)$ are homomorphisms of commutative monoids.  Additive categories in this sense are equivalently described as categories enriched in the symmetric monoidal closed category of commutative monoids.  Our usage of the term \textit{additive category} is non-standard, as the term is more often used to refer to categories that are enriched in abelian groups and have finite biproducts.  We shall say that a functor $F:\A \rightarrow \B$ between additive categories is an \textbf{additive functor} if each mapping $F_{AB}:\A(A,B) \rightarrow \B(FA,FB)$ is a homomorphism of commutative monoids, where $A,B \in \ob\A$.

A \textbf{(finite) biproduct} of a given finite family of objects $(A_i)_{i \in I}$ in an additive category $\A$ is, by definition, an object $A = \oplus_{i \in I}A_i$ of $\A$ equipped with morphisms $\pi_i:A \rightarrow A_i$ and $\iota_i:A_i \rightarrow A$ $(i \in I)$ such that (1) $\iota_i\pi_i = 1_{A_i}:A_i \rightarrow A_i$ for all $i \in I$, (2) $\iota_i\pi_j = 0:A_i \rightarrow A_j$ for all $i,j \in I$ with $i \neq j$, and (3) $\sum_{i \in I} \pi_i\iota_i = 1_A:A \rightarrow A$, where the latter finite sum is taken in the commutative monoid $\A(A,A)$.  We call the morphisms $\pi_i$ and $\iota_i$ \textbf{projections} and \textbf{injections}, respectively.  Clearly any additive functor $F:\A \rightarrow \B$ preserves finite biproducts, in the evident sense.

Given a finite biproduct $A = \oplus_{i \in I}A_i$ in an additive category $\A$, the associated morphisms $\pi_i$ necessarily present $A$ as a product $\Pi_{i \in I}A_i$ in $\A$, and the morphisms $\iota_i$ present $A$ as a coproduct\footnote{In fact, the notion of biproduct is sometimes defined in terms of specified product and coproduct structure satisfying conditions (1) and (2) of the preceding paragraph.} $\amalg_{i \in I}A_i$ in $\A$.

Conversely, any finite product $\Pi_{i \in I}A_i$ in an additive category $\A$ necessarily carries the structure of a biproduct $\oplus_{i \in I}A_i$.  Indeed, the needed morphisms $\iota_i:A_i \rightarrow A$ are uniquely defined by conditions (1) and (2) in the above definition of biproduct.

As a consequence, every additive functor preserves finite products.
\end{ParSubSub}

\begin{ParSubSub}\label{par:cmonc}
Given an arbitrary category $\C$, we denote by $\CMon(\C)$ the \textbf{category of commutative monoids in} $\C$.  Explicitly, an object $C$ of $\CMon(\C)$ is given by an object $\ca{C}$ of $\C$ equipped with \textit{specified} finite powers $\ca{C}^n$ $(n \in \NN)$ and morphisms $0:1 \rightarrow \ca{C}$ and $+:\ca{C}^2 \rightarrow \ca{C}$ satisfying the usual diagrammatic associativity, commutativity, and unit axioms; a morphism in $\CMon(\C)$ is given by a morphism in $\C$ that preserves the operations $0$ and $+$.  Hence $\CMon(\C)$ is isomorphic to the category of $\T$-algebras in $\C$ for the Lawvere theory $\T$ of commutative monoids.

Note that $\CMon(\C)$ carries the structure of an additive category \pref{par:add_cat}, with the obvious `pointwise' addition operation on morphisms.  Hence finite products in $\CMon(\C)$ are equivalently described as finite biproducts in $\CMon(\C)$.  Further, finite products in $\CMon(\C)$ are formed as in $\C$:
\end{ParSubSub}

\begin{PropSubSub}\label{thm:bipr_cmonc}
Let $(C_i)_{i \in I}$ be a finite family of commutative monoids in an arbitrary category $\C$, with underlying objects $\ca{C_i}$ in $\C$.  Then a product $\prod_{i \in I}C_i$ in $\CMon(\C)$ is equivalently given by a product $X = \prod_{i \in I}\ca{C_i}$ in $\C$ equipped with specified finite powers $X^n$ in $\C$ $(n \in \NN)$.
\end{PropSubSub}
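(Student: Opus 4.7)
The plan is to verify the equivalence in both directions: (a) given a product $X = \prod_{i \in I} \ca{C_i}$ in $\C$ with specified finite powers $X^n$, lift it to a product in $\CMon(\C)$; and (b) conversely, given a product $\prod_{i \in I} C_i$ in $\CMon(\C)$, show its underlying object in $\C$ is such a product.

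For direction (a), writing $\pi_i : X \to \ca{C_i}$ for the given projections and $p_1, p_2 : X^2 \to X$ for the power projections, I would define commutative monoid operations on $X$ coordinatewise: $0_X : 1 \to X$ and $+_X : X^2 \to X$ are the unique morphisms determined by $0_X \pi_i = 0_i$ and $+_X \pi_i = \langle p_1 \pi_i, p_2 \pi_i\rangle \cdot +_i$ for each $i \in I$. The commutative monoid axioms for $X$, and the universal property of the resulting object in $\CMon(\C)$, then follow by postcomposing the relevant equalities with each $\pi_i$ and invoking the corresponding properties of each $C_i$.

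Direction (b) is the main obstacle, since we must upgrade a universal property against monoid morphisms to one in $\C$. The key is that $\CMon(\C)$ is additive \pref{par:cmonc}, so the product $C = \prod_i C_i$ is in fact a biproduct \pref{par:add_cat}, equipped with injections $\iota_i : C_i \to C$ satisfying $\iota_i \pi_i = 1_{C_i}$, $\iota_i \pi_j = 0$ for $i \neq j$, and $\sum_j \pi_j \iota_j = 1_C$. For any $Y$ in $\C$, the hom-set $\C(Y, X)$ inherits a pointwise commutative monoid structure from the monoid $X$, with respect to which pre- and post-composition by monoid morphisms are monoid homomorphisms. Given morphisms $f_i : Y \to \ca{C_i}$ in $\C$, I would define $f := \sum_i f_i \iota_i$ in $\C(Y, X)$ and verify $f \pi_j = \sum_i f_i \iota_i \pi_j = f_j$ using the biproduct identities; for uniqueness, any $g : Y \to X$ with $g \pi_i = f_i$ for all $i$ decomposes as $g = g \cdot 1_X = g \cdot \sum_j \pi_j \iota_j = \sum_j f_j \iota_j$. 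The subtle verification here is that the pointwise operations on $\C(Y, X)$ agree with those induced by the additive enrichment of $\CMon(\C)$ on hom-sets between monoid objects, ensuring the biproduct identities transfer faithfully from $\CMon(\C)$ to $\C$.
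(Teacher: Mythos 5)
Your proposal is correct. Direction (a) matches the paper's (which simply cites it as well known and easily verified), but for the converse direction you take a genuinely different route. The paper argues representably: for each object $Z$, the finite-product-preserving functor $\C(Z,-)$ lifts to an additive functor $\CMon(\C) \rightarrow \CMon(\Set)$, which preserves finite products by additivity; composing with the limit-preserving forgetful functor $\CMon(\Set) \rightarrow \Set$ shows that each representable sends the product cone to a product cone of sets, whence the underlying cone is a product in $\C$. You instead exploit the biproduct presentation of $\prod_{i \in I} C_i$ directly inside $\C$, using the convolution monoid structure that the internal monoid $X$ induces on each hom-set $\C(Y,X)$ to make sense of $\sum_i f_i\iota_i$ and of the decomposition $g = g\cdot\sum_j \pi_j\iota_j$. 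Both arguments ultimately rest on the same fact (additivity of $\CMon(\C)$ forces products to be biproducts), but yours avoids any appeal to limit preservation by the forgetful functor $\CMon(\Set) \rightarrow \Set$, at the price of the compatibility check you correctly flag: that the pointwise addition on $\C(Y,X)$ restricts, on monoid morphisms, to the additive enrichment of $\CMon(\C)$ --- which holds by the paper's definition of that enrichment as the ``pointwise'' one. The paper's version is shorter because it outsources the biproduct manipulations to $\Set$; yours is more self-contained and makes explicit exactly which equations in $\C$ witness the universal property.
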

\begin{proof}
Given a product $X = \prod_{i \in I}\ca{C_i}$ in $\C$ with specified finite powers, it is well-known and easily verified that there are unique morphisms $+:X^2 \rightarrow X$ and $0:X^0 \rightarrow X$ in $\C$ such that $C = (X,+,0)$ is a commutative monoid and the projections $\pi_i:X \rightarrow \ca{C_i}$ are morphisms of commutative monoids $\pi_i:C \rightarrow C_i$.  Conversely, let $C = \prod_{i \in I}C_i$ be a product in $\CMon(\C)$.  Given any object $Z$ of $\C$, the functor $\C(Z,-):\C \rightarrow \Set$ preserves finite products and hence lifts to an additive functor $\C(Z,-)_*:\CMon(\C) \rightarrow \CMon(\Set)$, which necessarily preserves finite products, by \ref{par:add_cat}.  The forgetful functor $\CMon(\Set) \rightarrow \Set$ preserves limits, so the composite functor
$$\CMon(\C) \xrightarrow{\C(Z,-)_*} \CMon(\Set) \longrightarrow \Set$$
preserves finite products.  But this composite functor sends the product cone $(\pi_i:C \rightarrow C_i)_{i \in I}$ to the cone $(\C(Z,\pi_i):\C(Z,\ca{C}) \rightarrow \C(Z,\ca{C_i}))_{i \in I}$, which is therefore a product cone in $\Set
$.  Therefore, the morphisms $\pi_i:\ca{C} \rightarrow \ca{C_i}$ present $\ca{C}$ as a product in $\C$.
\end{proof}

\begin{RemSubSub}\label{par:cmon_pr_fp}
The preceding proposition shows that the forgetful functor $\CMon(\C) \rightarrow \C$ preserves finite products, \textit{even if it does not have a left adjoint and $\C$ does not have finite products}.  By contrast, familiar arguments to the effect that algebraic functors preserve limits do not apply in such circumstances.
\end{RemSubSub}

\begin{ParSubSub}\label{par:add_bun}
Let $\X$ be an arbitrary category.  Given an object $M$ of $\X$, a \textbf{commutative monoid over $M$}, also called an \textbf{additive bundle} over $M$ is, by definition, a commutative monoid $((E,q),+,0)$ in the slice category $\C \slash M$ (so that $q:E \rightarrow M$ in $\X$).  In view of \ref{par:cmonc}, every commutative monoid $(E,q,+,0)$ over $M$ is equipped with an $n$-th fibre power $q^{(n)}:\pover{E}{n}{M} \rightarrow M$ of $q$ for each $n \in \NN$.  Given commutative monoids $\underline{E} = (E,q,+,0)$ and $\underline{F} = (F,r,+,0)$ over $M$ and $N$, respectively, a \textbf{morphism of additive bundles} $(g,f):\underline{E} \rightarrow \underline{F}$ consists of a pair of morphisms $g:E \rightarrow F$ and $f:M \rightarrow N$ in $\X$ with $qf = gr$ such that the diagrams
$$
\xymatrix{
E \ar[r]^{g}        & F          & & E \times_M E \ar[d]_+ \ar[r]^{g \times g} & F \times_N F \ar[d]^+\\
M \ar[u]^0 \ar[r]_f & N \ar[u]_0 & & E \ar[r]_g                                & F
}
$$
commute, where $g \times g = \langle\pi_1g,\pi_2g\rangle$ denotes the morphism induced by $g$ on each factor.
\end{ParSubSub}

\subsection{Tangent categories}

\begin{ParSubSub}\label{par:abs}
Let $\X$ be a category equipped with an endofunctor $T:\X \rightarrow \X$ and a natural transformation $p:T \rightarrow 1_\X$.  Suppose that for each object $M$ of $\X$ and each natural number $n \in \NN$, the morphism $p_M:TM \rightarrow M$ has an $n$-th fibre power 
$$p^{(n)}_M\;\;:\;\;T_nM = \pover{(TM)}{n}{M} \longrightarrow M$$
in $\X$ that is preserved by the $m$-fold composite functor $T^m$ for each $m \in \NN$.  These fibre powers induce a functor $T_n:\X \rightarrow \X$ for each $n \in \NN$.  Given natural transformations $+:T_2 \rightarrow T$, $0:1_\X \rightarrow T$, let us say that $\underline{T} := (T,p,+,0)$ is an \textbf{additive bundle structure} on $\X$ if for each object $M$ of $\X$, $(TM,p_M,+_M,0_M)$ is a commutative monoid over $M$ \pref{par:add_bun}.
\end{ParSubSub}

\begin{ParSubSub}[\textbf{Notation}]\label{par:notn_abs}
Given an object $M$ of a category $\X$ equipped with an additive bundle structure $\underline{T} = (T,p,+,0)$, let
$$\underline{T}M = (TM,p_M,+_M,0_M)$$
denote the associated commutative monoid over $M$.  In particular, $T^2M = TTM$ carries the structure of a commutative monoid over $TM$, namely
$$\underline{T}(TM) = (T^2M,p_{TM},+_{TM},0_{TM})\;.$$
Also, given any commutative monoid $\underline{E} = (E,q,\sigma,\zeta)$ over $M$, whose addition and zero morphisms we write as $\sigma:E \times_M E \rightarrow E$ and $\zeta:M \rightarrow E$, respectively, note that if $T$ preserves finite fibre powers of $q$ then
\begin{equation}\label{eq:te_add}T\underline{E} = (TE,T(q),T(\sigma),T(\zeta))\end{equation}
is a commutative monoid over $TM$ in $\X$.  In particular, this entails that 
$$T(\underline{T}M) = (T^2M,T(p_M),T(+_M),T(0_M))$$
is a commutative monoid over $TM$ in $\X$, so that $T^2M$ thus carries \textit{two} canonical commutative monoid structures over $TM$.
\end{ParSubSub}

\begin{ParSubSub}\label{par:tcats}
By definition, a \textbf{tangent category} \cite{CoCr:TCats} is a category $\X$ equipped with an additive bundle structure $\underline{T} = (T,p,+,0)$ and natural transformations 
$$\ell:T \rightarrow T^2 = TT\;,\;\;\;\;\;\;c:T^2 \rightarrow T^2,$$
called the \textit{vertical lift} and the \textit{canonical flip}, such that
\begin{enumerate}
\item for each object $M$ of $\X$, 
\begin{enumerate}
\item[] $(\ell_M,0_M)\;\;:\;\;\underline{T}M \rightarrow T(\underline{T}M),$
\item[] $(c_M,1_{TM})\;\;:\;\;T(\underline{T}M) \rightarrow \underline{T}(TM)$ 
\end{enumerate}
are morphisms of additive bundles \pref{par:add_bun};
\item the following equations hold (recalling our notations for whiskering \ref{par:cat_notn})
\begin{enumerate}
\item[] $cc = 1\;:\;T^2 \rightarrow T^2,\;\;\;\ell c = \ell\;:\;T \rightarrow T^2,\;\;\;\ell T(\ell) = \ell\ell_T\;:\;T \rightarrow T^3$,
\item[] $T(c)c_TT(c) = c_TT(c)c_T\;:\;T^3 \rightarrow T^3,\;\;\;\ell_TT(c)c_T = cT(\ell)\;:\;T^2 \rightarrow T^3$;
\end{enumerate}
\item the axiom of \textit{universality of the vertical lift} \cite[2.3]{CoCr:TCats} holds.
\end{enumerate}
We say that a tangent category $\X$ \textbf{has negatives} if for every object $M$ of $\X$ the commutative monoid $(TM,p_M,+_M,0_M)$ is an abelian group in $\X \slash M$.
\end{ParSubSub}

\begin{ExaSubSub}[\textbf{Local coordinates for the second tangent bundle}]\label{exa:sman}
The category of smooth manifolds $\Mf$ is a tangent category \cite{CoCr:TCats}.  For each smooth manifold $M$, $p_M:TM \rightarrow M$ is the tangent bundle of $M$, and $T:\Mf \rightarrow \Mf$ assigns to each smooth map $f$ its associated differential $T(f)$.  The structural morphisms $p_M,+_M,0_M,\ell_M,c_M$ admit simple descriptions in terms of local coordinates, by applying the following description on each coordinate patch \cite{CoCr:TCats}.  Letting $V = \RR^n$ be a finite-dimensional vector space, we can express the tangent bundle of $V$ as the first\footnote{\label{fn:fst_pr}In \cite{CoCr:TCats} the \textit{second} projection is employed in such situations, whereas we now employ instead the first projection in order to accord with the common differential-geometric convention of writing the base-point $x$ in the leftmost coordinate and the vector $t$ in the rightmost.} projection $p_V = \pi_1:TV = V^2 = V \times V \rightarrow V$, so that elements of $TV$ are pairs $(x,t)$ consisting of a \textit{point} $x \in V$ and a \textit{tangent vector} $t \in V$, with $p_V(x,t) = x$.  Now $0_V:V \rightarrow TV$ is the map $V \rightarrow V^2$ given by $x \mapsto (x,0)$.  The second fibre power $T_2V \rightarrow V$ of $p_V$ may be expressed as the first projection $\pi_1:V^3 \rightarrow V$, and the fibrewise addition map $+_V:T_2V \rightarrow TV$ is the map $V^3 \rightarrow V^2$ given by $(x,t_1,t_2) \mapsto (x,t_1 + t_2)$.

Since the `total space' $TV = V^2$ of the tangent bundle of $V = \RR^n$ is itself a finite-dimensional vector space, its tangent bundle $p_{TV}:T^2V = TTV \rightarrow TV$ is the first projection\footnote{\label{fn:l_to_r}In \cite{CoCr:TCats}, the second projection is used  (cf. Footnote \ref{fn:fst_pr}) and hence the components of $T^2V = V^4$ are written in the opposite order therein, with the base-point in the rightmost coordinate.} $\pi_1:(V \times V) \times (V \times V) \longrightarrow V \times V$, which (upon omitting parentheses) we may express as
$$p_{TV} = \langle\pi_1,\pi_2\rangle\;\;:\;\;T^2V = V^4 \longrightarrow V^2 = TV,\;\;\;\;(x,t,u,v) \mapsto (x,t)\;.$$
Hence we regard an element $\delta = (x,t,u,v)$ of $T^2V = V^4$ as consisting of a base-point $x$, a tangent vector $t$ at $x$, and a tangent vector $(u,v)$ at $(x,t)$.  Further, we regard $u$ as a tangent vector at $x$, and we regard $v$ as a tangent vector at $t$ (since the differentials $T(\pi_1),T(\pi_2):T(V^2) \rightarrow TV$ send $\delta$ to the elements $(x,u),(t,v) \in TV = V^2$, respectively).  For this reason, we think of $v$ as a \textit{second-order} tangent vector, whereas we regard $t$ and $u$ as \textit{first-order} tangent vectors.

The canonical flip $c_V:T^2V \rightarrow T^2V$ is given by
$$c_V = \langle\pi_1,\pi_3,\pi_2,\pi_4\rangle\;:\;T^2V = V^4 \longrightarrow V^4 = T^2V,\;\;(x,t,u,v) \mapsto (x,u,t,v)$$
The map $T(p_V)\;:\;T^2V \rightarrow TV$ can be expressed as
$$T(p_V) = \langle\pi_1,\pi_3\rangle\;\;:\;\;T^2V = V^4 \longrightarrow V^2 = TV ,\;\;(x,t,u,v) \mapsto (x,u)\;.$$
\end{ExaSubSub}

\begin{RemSubSub}
It is important to note that a tangent category $\X$ is not assumed to have pullbacks.  Indeed, the category of smooth manifolds $\Mf$ \pref{exa:sman} does not have all pullbacks.  However $\Mf$ does have certain pullbacks, including pullbacks of \textit{submersions} along arbitrary smooth maps; a smooth map $f:X \rightarrow Y$ is called a submersion if its differential $T(f)$ is \textit{fibrewise surjective}, in the sense that for any $x \in X$ and any tangent vector $w$ at $f(x)$ on $Y$, there exists a tangent vector $v$ at $x$ on $X$ with $(T(f))(v) = w$.
\end{RemSubSub}

We shall now introduce some convenient terminology for certain classes of limits with which we shall work frequently in this paper:

\begin{DefSubSub}\label{def:tangent_limit}
Given a tangent category $\X$, a \textbf{tangential limit} in $\X$ is a limit in $\X$ that is preserved by $T^n:\X \rightarrow \X$ for each $n \in \NN$.  We shall employ similar terminology with respect to specific kinds of limits, thus defining notions of \textbf{tangential pullback}, \textbf{tangential fibre product}, and $m$-th \textbf{tangential fibre power} $(m \in \NN)$ as special cases of the notion of tangential limit.  For example, in any tangent category, the finite fibre powers of each projection $p_M$ are tangential.
\end{DefSubSub}

\subsection{Differential bundles}

Throughout the remainder of the paper, $\X$ will denote a given tangent category.

\begin{ParSubSub}\label{par:dbun}
Given an object $M$ of $\X$, a \textbf{differential bundle} \cite{CoCr:DBun} over $M$ is a commutative monoid $\underline{E} = (E,q,\sigma,\zeta)$ over $M$ \pref{par:add_bun} equipped with a morphism $\lambda:E \rightarrow TE$ (called the \textit{lift}) such that
\begin{enumerate}
\item for each $n \in \NN$, the $n$-th fibre power of $q$ is tangential \pref{def:tangent_limit};
\item $(\lambda,0_M)\;:\;\underline{E} \rightarrow T\underline{E}$ is a morphism of additive bundles;
\item $(\lambda,\zeta)\;:\;\underline{E} \rightarrow \underline{T}E$ is a morphism of additive bundles;
\item the following diagram is a tangential pullback \pref{def:tangent_limit}
$$
\xymatrix{
E \times_M E \ar[d]_{\text{proj}} \ar[r]^\mu & TE \ar[d]^{T(q)}\\
M \ar[r]_{0_M}                               & TM
}
$$
where $\text{proj} = \pi_1q = \pi_2q$ denotes the projection and $\mu$ is the composite
$$E \times_M E \xrightarrow{\langle\pi_1\lambda,\pi_20_E\rangle} TE \times_{TM} TE = T(E \times_M E) \xrightarrow{T(\sigma)} TE\;;$$
\item $\lambda\ell_E = \lambda T(\lambda)$.
\end{enumerate}
By \ref{par:add_bun}, a differential bundle $(E,q,\sigma,\zeta,\lambda)$ is equipped with a specified fibre power $q^{(n)}:\pover{E}{n}{M} \rightarrow M$ of $q$ for each $n \in \NN$.
\end{ParSubSub}

\begin{ExaSubSub}
For each object $M$ of $\X$, $\underline{T}M = (TM,p_M,0_M,+_M)$ \pref{par:notn_abs} is a differential bundle over $M$ when equipped with the vertical lift $\ell_M:TM \rightarrow T^2M$ \cite[Example 2.4]{CoCr:DBun}.  This differential bundle, which we shall denote also by
$$\underline{T}M = (TM,p_M,0_M,+_M,\ell_M),$$
deserves to be called the \textbf{tangent bundle} of $M$.  The axiom of \textit{universality of the vertical lift} \pref{par:tcats} requires precisely that $\underline{T}M$ satisfy axiom 4 of \ref{par:dbun}; see \cite[Def. 2.1]{CoCr:DBun} and the footnote there. 
\end{ExaSubSub}

\begin{ExaSubSub}\label{exa:vb}
Any smooth vector bundle $q:E \rightarrow M$ carries the structure of a differential bundle in the tangent category $\Mf$ of smooth manifolds; in fact, among differential bundles in $\Mf$, those that are smooth vector bundles in the traditional sense are precisely those whose fibres have constant dimension \cite[Example 2.4]{CoCr:DBun}.
\end{ExaSubSub}

\begin{ExaSubSub}\label{exa:te}
Given any differential bundle $\underline{E} = (E,q,\sigma,\zeta,\lambda)$ over $M$, there is an associated differential bundle
\begin{equation}\label{eq:te}T\underline{E} = (TE,T(q),T(\sigma),T(\zeta),T(\lambda)c_E)\end{equation}
over $TM$, whose lift morphism is the composite $TE \xrightarrow{T(\lambda)} T^2E \xrightarrow{c_E} T^2 E$ \cite[Lemma 2.5]{CoCr:DBun}.  Note that the notation \eqref{eq:te} extends our similar notation for additive bundles \eqref{eq:te_add}.
\end{ExaSubSub}

\begin{ParSubSub}\label{par:cat_dbuns}
Given differential bundles $\underline{E} = (E,q,\sigma,\zeta,\lambda)$ and $\underline{F} = (F,r,\sigma',\zeta',\lambda')$ over $M$ and $N$, respectively, a \textbf{linear morphism of differential bundles} $(g,f):\underline{E} \rightarrow \underline{F}$ consists of a pair of morphisms $g:E \rightarrow F$ and $f:M \rightarrow N$ in $\X$ such that $qf = gr$ and $\lambda T(g) = g\lambda'$. Remarkably, any linear morphism of differential bundles $(g,f):\underline{E} \rightarrow \underline{F}$ is automatically a morphism of additive bundles \cite[Prop. 2.16]{CoCr:DBun}, i.e., it follows that $(g,f)$ preserves the commutative monoid structures.  With these morphisms, differential bundles over various bases are the objects of a category $\DBun$.
\end{ParSubSub}

If $\underline{E}$ and $\underline{F}$ are both differential bundles over $M$, then a \textbf{linear morphism of differential bundles over $M$}, written $g:\underline{E} \rightarrow \underline{F}$, is given by a morphism $g:E \rightarrow  F$ in $\X$ such that $(g,1_M):\underline{E} \rightarrow \underline{F}$ is a linear morphism of differential bundles.  With these morphisms, differential bundles over $M$ are the objects a category $\DBun_M$.

\begin{ParSubSub}\label{eq:ci}
We shall say that differential bundles $\underline{E}$ and $\underline{F}$ over $M$ are \textbf{concretely isomorphic} if there is an isomorphism $\underline{E} \rightarrow \underline{F}$ in $\DBun_M$ that is sent by the forgetful functor $\DBun_M \rightarrow \X \slash M$ to an identity morphism.  Hence $\underline{E}$ and $\underline{F}$ are concretely isomorphic iff their underlying objects $(E,q)$ and $(F,r)$ of $\X \slash M$ are identical and the identity morphism on $E = F$ is an isomorphism $\underline{E} \cong \underline{F}$ in $\DBun_M$.   If this is the case, then $\underline{E}$ and $\underline{F}$ are identical iff they have the same specified $n$-th fibre power of $q = r$ for each $n \in \NN$ (cf. \ref{par:dbun}).  Note that if $\underline{E} = (E,q,\sigma,\zeta,\lambda)$ and $\underline{F} = (F,r,\sigma',\zeta',\lambda')$ are concretely isomorphic, then $E = F$, $q = r$, $\zeta = \zeta'$, and $\lambda = \lambda'$, so $\underline{E}$ and $\underline{F}$ differ only insofar as they may have different specified fibre powers of $q = r$ and addition morphisms $\sigma$, $\sigma'$ that are merely isomorphic (as objects of $\X \slash E$).
\end{ParSubSub}

It is shown in \cite[Lemma 2.7]{CoCr:DBun} that one can `pull back' a differential bundle along a morphism in $\X$, provided that certain assumptions are satisfied.  The precise assumptions needed are expressed within the discussion that precedes Lemma 2.7 of \cite{CoCr:DBun}, and so the result can be stated explicitly as follows:

\begin{PropSubSub}[{\cite[Lemma 2.7]{CoCr:DBun}}]\label{thm:pb_dbun}
Let $\underline{E} = (E,q,\sigma,\zeta,\lambda)$ be a differential bundle over $M$, and for each $n \in \NN$, let $q^{(n)}$ denote the $n$-th fibre power of $q$.  Suppose that $f:N \rightarrow M$ is a morphism in $\X$ such that the tangential pullback \pref{def:tangent_limit} of $q^{(n)}$ along $f$ exists in $\X$ for each $n$.  Then the pullback $f^*(q):N \times_M E \rightarrow N$ of $q$ along $f$ carries the structure of a differential bundle $f^*(\underline{E})$ over $N$.  Further, 
\begin{equation}\label{eq:cart_arr}(f',f):f^*(\underline{E}) \rightarrow \underline{E}\end{equation}
is a linear morphism of differential bundles, where $f':N \times_M E \rightarrow E$ is the pullback of $f$ along $q$.  Moreover, the morphism \eqref{eq:cart_arr} in $\DBun$ is a cartesian arrow, over $f$, with respect to functor $\mathsf{cod}:\DBun \rightarrow \X$ that sends each differential bundle $(F,r,\sigma',\zeta',\lambda')$ to the codomain of $r$.
\end{PropSubSub}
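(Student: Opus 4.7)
The plan is to construct the differential bundle structure on $f^*(q) : N \times_M E \to N$ piece by piece, transporting each operation from $\underline{E}$ via the universal property of the pullback and the tangential hypothesis, and then to verify the axioms of \ref{par:dbun} together with the cartesian property. First I would note that the tangential hypothesis with $n=1$ makes the comparison $\langle T(f^*(q)),T(f')\rangle : T(N \times_M E) \to TN \times_{TM} TE$ an isomorphism, and more generally, for each $n$, that the pullback $N \times_M \pover{E}{n}{M}$ exists, is isomorphic to the $n$-th fibre power of $f^*(q)$, and is preserved by every $T^k$. This immediately yields axiom 1 of \ref{par:dbun}.

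Next I would define $\zeta'' : N \to N \times_M E$ as the map induced by the cone $(1_N, f\zeta)$, which commutes because $\zeta q = 1_M$. I would define $\sigma''$ on $(N \times_M E) \times_N (N \times_M E) \cong N \times_M (E \times_M E)$ as the map induced by $(\pi_1 f^*(q), (\pi_1 f' \times_N \pi_2 f')\sigma)$; the commutative monoid axioms transfer from $\underline{E}$ via the joint monicity of $(f^*(q), f')$. For the lift I would exploit the isomorphism $T(N \times_M E) \cong TN \times_{TM} TE$ to define $\lambda''$ as the map induced by $(f^*(q) 0_N,\; f' \lambda)$, noting that the required compatibility $f^*(q) 0_N T(f) = f' \lambda T(q)$ is exactly $f^*(q) f 0_M = f' q 0_M$, which holds since $\lambda T(q) = q 0_M$ by axiom 2 for $\underline{E}$.

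The main obstacle will be verifying axiom 4, that the universal-lift square for $f^*(\underline{E})$ is a tangential pullback. The strategy is to exhibit this square as a pasting of the corresponding universal-lift square for $\underline{E}$ (a tangential pullback by hypothesis on $\underline{E}$) with pullback squares obtained by pulling back along $f$, using the tangential hypothesis to guarantee that each constituent pullback exists and is preserved by every $T^k$; the pasting lemma for pullbacks then delivers the result, and the same lemma simultaneously propagates the tangential property. Axioms 2, 3, and 5 reduce, after composing with the jointly monic pair $(T(f^*(q)), T(f'))$, to the corresponding axioms for $\underline{E}$ together with naturality of $0$ and $\ell$, and so are routine. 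Finally, that $(f',f) : f^*(\underline{E}) \to \underline{E}$ is cartesian over $f$ with respect to $\mathsf{cod} : \DBun \to \X$ follows by: given any linear morphism $(g,h):\underline{G} \to \underline{E}$ with $h = h' f$, the underlying pullback produces a unique $g'$ over $h'$ with $g' f' = g$, and linearity of $(g', h')$ is checked by composing $\lambda_G T(g')$ and $g' \lambda''$ with $T(f^*(q))$ and $T(f')$ and invoking linearity of $(g,h)$ together with the definition of $\lambda''$.
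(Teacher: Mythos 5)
The paper does not actually prove this proposition: it is imported verbatim from Cockett and Cruttwell's \cite[Lemma 2.7]{CoCr:DBun} (the surrounding text only explains how the hypotheses there translate into the statement given here), so there is no in-paper argument to compare yours against. Your reconstruction is the expected one and I see no gap in it. The componentwise definitions of $\zeta''$, $\sigma''$, $\lambda''$ are correct, as is the compatibility computation $f^*(q)0_NT(f)=f'\lambda T(q)$ via naturality of $0$ and axiom 2 for $\underline{E}$; and your treatment of axiom 4 is the right one, provided you invoke the \emph{cancellation} form of the pasting lemma (right-hand square and outer rectangle pullbacks imply left-hand square pullback) and repeat the argument after applying each $T^k$. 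It is worth noting explicitly that the outer rectangle there is assembled from the $n=2$ instance of the tangential-pullback hypothesis (the pullback of $q^{(2)}$ along $f$) pasted with the universal-lift square for $\underline{E}$; this is precisely why the hypothesis must be imposed for all $n$ and not just $n=1$. Two small points of precision: for axiom 5 both sides of $\lambda''\ell_{N\times_ME}=\lambda''T(\lambda'')$ land in $T^2(N\times_ME)$, so the jointly monic family you compose with should be $\bigl(T^2(f^*(q)),T^2(f')\bigr)$ rather than $\bigl(T(f^*(q)),T(f')\bigr)$; and in the cartesianness argument you should recall from \ref{par:cat_dbuns} that linear morphisms are automatically additive, so verifying the single lift equation for $(g',h')$ by composing with the two $T$-projections, as you do, genuinely suffices.
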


\begin{ParSubSub}\label{par:pb_dbun}
If the assumptions of \ref{thm:pb_dbun} are satisfied, then we say that the differential bundle $\underline{E}$ \textbf{has a pullback along $f$}, or that \textbf{the pullback of $\underline{E}$ along $f$ exists}.  Let us assume for the moment that this is the case.  We then call $f^*(\underline{E})$ the \textbf{pullback of $\underline{E}$ along $f$}.  More precisely, a \textit{pullback of $\underline{E}$ along $f$} is a differential bundle $\underline{F}$ equipped with a cartesian arrow $\alpha:\underline{F} \rightarrow \underline{E}$ over $f$ with respect to $\mathsf{cod}:\DBun \rightarrow \X$.  Any two such differential bundles are isomorphic, so we call each of them \textit{the pullback}.  Moreover, given any two pullbacks $(\underline{F},\alpha)$ and $(\underline{G},\beta)$ of $\underline{E}$ along $f:N \rightarrow M$, there is a unique isomorphism $\underline{F} \cong \underline{G}$ in $\DBun_N$ that commutes with the cartesian arrows $\alpha,\beta$ in $\DBun$.  Hence, in view of the construction of $f^*(\underline{E})$ in \ref{thm:pb_dbun} we find that for any given pullback $(\underline{F},\alpha)$ of $\underline{E}$ along $f$, if we write $\underline{F} = (F,r,\sigma',\zeta',\lambda')$ and $\alpha = (\alpha_1,\alpha_2)$, then the square
$$
\xymatrix{
F \ar[d]_{r} \ar[r]^{\alpha_1} & E \ar[d]^q\\
N \ar[r]_{\alpha_2\:=\:f}      & M
}
$$
is a pullback in $\X$, which we call \textbf{the pullback in $\X$ underlying $(\underline{F},\alpha)$}, or simply \textit{the pullback in $\X$ underlying $\alpha$}.  Still assuming that the pullback $f^*(\underline{E})$ exists, if $\alpha:\underline{F} \rightarrow \underline{E}$ is any given morphism over $f$ in $\DBun$, then $\alpha$ is cartesian if and only if the commutative square underlying $\alpha$ is a pullback square in $\X$; indeed, any such morphism $\alpha$ factors through the pullback $f^*(\underline{E}) \rightarrow \underline{E}$ by way of unique morphism $\phi:\underline{F} \rightarrow f^*(\underline{E})$ in $\DBun_N$, and if the square underlying $\alpha$ is a pullback it follows that $\phi$ is an isomorphism in $\X \slash N$ and hence in $\DBun_N$.  Note also that if two pullbacks $(\underline{F},\alpha)$ and $(\underline{G},\beta)$ of $\underline{E}$ along $f$ have the same underlying pullback in $\X$, then $\underline{F}$ and $\underline{G}$ are \textit{concretely isomorphic} as differential bundles over $N$ \pref{eq:ci}.
\end{ParSubSub}

\section{Review of connections in tangent categories}

The paper \cite{CoCr:Conn} defines a notion of connection on a differential bundle  $\underline{E} = (E,q,\sigma,\zeta,\lambda)$ over an object $M$, where the authors use a definition of the notion of differential bundle that is slightly different from the original definition \cite[Def. 2.3]{CoCr:DBun} in that it incorporates the following additional condition \cite[Def. 2.2]{CoCr:Conn}:

\begin{ParSub}[\textbf{Basic Condition}]\label{par:std_assn}
\textit{For all natural numbers $m,n \in \NN$, there is an associated tangential fibre product
$$\pover{E}{m}{M} \times_M\: T_nM$$
of $q^{(m)}:\pover{E}{m}{M} \rightarrow M$ and $p^{(n)}_M:T_nM \rightarrow M$ in $\X$ \pref{def:tangent_limit}.}
\end{ParSub}

\noindent Rather than building this condition into the definition of differential bundle, we will retain the original definition \cite[Def. 2.3]{CoCr:DBun}, which we reviewed in \ref{par:dbun}.

\begin{ParSub}\label{par:exa_std_assn}
It is noted in \cite[Example 2.3]{CoCr:Conn} that Basic Condition \ref{par:std_assn} is satisfied by wide classes of examples of differential bundles, including the following:
\begin{enumerate}
\item The tangent bundle $\underline{T}M$ always satisfies Basic Condition \ref{par:std_assn}.
\item If $\X = \Mf$ is the category of smooth manifolds and $\underline{E} = (E,q,\sigma,\zeta,\lambda)$ is a vector bundle, considered as a differential bundle in $\X$ \pref{exa:vb}, then $\underline{E}$ satisfies Basic Condition \ref{par:std_assn}.
\end{enumerate}
\end{ParSub}

Let us fix a differential bundle $\underline{E} = (E,q,\sigma,\zeta,\lambda)$, over $M$, satisfying Basic Condition \ref{par:std_assn}.  Then, in view of \ref{par:pb_dbun}, the following pullbacks of differential bundles exist: (1) the pullback $p_M^*(\underline{E})$ of $\underline{E}$ along $p_M:TM \rightarrow M$, and (2) the pullback $q^*(\underline{T}M)$ of $\underline{T}M$ along $q:E \rightarrow M$.  Therefore the object $E \times_M TM$ of $\X$ carries \textit{two} differential bundle structures, namely $p_M^*(\underline{E})$ and $q^*(\underline{T}M)$, over $TM$ and over $E$, respectively.

\begin{DefSub}[Cockett and Cruttwell \cite{CoCr:Conn}]\label{def:cx}
\emptybox
\begin{enumerate}
\item A \textbf{vertical connection} on $\underline{E}$ is a morphism $K:TE \rightarrow E$ in $\X$ such that $K$ is a retraction of $\lambda:E \rightarrow TE$ and satisfies the following axioms:
\begin{description}
\item[(C.1)] $(K,p_M):T\underline{E} \rightarrow \underline{E}$ is a linear morphism of differential bundles;
\item[(C.2)] $(K,q):\underline{T}E \rightarrow \underline{E}$ is a linear morphism of differential bundles.
\end{description}
\item A \textbf{horizontal connection} on $\underline{E}$ is a morphism\footnote{Whereas in \cite{CoCr:Conn} the domain of $H$ is written as $TM \times_M E$, we write it herein as $E \times_M TM$ for consistency with certain equivalent formulations developed in this paper.} $H:E \times_M TM \rightarrow TE$ in $\X$ such that $H$ is a section of $U = \langle p_E, T(q) \rangle$ and satisfies the following axioms:
\begin{description}
\item[(C.3)] $(H,1_E):q^*(\underline{T}M) \rightarrow \underline{T}E$ is a linear morphism of differential bundles;
\item[(C.4)] $(H,1_{TM}):p_M^*(\underline{E}) \rightarrow T\underline{E}$ is a linear morphism of differential bundles.
\end{description}
\item A \textbf{connection} on $\underline{E}$ is a pair $(K,H)$ consisting of a vertical connection $K$ and a horizontal connection $H$, on $\underline{E}$, such that the following axioms hold:
\begin{description}
\item[(C.5)] $HK = \pi_1 q \zeta\;:\;E \times_M TM \rightarrow E$;
\item[(C.6)] $\langle K,p_E\rangle\mu \;+\; UH \;=\; 1_{TE}$
\end{description}
where $\mu:E \times_M E \rightarrow TE$ is as defined in \ref{par:dbun} and we write $+$ in infix notation to denote the binary operation defined as follows:  Apply the finite-product-preserving functor $(\X \slash E)((TE,p_E),-):\X \slash E \rightarrow \Set$ to the commutative monoid $(TE,p_E,+_E,0_E)$ in order to obtain a commutative monoid structure on the hom-set $(\X \slash E)((TE,p_E),(TE,p_E))$.
\item A connection $(K,H)$ on $\underline{E}$ is said to be an \textbf{affine connection} on $M$ if $\underline{E} = \underline{T}M$.
\end{enumerate}
\end{DefSub}

\begin{ParSub}\label{par:uniq_hc_vc}
Given a vertical connection $K$ on $\underline{E}$, there is at most one horizontal connection $H$ such that $(K,H)$ is a connection on $\underline{E}$.  Indeed, this is proved in \cite[Prop. 5.10]{CoCr:Conn}, where it is also proved that for a given horizontal connection $H$ on $\underline{E}$ there is at most one vertical connection $K$ such that $(K,H)$ is a connection on $\underline{E}$.
\end{ParSub}

\begin{ThmSub}[{\cite[Prop. 5.12, Prop. 5.13]{CoCr:Conn}}]\label{thm:cocr_vhthm}
\emptybox
\begin{enumerate}
\item Let $K$ be a vertical connection on $\underline{E}$.  Suppose that $\X$ has negatives \pref{par:tcats}, and suppose that $\underline{E}$ carries at least one horizontal connection.  Then there is an associated horizontal connection $H$ on $\underline{E}$ such that $(K,H)$ is a connection.
\item  Let $H$ be a horizontal connection on $\underline{E}$, and suppose that $\X$ has negatives.  Then there is an associated vertical connection $K$ on $\underline{E}$ such that $(K,H)$ is a connection.
\end{enumerate}
\end{ThmSub}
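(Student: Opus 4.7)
The plan is to use axiom 4 of \ref{par:dbun}, which exhibits $\mu: E \times_M E \to TE$ as the tangential pullback of $0_M$ along $T(q)$, together with the abelian group structure on the hom-monoid $(\X\slash E)((TE,p_E),(TE,p_E))$ guaranteed by the negatives hypothesis.  In this frame, axiom (C.6) reads as a splitting of $1_{TE}$ into a ``vertical part'' $\langle K,p_E\rangle\mu$ and a ``horizontal part'' $UH$, and each direction of the theorem amounts to recovering one part from the other.

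For part (2), given $H$, I would set $J := 1_{TE} - UH$ in the abelian group above.  Since $H$ is a section of $U$, postcomposing $HU=1_{E\times_M TM}$ with the second projection yields $HT(q) = \pi_2$, whence $UHT(q) = U\pi_2 = T(q)$; postcomposition with $T(q)$ is a group homomorphism by naturality of $+:T_2\to T$ and the negatives hypothesis, so $JT(q) = qp_E 0_M$, the relevant group zero.  Axiom 4 of \ref{par:dbun} then produces a unique $\tilde K:TE \to E \times_M E$ with $\tilde K\mu = J$ and $\tilde K\,\text{proj} = p_Eq$.  The decisive calculation is $\mu p_E = \pi_2$: unwinding $\mu = \langle\pi_1\lambda,\pi_2 0_E\rangle T(\sigma)$, pushing $p_E$ past $T(\sigma)$ by naturality of $p$, and then using $\lambda p_E = q\zeta$ (axiom 3 of \ref{par:dbun}), $0_E p_E = 1_E$, and the monoid unit law $\langle q\zeta,1_E\rangle\sigma = 1_E$ reduces everything to $\pi_2$.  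This forces $\tilde K\pi_2 = \tilde K\mu p_E = Jp_E = p_E$, so $\tilde K = \langle K,p_E\rangle$ with $K := \tilde K\pi_1$, and (C.6) holds by construction.  The remaining conditions --- $K$ a retraction of $\lambda$, (C.1), (C.2), (C.5) --- I would verify by applying appropriate structure maps to the defining identity $\tilde K\mu = J$ and exploiting the monicity of $\mu$.

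For part (1), given $K$ and an auxiliary horizontal connection $H_0$, I would first apply part (2) to $H_0$ to produce a compatible vertical connection $K_0$ with $(K_0,H_0)$ a connection, and then define $H := H_0 + c$ with correction $c := \langle H_0(K_0-K),\,\pi_1\rangle\mu$; here $\pi_1:E \times_M TM \to E$ is the first projection, the subtraction $K_0 - K$ is formed in the group of morphisms $TE \to E$ over $M$ (a group since negatives transfer from $\underline{T}E$ to $\underline{E}$), and the outer sum is in the monoid of morphisms $E \times_M TM \to TE$ over $\pi_1$.  The section property $HU = 1$ reduces to $cU = 0$, which follows from $\mu U = \langle \pi_2,\,\text{proj}\,0_M\rangle$ (first component $\mu p_E = \pi_2$ as above, second component $\mu T(q) = \text{proj}\,0_M$ directly from the pullback square).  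For (C.6), the essential inputs are the fibrewise additivity of $\mu$ in its first argument and the lemma $\mu K = \pi_1$ valid for any vertical connection; together these give $Uc = \langle K_0,p_E\rangle\mu - \langle K,p_E\rangle\mu$, which combined with the identity for $(K_0,H_0)$ yields (C.6).  Axioms (C.3), (C.4), (C.5) I would verify from the corresponding properties of $H_0$ together with the linearity of $K$ and $K_0$.

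The principal obstacle I anticipate is the bookkeeping in part (1): confirming that the correction term lives in the right monoid, that $cU$ vanishes so the section property is preserved, and above all that the fibrewise linearity axioms (C.3) and (C.4) persist after the modification.  Part (2) is essentially forced by the pullback universal property, with the only delicate step being the identity $\mu p_E = \pi_2$.
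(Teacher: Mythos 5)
This theorem is quoted verbatim from Cockett and Cruttwell (\cite[Props.\ 5.12, 5.13]{CoCr:Conn}); the paper gives no proof of its own, so your attempt can only be measured against the original argument, which it essentially reconstructs. Your part (2) is the right construction and its key steps check out: since $H$ is a section of $U=\langle p_E,T(q)\rangle$ one gets $UHT(q)=T(q)$, so $J=1_{TE}-UH$ satisfies $JT(q)=p_Eq0_M$ (note the order: $p_Eq0_M$, not $qp_E0_M$) and factors through the tangential pullback of \ref{par:dbun}(4) as $J=\tilde{K}\mu$; your identity $\mu p_E=\pi_2$ (from $\lambda p_E=q\zeta$, naturality of $p$, and the unit law for $\sigma$) correctly forces the second component of $\tilde{K}$ to be $p_E$, yielding (C.6). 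Your part (1) bookkeeping is also sound: $\mu K=\pi_1$ does hold for any vertical connection (via $\lambda K=1_E$, $0_EK=q\zeta$, and additivity of $K$ over $T(q)$), $cU$ is the zero section over $\pi_1$, and $UH_0(K_0-K)=K_0-K$ because $\langle K_0,p_E\rangle\mu(K_0-K)$ is a zero morphism. The claim that negatives transfer from $\underline{T}E$ to $\underline{E}$ is true but is itself a lemma of \cite{CoCr:DBun} that needs citing or proving, since $K_0-K$ depends on it.

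The genuine gap is that the lift-compatibility axioms --- (C.1)/(C.2) in part (2) and (C.3)/(C.4) in part (1) --- are exactly where the bulk of the original proofs lives, and your plan for them is a placeholder. For (C.4) in particular, you form $H=H_0+c$ using the addition $+_E$ of $\underline{T}E$ (fibrewise over $E$ via $p_E$), whereas linearity of $(H,1_{TM}):p_M^*(\underline{E})\to T\underline{E}$ is measured against the \emph{other} structure on $TE$, namely $T\underline{E}$ over $TM$ with addition $T(\sigma)$ and lift $T(\lambda)c_E$; so ``a sum of linear maps is linear'' cannot be invoked directly, and one needs the interchange between the two bundle structures on $TE$ together with a proof that the correction $c=\langle H_0(K_0-K),\pi_1\rangle\mu$ is itself linear in both senses (resting ultimately on \ref{par:dbun}(5) and the coherences for $\ell$ and $c$). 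Likewise, in part (2), deducing (C.1) and (C.2) from $\tilde{K}\mu=J$ requires applying $T$ to that equation and manipulating $\ell_E$, $c_E$, and $T(\mu)$; monicity of $\mu$ alone will not suffice, since the equations to be verified live in $T^2E$. None of this is unfixable --- Cockett and Cruttwell carry these verifications out --- but as written they are the substance of the propositions and are missing from your argument.
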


\begin{ExaSub}[\textbf{The canonical affine connection on $\RR^n$}]\label{exa:can_aff_cx}
Continuing Example \ref{exa:sman}, let $\X = \Mf$ be the category of smooth manifolds, and let $V = \RR^n$, considered as an object of $\Mf$.  In view of \ref{thm:cocr_vhthm}, one can consult \cite[Example 3.6]{CoCr:Conn} for a treatment of the various affine connections on $V$; we now consider one particularly simple example.  There is a connection $(K,H)$ on $\underline{T}V$ in which\footnote{\label{fn:can_cx}Recall that we write the base-point $x$ in the leftmost coordinate of an element $(x,t,u,v)$ of $T^2V = V^4$ \pref{exa:sman}, thus writing the components of $(x,t,u,v)$ in an order opposite to that employed in \cite{CoCr:Conn}; cf. Footnote \ref{fn:l_to_r}.}
$$K = \langle \pi_1,\pi_4\rangle\;:\;T^2V = V^4 \longrightarrow V^2 = TV,\;\;\;\;(x,t,u,v) \mapsto (x,v).$$
The domain of the map $H$ is the fibre product $TV \times_V TV = V^2 \times_V V^2$ of two instances of $p_V = \pi_1:V^2 \rightarrow V$, so we may take $TV \times_V TV = V^3$ with associated projection $\pi_1:V^3 \rightarrow V$.  With this convention, $H$ is given as follows:
$$H\;:\;TV \times_V TV = V^3 \longrightarrow V^4 = T^2V,\;\;\;\;(x,t,u) \mapsto (x,t,u,0).$$
\end{ExaSub}

\section{Biproducts of differential bundles: Whitney sums}

In the sections that follow, we shall establish a useful equivalent formulation of connections in tangent categories.  As a key ingredient in this study of connections, we now examine closely the formation of products and biproducts of differential bundles over a fixed base.  The paper \cite{CoCr:DBun} constructs finite products of differential bundles over a fixed base object, under certain assumptions on the given tangent category $\X$ and the bundles involved.  In the present section, we instead treat the question of existence of the product of a given finite family of differential bundles over an object $M$ of an arbitrary tangent category.  But first let us begin with the following observation:

\begin{PropSub}\label{thm:dbunm_add}
The category $\DBun_M$ of differential bundles over $M$ is an additive category.  Hence finite products in $\DBun_M$ are biproducts \pref{par:add_cat}.
\end{PropSub}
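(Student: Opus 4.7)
The plan is to exhibit the commutative monoid structure on each hom-set $\DBun_M(\underline{E},\underline{F})$ by pointwise addition using the commutative monoid structure of $\underline{F}$ in $\X\slash M$, and then verify (i) that it is well-defined on linear morphisms and (ii) that composition is bilinear. The second sentence of the proposition will then be immediate from \ref{par:add_cat}.

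Since $\underline{F} = (F,r,\sigma',\zeta',\lambda')$ is, in particular, a commutative monoid in $\X \slash M$, the standard representability argument endows each hom-set $(\X\slash M)(X,(F,r))$ with a commutative monoid structure in which $g + h := \langle g,h\rangle\sigma'$ and the zero morphism is $q_X\zeta'$, where $q_X:X \rightarrow M$ is the structure map. Precomposition by any morphism in $\X\slash M$ is a homomorphism of commutative monoids, and postcomposition by a morphism $k:\underline{F} \rightarrow \underline{G}$ that itself preserves $\sigma$ and $\zeta$ is likewise a homomorphism.

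The crux is to check that this structure restricts to the subset $\DBun_M(\underline{E},\underline{F}) \subseteq (\X\slash M)(E,F)$, i.e.\ that the sum and zero of linear morphisms are themselves linear. For the sum, given linear $g,h:\underline{E}\rightarrow\underline{F}$, we wish to verify $\lambda T(g+h) = (g+h)\lambda'$. Using that $T$ preserves finite fibre powers of $r$ (axiom 1 of \ref{par:dbun}), we can rewrite $T(g+h) = \langle T(g),T(h)\rangle T(\sigma')$, so by linearity of $g$ and $h$ the left-hand side becomes $\langle g\lambda',h\lambda'\rangle T(\sigma')$. For the right-hand side, axiom 2 of \ref{par:dbun} says that $(\lambda',0_M):\underline{F} \rightarrow T\underline{F}$ is a morphism of additive bundles, which by \ref{par:cat_dbuns} preserves addition, giving $\sigma'\lambda' = (\lambda'\times_{TM}\lambda')T(\sigma')$; hence the right-hand side also equals $\langle g\lambda',h\lambda'\rangle T(\sigma')$. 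For the zero, one similarly combines the equality $\lambda T(q) = q\cdot 0_M$ (from the same axiom), the naturality of $0:1_\X \Rightarrow T$, and the zero-preservation clause of that morphism of additive bundles to obtain $\lambda T(q\zeta') = q\zeta'\lambda'$.

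Bilinearity of composition is then immediate: any linear morphism $k:\underline{F}\rightarrow\underline{G}$ is automatically a morphism of additive bundles (by the result recalled in \ref{par:cat_dbuns}), so $\sigma'k = (k\times_M k)\sigma''$ yields $(g+h)k = gk + hk$, while the other distributive law $k(g+h) = kg + kh$ follows formally from the pointwise definition. Thus $\DBun_M$ is an additive category in the sense of \ref{par:add_cat}, and the final clause of the proposition follows directly from the general fact, recorded there, that finite products in an additive category are automatically biproducts. The only substantive step is the closure of linear morphisms under the pointwise sum; everything else is formal consequence of the differential-bundle axioms and the commutative-monoid structure of $\underline{F}$.
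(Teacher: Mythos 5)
Your proof is correct and follows essentially the same route as the paper's: both endow the hom-sets with the pointwise monoid structure coming from the commutative monoid $\underline{F}$ in $\X\slash M$ and reduce the substance to showing that sums and zeros of linear morphisms are again linear, using axiom 2 of \ref{par:dbun} together with the preservation of the relevant fibre products by $T$. The only (cosmetic) difference is that you verify bilinearity of composition directly, whereas the paper inherits it from the faithful forgetful functor into the additive category $\CMon(\X\slash M)$.
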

\begin{proof}
By \ref{par:cat_dbuns}, there is a faithful `forgetful' functor $\DBun_M \rightarrow \A$, where $\A = \CMon(\X \slash M)$.  Hence, since $\A$ is an additive category \pref{par:cmonc} it suffices to show that for any pair of differential bundles $\underline{E} = (E,q,\sigma,\zeta,\lambda)$, $\underline{F} = (F,r,\sigma',\zeta',\lambda')$, the subset
$$\DBun_M(\underline{E},\underline{F}) \hookrightarrow \A(\underline{E},\underline{F})$$
is a submonoid, where we do not distinguish notationally between a differential bundle and its underlying commutative monoid.  The zero element of the commutative monoid $\A(\underline{E},\underline{F})$ is the composite $q\zeta': E \rightarrow F$, which is a morphism of differential bundles over $M$ since
$$\lambda T(q\zeta') = \lambda T(q)T(\zeta') = q0_MT(\zeta') = \zeta'\lambda'\;,$$
using \ref{par:dbun}(2).  Also, given linear morphisms of differential bundles $f,g:\underline{E} \rightarrow \underline{F}$, their sum $f + g$ in $\A(\underline{E},\underline{F})$ is the composite $E \xrightarrow{\langle f,g\rangle} F \times_M F \xrightarrow{\sigma'} F$, which we claim is a morphism of differential bundles over $M$.  Indeed, we have a diagram
\begin{equation}\label{eq:sum_mor_dbm}
\xymatrix{
E \ar[d]_\lambda \ar[r]^(.4){\langle f,g\rangle} & F \times_M F \ar[d]_{\lambda' \times \lambda'} \ar[r]^(.6){\sigma'} & F \ar[d]^{\lambda'}\\
TE \ar[r]_(.35){T(\langle f,g\rangle)} & T(F \times_M F) \ar[r]_(.65){T(\sigma')} & TF
}
\end{equation}
recalling that $T(F \times_M F)$ is a fibre product $TF \times_{TM} TF$ by \ref{par:dbun}(1), so that $T(\langle f,g\rangle)$ is the induced morphism $\langle T(f),T(g)\rangle:TE \rightarrow TF \times_{TM} TF$.  Hence the leftmost square commutes since $f$ and $g$ are morphisms of differential bundles over $M$, and the rightmost square commutes by \ref{par:dbun}(2).
\end{proof}

\begin{PropSub}\label{thm:t_on_db}
There is an additive functor
$$\DBun_M \rightarrow \DBun_{TM},\;\;\;\;\;\underline{E} \mapsto T\underline{E}$$
that sends each differential bundle $\underline{E}$ over $M$ to the differential bundle $T\underline{E}$ over $TM$ defined in \ref{exa:te} and sends each morphism $f$ to $T(f)$.
\end{PropSub}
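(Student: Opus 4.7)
The plan is to verify, in order, the four ingredients implicit in the proposition: (a) that $T\underline{E}$ is indeed a differential bundle over $TM$ for each $\underline{E}$; (b) that $T(f)$ is a linear morphism $T\underline{E} \to T\underline{F}$ over $TM$ whenever $f$ is a linear morphism $\underline{E} \to \underline{F}$ over $M$; (c) that the assignment respects identities and composition; and (d) that the induced mapping on hom-monoids is a homomorphism of commutative monoids. Claim (a) is already recorded in \ref{exa:te}, and (c) will be immediate from functoriality of $T : \X \to \X$.

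For (b), suppose $f : \underline{E} \to \underline{F}$ is a linear morphism over $M$, so that $fr = q$ and $f\lambda' = \lambda T(f)$. Applying $T$ yields $T(f)T(r) = T(q)$ and $T(f)T(\lambda') = T(\lambda)T^2(f)$. The first identity says $T(f)$ is a morphism over $TM$. To verify linearity of $T(f)$ with respect to the lifts $T(\lambda)c_E$ and $T(\lambda')c_F$ on $T\underline{E}$ and $T\underline{F}$ \pref{exa:te}, I would post-compose the second identity by $c_F$ on the right and apply naturality of $c$ in the form $T^2(f)c_F = c_E T^2(f)$, obtaining
$$T(f)\bigl(T(\lambda')c_F\bigr) = T(\lambda)T^2(f)c_F = T(\lambda)c_E T^2(f) = \bigl(T(\lambda)c_E\bigr)T^2(f),$$
which is precisely the required linearity condition.

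For (d), the zero morphism of $\DBun_M(\underline{E},\underline{F})$ is $q\zeta'$, and its image $T(q\zeta') = T(q)T(\zeta')$ is by definition the zero morphism of $\DBun_{TM}(T\underline{E},T\underline{F})$, so $T$ sends zero to zero. For the addition, recall from the proof of \ref{thm:dbunm_add} that the sum of linear morphisms $f, g : \underline{E} \to \underline{F}$ over $M$ is the composite $\langle f,g\rangle \sigma' : E \to F \times_M F \to F$. Now axiom \ref{par:dbun}(1) guarantees that the second fibre power of $r$ is tangential, so $T$ preserves this fibre product; hence $T(F \times_M F)$ together with $T(\pi_1), T(\pi_2)$ is a fibre product of $T(r)$ with itself, and by uniqueness of the induced pairing we get $T(\langle f,g\rangle) = \langle T(f), T(g)\rangle$. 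Consequently
$$T(\langle f,g\rangle\sigma') = \langle T(f), T(g)\rangle T(\sigma') = T(f) + T(g)$$
in $\DBun_{TM}(T\underline{E},T\underline{F})$, as required.

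The only subtle point is the identification of $T(F \times_M F)$ with the corresponding fibre product over $TM$, which is exactly what axiom \ref{par:dbun}(1) was built to guarantee; the remainder of the argument is routine manipulation using functoriality of $T$ and naturality of $c$, so there is no serious obstacle.
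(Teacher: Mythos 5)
Your proposal is correct and follows essentially the same route as the paper's proof: linearity of $T(f)$ is obtained by applying $T$ to the linearity equation for $f$ and composing with the naturality square for $c$ (your displayed chain of equalities is exactly the paper's commutative diagram written equationally), and additivity is obtained by identifying $T(\langle f,g\rangle)$ with $\langle T(f),T(g)\rangle$ via the tangential fibre power of $r$ guaranteed by \ref{par:dbun}(1), together with the observation that $T$ sends the zero morphism $q\zeta'$ to $T(q)T(\zeta')$. No gaps.
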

\begin{proof}
Given $f:\underline{E} \rightarrow \underline{F}$ in $\DBun_M$, where $\underline{E} = (E,q,\sigma,\zeta,\lambda)$ and $\underline{F} = (F,r,\sigma',\zeta',\lambda')$, we deduce that $T(f):T\underline{E} \rightarrow T\underline{F}$ is a morphism in $\DBun_{TM}$ since $T(f)T(r) = T(fr) = T(q)$ and the diagram
$$
\xymatrix{
TE \ar[d]_{T(f)} \ar[r]^{T(\lambda)} & T^2 E \ar[d]_{T^2(f)} \ar[r]^{c_E} & T^2 E \ar[d]^{T^2(f)}\\
TF \ar[r]_{T(\lambda')} & T^2F \ar[r]_{c_F} & T^2 F
}
$$
commutes, using the naturality of $c$ and the fact that $f$ is a morphism of differential bundles over $M$.  Given a pair of morphisms $f,g:\underline{E} \rightarrow \underline{F}$ in $\DBun_M$, their sum $f + g$ is the top row in the diagram \eqref{eq:sum_mor_dbm}, so $T(f + g)$ is the bottom row in \eqref{eq:sum_mor_dbm}.  But we noted in the proof of the preceding Proposition that the morphism $T(\langle f,g \rangle)$ appearing in that same diagram \eqref{eq:sum_mor_dbm} can be described equally as $\langle T(f),T(g)\rangle:TE \rightarrow TF \times_{TM} TF$, so the bottom row in \eqref{eq:sum_mor_dbm} is precisely the sum $T(f) + T(g)$ in $\DBun_{TM}(T\underline{E},T\underline{F})$.  Also, the zero element of $\DBun_M(\underline{E},\underline{F})$ is the composite $q\zeta':E \rightarrow F$, which is sent by $T$ to the zero element $T(q)T(\zeta')$ of $\DBun_{TM}(T\underline{E},T\underline{F})$.
\end{proof}

\begin{LemSub}\label{thm:dbun_pr_fp}
The forgetful functor $\DBun_M \rightarrow \X \slash M$ preserves finite products.
\end{LemSub}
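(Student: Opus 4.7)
The plan is to factor the forgetful functor $\DBun_M \to \X\slash M$ as
$$\DBun_M \xrightarrow{U} \CMon(\X\slash M) \xrightarrow{V} \X\slash M$$
and show that each of $U$ and $V$ preserves finite products. The functor $V$ preserving finite products is essentially the content of Proposition \bref{thm:bipr_cmonc} (the nontrivial direction established there), which says that a product in $\CMon(\C)$ is carried by a product in $\C$. So the bulk of the work is to handle $U$.

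First, I would observe that $U$ is an \emph{additive functor} between additive categories. Indeed, $\CMon(\X\slash M)$ is additive by \bref{par:cmonc}, and $\DBun_M$ is additive by the previous Proposition \bref{thm:dbunm_add}; moreover the proof of \bref{thm:dbunm_add} already shows that the hom-set inclusion $\DBun_M(\underline{E},\underline{F}) \hookrightarrow \CMon(\X\slash M)(\underline{E},\underline{F})$ is a submonoid inclusion, which is precisely the additivity of $U$.

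Next, I invoke the general principle from \bref{par:add_cat}: any additive functor preserves finite biproducts, and finite products in an additive category are biproducts. Hence, given a finite family $\underline{E_1},\ldots,\underline{E_n}$ in $\DBun_M$ admitting a product $\underline{F}$, this product is a biproduct in $\DBun_M$, and $U$ carries its biproduct data (projections $\pi_i$, injections $\iota_i$, and the biproduct equations $\iota_i\pi_i=1$, $\iota_i\pi_j=0$ for $i\neq j$, $\sum_i\pi_i\iota_i=1$) to corresponding biproduct data on $U\underline{F}$ in $\CMon(\X\slash M)$. Thus $U\underline{F}$ is a biproduct, hence a product, of the $U\underline{E_i}$ in $\CMon(\X\slash M)$. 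Finally, applying $V$ and the nontrivial direction of Proposition \bref{thm:bipr_cmonc}, the underlying object $VU\underline{F}$ in $\X\slash M$ is a product of the objects $VU\underline{E_i}$, as required.

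There is no real obstacle here; the only small point to be careful about is that $U$ is genuinely additive (not merely functorial between additive categories), but this is already encoded in the argument that $\DBun_M(\underline{E},\underline{F})$ is closed under the pointwise addition and zero of $\CMon(\X\slash M)(\underline{E},\underline{F})$, which we have from the proof of Proposition \bref{thm:dbunm_add}.
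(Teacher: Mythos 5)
Your proposal is correct and follows essentially the same route as the paper: factor the forgetful functor through $\CMon(\X\slash M)$, note that the first factor is additive (as established in the proof of Proposition \bref{thm:dbunm_add}) and hence preserves finite products by \bref{par:add_cat}, and then invoke Proposition \bref{thm:bipr_cmonc} (equivalently, Remark \bref{par:cmon_pr_fp}) for the second factor. The extra detail you give about transporting the biproduct equations along $U$ is just an unpacking of the statement in \bref{par:add_cat} that additive functors preserve finite biproducts, which the paper cites directly.
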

\begin{proof}
In view of the proof of \ref{thm:dbunm_add}, we know that the forgetful functor $\DBun_M \rightarrow \CMon(\X \slash M)$ is additive and hence preserves finite products (by \ref{par:add_cat}).  Hence we may invoke \ref{par:cmon_pr_fp} to deduce that the composite functor $\DBun_M \rightarrow \CMon(\X \slash M) \rightarrow \X \slash M$ preserves finite products.
\end{proof}

\begin{PropSub}\label{thm:prod_dbun}
Let $(\underline{E_i})_{i \in I}$ be a finite family of differential bundles over a fixed object $M$ in an arbitrary tangent category $\X$, and write $\underline{E_i} = (E_i,q_i,\sigma_i,\zeta_i,\lambda_i)$.  Then a product $\prod_{i \in I}\underline{E_i}$ in $\DBun_M$ is equivalently given by a tangential fibre product $(E,q) = \prod_{i \in I}^M(E_i,q_i)$ in $\X$ \pref{def:tangent_limit} and, for each $n \in \NN$, a specified $n$-th tangential fibre power of $q$ \pref{def:tangent_limit}.
\end{PropSub}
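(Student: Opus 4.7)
The plan is to prove each direction of the equivalence separately.

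For the forward direction, suppose a product $\prod_{i \in I}\underline{E_i}$ exists in $\DBun_M$; write it as $\underline{E} = (E, q, \sigma, \zeta, \lambda)$, and note that as part of this differential bundle structure we have specified fibre powers $(\pover{E}{n}{M}, q^{(n)})$ of $q$ for each $n \in \NN$ (see \ref{par:dbun}). By Lemma \ref{thm:dbun_pr_fp}, $(E,q)$ is a product of the $(E_i, q_i)$ in $\X \slash M$, i.e.\ a fibre product in $\X$. To show that this fibre product is tangential, I would iterate Proposition \ref{thm:t_on_db}: each $T^k \colon \DBun_M \to \DBun_{T^k M}$ is a composite of additive functors and hence preserves finite products, so composing with the forgetful functor $\DBun_{T^k M} \to \X \slash T^k M$ (which preserves finite products by Lemma \ref{thm:dbun_pr_fp}) shows that $T^k$ preserves the fibre product in $\X$. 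The specified fibre powers of $q$ are tangential by axiom (1) of \ref{par:dbun} applied to $\underline{E}$.

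For the reverse direction, suppose we are given a tangential fibre product $(E,q) = \prod_i^M (E_i, q_i)$ together with a specified $n$-th tangential fibre power of $q$ for each $n \in \NN$. Applying Proposition \ref{thm:bipr_cmonc} in $\X \slash M$ equips $(E,q)$ with the structure of a product $\underline{E} = (E, q, \sigma, \zeta)$ in $\CMon(\X \slash M)$ whose projections $\pi_i \colon \underline{E} \to \underline{E_i}$ are morphisms of commutative monoids. To construct the lift $\lambda \colon E \to TE$, I would use a universal property: since $(E,q)$ is tangential, $(TE, T(q))$ is a fibre product of the $(TE_i, T(q_i))$ over $TM$, and the morphisms $\pi_i \lambda_i \colon E \to TE_i$ form a cone over $TM$ because $\lambda_i T(q_i) = q_i 0_M$ for each $i$ by axiom (2) of \ref{par:dbun} for $\underline{E_i}$; hence there is a unique $\lambda$ with $\lambda T(\pi_i) = \pi_i \lambda_i$ for every $i$.

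It then remains to verify the differential bundle axioms \ref{par:dbun} for $(E, q, \sigma, \zeta, \lambda)$ and to check the universal property in $\DBun_M$. Axiom (1) is immediate from the hypothesis. Axioms (2), (3), and (5) are equations that I would verify by post-composing with the appropriate $T^k(\pi_i)$, a jointly monic family thanks to the tangentiality of $(E,q)$, and reducing to the corresponding axiom on $\underline{E_i}$ using the defining relation $\lambda T(\pi_i) = \pi_i \lambda_i$ together with naturality of $\ell$ and $p$. I expect the main obstacle to be axiom (4): one must show that the square involving $\mu$ is a tangential pullback. The strategy here is that, since limits commute with limits, $E \times_M E$ is a fibre product of the $E_i \times_M E_i$ over $M$, and by tangentiality the pullback of $T(q)$ along $0_M$ equals $\prod_i^M (TE_i \times_{TM} M) = \prod_i^M (E_i \times_M E_i)$ using axiom (4) for each $\underline{E_i}$; the induced map is seen to be $\mu$ via the defining relation for $\lambda$, and the same argument with $T^k$ applied throughout (using the tangentiality of both $(E,q)$ and each $\underline{E_i}$-pullback) shows that the pullback is tangential. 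Finally, the universal property of $\underline{E}$ as a product in $\DBun_M$ follows because the universal property of $(E,q)$ in $\X \slash M$ supplies a unique mediating morphism from any cone, and its linearity is confirmed by post-composition with the jointly monic family $T(\pi_i)$.
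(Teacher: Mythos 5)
Your proposal is correct and follows essentially the same route as the paper: the same two directions, with \ref{thm:bipr_cmonc} and the tangentiality of $(E,q)$ used to induce $\lambda$ in one direction, and the additive functors $T^n:\DBun_M \rightarrow \DBun_{T^nM}$ from \ref{thm:t_on_db} combined with Lemma \ref{thm:dbun_pr_fp} (via the commuting square of forgetful functors) in the other. The only difference is one of emphasis: you sketch the verification of the differential bundle axioms for $(E,q,\sigma,\zeta,\lambda)$ in some detail, whereas the paper declares that part a straightforward exercise and defers it to the literature, reserving its full argument for the converse direction, which it treats exactly as you do.
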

\begin{proof}
First suppose we are given a tangential fibre product $(E,q) = \prod_{i \in I}^M(E_i,q_i)$ in $\X$ that has finite tangential fibre powers in $\X$.  By \ref{thm:bipr_cmonc}, $(E,q)$ carries the structure of a product $(E,q,\sigma,\zeta) = \prod_{i \in I}(E_i,q_i,\sigma_i,\zeta_i)$ in $\CMon(\X \slash M)$.  By hypothesis, the product projections $\pi_i:(E,q) \rightarrow (E_i,q_i)$ in $\X \slash M$ $(i \in I)$ are sent by $T$ to a product cone $T(\pi_i):(TE,T(q)) \rightarrow (TE_i,T(q_i))$ in $\X \slash TM$.  But in view of \ref{par:dbun}(2) we can form composite morphisms
$$(E,q0_M) \xrightarrow{\pi_i} (E_i,q_i0_M) \xrightarrow{\lambda_i} (TE_i,T(q_i))\;\;\;\;(i \in I)$$
in $\X \slash TM$, which therefore induce a morphism $\lambda:(E,q0_M) \rightarrow (TE,T(q))$.  It is now a straightforward exercise to show that the underlying morphism $\lambda:E \rightarrow TE$ makes $(E,q,\sigma,\zeta,\lambda)$ a differential bundle over $M$, and indeed a product $\prod_{i \in I}\underline{E_i}$ in $\DBun_M$; we shall not dwell on these calculations here, since this was treated in \cite[\S 5]{CoCr:DBun} under slightly different assumptions.

The converse, however, is completely new, and we now provide a full proof.  Suppose we are given a product $\prod_{i \in I}\underline{E_i}$ in $\DBun_M$.  Let us abuse notation by writing $T:\DBun_M \rightarrow \DBun_{TM}$ to denote the additive functor defined in \ref{thm:t_on_db}.  For each $n \in \NN$, we can repeatedly invoke \ref{thm:t_on_db} and by composition define an additive functor $T^n:\DBun_M \rightarrow \DBun_{T^nM}$, given on objects by $\underline{E} \mapsto T^n\underline{E}$.  Moreover we obtain a commutative diagram
\begin{equation}\label{eq:lift_tn}
\xymatrix{
\DBun_M \ar[d] \ar[r]^{T^n} & \DBun_{T^nM} \ar[d]\\
\X \slash M \ar[r]_{T^n}    & \X / T^nM
}
\end{equation}
in which the left and right sides are the forgetful functors and the bottom side is the evident functor induced by $T^n:\X \rightarrow \X$.  But since the top side is an additive functor and hence preserves finite products \pref{par:add_cat}, we can invoke the preceding lemma \pref{thm:dbun_pr_fp} to deduce that the clockwise composite in \eqref{eq:lift_tn} preserves finite products.  By the same lemma, the forgetful functor $\DBun_M \rightarrow \X \slash M$ preserves finite products and hence sends the finite product $\prod_{i \in I}\underline{E_i}$ to a fibre product $(E,q) = \prod_{i \in I}^M(E_i,q_i)$ in $\X$ that is preserved by $T^n$ (since the composite in \eqref{eq:lift_tn} preserves finite products).  But since $(E,q)$ underlies a differential bundle, namely $\prod_{i \in I}\underline{E_i}$, we know that $q$ has finite tangential fibre powers, by \ref{par:dbun}(1).

Using \ref{thm:bipr_cmonc}, it is straightforward to show that the above assignments are mutually inverse.
\end{proof}

\begin{CorSub}
Every differential bundle over $M$ has finite powers in $\DBun_M$.
\end{CorSub}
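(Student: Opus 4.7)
The plan is to deduce this directly from Proposition~\ref{thm:prod_dbun}, which characterizes products in $\DBun_M$ of a finite family of differential bundles purely in terms of tangential fibre products in $\X$ together with specified tangential fibre powers. Thus, given $\underline{E} = (E,q,\sigma,\zeta,\lambda)$ and $n \in \NN$, to exhibit an $n$-th power of $\underline{E}$ in $\DBun_M$ it suffices to produce a tangential $n$-th fibre power of $q:E \to M$ in $\X$ equipped with a specified tangential $k$-th fibre power for every $k \in \NN$.

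The first ingredient is immediate: axiom \ref{par:dbun}(1) applied to $\underline{E}$ supplies a tangential $n$-th fibre power $q^{(n)}_M : \pover{E}{n}{M} \to M$ of $q$ in $\X$, which we take as the underlying object of the prospective product. For the second ingredient, I would observe that for each $k \in \NN$, a $k$-th fibre power of $q^{(n)}_M$ over $M$ is canonically a $kn$-th fibre power of $q$ over $M$, since both are limits of the same diagram consisting of $kn$ copies of $q:E \to M$. Taking these iterated fibre products as the specified fibre powers of $q^{(n)}_M$, we find that each is tangential by a second application of \ref{par:dbun}(1) to $\underline{E}$, as $T^m$ preserves every fibre power of $q$ over $M$. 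Proposition~\ref{thm:prod_dbun} then delivers the desired product $\underline{E}^n$ in $\DBun_M$.

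I do not anticipate any genuine obstacle, as the argument is simply a direct instantiation of Proposition~\ref{thm:prod_dbun} combined with the defining axiom \ref{par:dbun}(1) of a differential bundle; the only minor subtlety is recognizing iterated fibre powers of $q^{(n)}_M$ as fibre powers of $q$ itself, so that tangentiality for the former is subsumed by tangentiality for the latter.
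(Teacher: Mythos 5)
Your proof is correct and follows exactly the route the paper intends: the corollary is stated as an immediate consequence of Proposition~\bref{thm:prod_dbun}, with the required tangential fibre product and tangential fibre powers all supplied by axiom \bref{par:dbun}(1), just as you argue. Your identification of the $k$-th fibre power of $q^{(n)}$ with the $kn$-th fibre power of $q$ is the right (and only) point needing a remark, and it is handled correctly.
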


\begin{ParSub}[\textbf{Notation and terminology}]\label{par:notn_pr_dbun}
Given a finite family of differential bundles $(\underline{E_i})_{i \in I}$ over $M$ with a product in $\DBun_M$, we will denote this product by $\prod_{i \in I}^M\underline{E_i}$.  Such a product carries the structure of a biproduct in $\DBun_M$ \pref{thm:dbunm_add}, which we shall denote by $\bigoplus_{i \in I}^M\underline{E_i}$.  We call each biproduct in $\DBun_M$ a \textbf{biproduct of differential bundles over $M$} or a \textbf{Whitney sum}.  We also employ the notations $\underline{E_1} \times_M \underline{E_2} \times_M ... \times_M \underline{E_n}$ and $\underline{E_1} \oplus_M \underline{E_2} \oplus_M ... \oplus_M \underline{E_n}$ in the case that $I = \{1,...,n\}$.

Given a differential bundle $\underline{E} = (E,q,\sigma,\zeta,\lambda)$ over $M$ and a natural number $n$, we denote the $n$-th power of $\underline{E}$ in $\DBun_M$ by $\pover{\underline{E}}{n}{M}$.  The object of $\X \slash M$ underlying $\pover{\underline{E}}{n}{M}$ is the $n$-th fibre power of $q$ in $\X$, which we denote by $q^{(n)}:\pover{E}{n}{M} \rightarrow M$ \pref{par:fi_pr}.
\end{ParSub}

We shall later make use of the following corollary to \ref{thm:prod_dbun}:

\begin{CorSub}\label{thm:u_bipr_dbun}
Let $(\underline{E_i})_{i \in I}$ be a finite, nonempty family of differential bundles over $M$, written as $\underline{E_i} = (E_i,q_i,\sigma_i,\zeta_i,\lambda_i)$, and suppose that a product $\prod_{i \in I}^M \underline{E_i}$ exists in $\DBun_M$.  Let $E$ be an object of $\X$, and for each $i \in I$, let $\pi_i:E \rightarrow E_i$ be a morphism in $\X$.  Then the following are equivalent:
\begin{enumerate}
\item $E$ underlies a product of differential bundles $\prod_{i \in I}^M \underline{E_i}$ over $M$ with projections $\pi_i$ $(i \in I)$.
\item There exists a (necessarily unique) morphism $q:E \rightarrow M$ in $\X$ such that the morphisms $\pi_i$ $(i \in I)$ present $(E,q)$ as a fibre product of $(q_i:E_i \rightarrow M)_{i \in I}$.
\end{enumerate}
Further, a product $\prod_{i \in I}^M \underline{E_i}$ with underlying object $E$ and projections $\pi_i$ is unique up to \textit{concrete isomorphism} \pref{eq:ci} if it exists.
\end{CorSub}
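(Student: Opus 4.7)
The plan is to leverage Proposition \ref{thm:prod_dbun}, which identifies products in $\DBun_M$ with tangential fibre products in $\X$ equipped with specified $n$-th tangential fibre powers for each $n \in \NN$. The implication (1)$\Rightarrow$(2) is then immediate from Lemma \ref{thm:dbun_pr_fp}: since the forgetful functor $\DBun_M \to \X \slash M$ preserves finite products, if $E$ together with projections $\pi_i$ underlies a product in $\DBun_M$, then it lifts to a fibre product of $(E_i, q_i)_{i \in I}$ in $\X \slash M$ with the same projections. The morphism $q:E \to M$ is forced to equal the common composite $\pi_i q_i$, and the nonemptiness of $I$ is used precisely to guarantee that such a $q$ both exists and is unique.

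For (2)$\Rightarrow$(1), I would employ a transport-of-structure argument. By hypothesis, the product $\prod_{i \in I}^M \underline{E_i}$ exists in $\DBun_M$, so by Proposition \ref{thm:prod_dbun} there is a tangential fibre product $(E', q')$ of $(q_i)_{i \in I}$ in $\X$ carrying specified $n$-th tangential fibre powers of $q'$ for each $n \in \NN$, and with certain projections $\pi'_i : E' \to E_i$. The universal property of the given fibre product $(E, q)$ yields a unique isomorphism $\phi: (E,q) \to (E',q')$ in $\X \slash M$ with $\phi \pi'_i = \pi_i$. Applying each $T^n$ to $\phi$ produces an isomorphism in $\X$ between $T^n(E,q)$ and the tangential fibre product $T^n(E',q')$, showing that $(E,q)$ itself is a tangential fibre product; likewise, transporting the chosen tangential fibre powers of $q'$ along $\phi$ furnishes specified tangential fibre powers of $q$. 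A second application of Proposition \ref{thm:prod_dbun} then equips $(E,q)$ with the structure of a product in $\DBun_M$ whose projections are precisely the $\pi_i$.

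For uniqueness up to concrete isomorphism, suppose $(E,q)$ with projections $\pi_i$ underlies two product structures in $\DBun_M$. By Proposition \ref{thm:prod_dbun}, these two structures can differ only in the specified choice of $n$-th tangential fibre power of $q$ and, consequently, in the addition morphism $\sigma$ (whose domain is the chosen $q^{(2)}$); the zero $\zeta$ and the lift $\lambda$ are uniquely determined, via the universal property of $(E,q)$, by the corresponding data on each $\underline{E_i}$. Hence the identity morphism on $E$ serves as an isomorphism in $\DBun_M$ whose image under $\DBun_M \to \X \slash M$ is the identity, which is a concrete isomorphism in the sense of \ref{eq:ci}.

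The main obstacle I anticipate is the verification that transporting the tangential fibre powers along $\phi$ really yields fibre powers that remain tangential (preserved by every $T^n$); this however reduces to the standard facts that every functor preserves isomorphisms and that a diagram isomorphic to a limit diagram is itself a limit diagram.
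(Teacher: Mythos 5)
Your proposal is correct and follows essentially the route the paper intends: the paper states this result without proof as an immediate corollary of Proposition \ref{thm:prod_dbun}, and your argument is exactly the natural unpacking of that — using \ref{thm:dbun_pr_fp} for (1)$\Rightarrow$(2), transporting the tangential fibre product structure along the induced isomorphism for (2)$\Rightarrow$(1), and noting that the zero and lift are forced by the joint monicity of the projections (which requires $I\neq\emptyset$) for the concrete-isomorphism claim.
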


\begin{DefSub}\label{def:set_has_fp}
Given a set of objects $\E \subseteq \ob\C$ in a category $\C$, we say that $\E$ \textbf{has finite products in $\C$} if for any finite set $I$ and any $I$-indexed family $(E_i)_{i \in I}$ of objects in $\E$, not necessarily distinct, there is an associated product $\prod_{i \in I}E_i$ in $\C$.  If $\E$ is a set of differential bundles over $M$, then we say that \textbf{$\E$ has finite products over $M$} if $\E$ has finite products in $\DBun_M$.
\end{DefSub}

\begin{PropSub}\label{thm:set_has_fp}
For each $i \in I = \{1,...,n\}$, let $\underline{E_i} = (E_i,q_i,\sigma_i,\zeta_i,\lambda_i)$ be a differential bundle over $M$.  Then the following are equivalent:
\begin{enumerate}
\item The set of differential bundles $\{\underline{E_i} 
\mid i \in I\}$ has finite products over $M$.
\item For all $m_1, m_2, ..., m_n \in \NN$, there is a tangential fibre product $\prod_{i \in I}^M\pover{E_i}{m_i}{M}$ of $(q_i^{(m_i)})_{i \in I}$ in $\X$ \pref{def:tangent_limit}, where $q_i^{(m_i)}:\pover{E_i}{m_i}{M} \rightarrow M$ denotes the $m_i$-th fibre power of $q_i$ \pref{par:dbun}.
\end{enumerate}
\end{PropSub}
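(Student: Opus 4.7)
The plan is to derive both implications as direct applications of Proposition~\ref{thm:prod_dbun}, combined with the standard observation that a finite fibre product of finite fibre powers of morphisms over~$M$ is, up to canonical isomorphism, a fibre product of those morphisms taken with multiplicities---and that this identification is preserved by any endofunctor, hence in particular by each $T^k$, so tangentiality transfers correctly.

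For the direction $(1) \Rightarrow (2)$: given $m_1, \ldots, m_n \in \NN$, I would consider the finite family consisting of $m_i$ copies of $\underline{E_i}$ for each $i \in I$; this family is drawn from $\{\underline{E_i} \mid i \in I\}$, so by~(1) it admits a product in $\DBun_M$. By associativity of products, this product is canonically isomorphic to $\prod_{i \in I}^M \pover{\underline{E_i}}{m_i}{M}$, where each $m_i$-th power of $\underline{E_i}$ is itself a product in $\DBun_M$ of $m_i$ copies of $\underline{E_i}$, again existing by~(1). Applying Proposition~\ref{thm:prod_dbun} to this product then exhibits the underlying object as a tangential fibre product of $(q_i^{(m_i)})_{i \in I}$ in $\X$, which is precisely~(2).

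For $(2) \Rightarrow (1)$: any finite family drawn from $\{\underline{E_i} \mid i \in I\}$ can, after grouping and reindexing, be written as the family consisting of $m_i$ copies of $\underline{E_i}$ for some $m_1, \ldots, m_n \in \NN$. By Proposition~\ref{thm:prod_dbun}, to produce the desired product in $\DBun_M$ it suffices to exhibit a tangential fibre product $(E,q)$ in $\X$ of this family of morphisms, together with a specified $k$-th tangential fibre power of $q$ for every $k \in \NN$. By associativity of fibre products, a tangential fibre product of $m_i$ copies of $q_i$ (for each $i$) is canonically the same as a tangential fibre product of the family $(q_i^{(m_i)})_{i \in I}$, which exists by~(2). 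For the fibre powers, observe that for each $k \in \NN$ a $k$-th fibre power of such a fibre product is canonically a fibre product of the family $(q_i^{(k m_i)})_{i \in I}$, and this exists and is tangential by another application of~(2).

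The one step requiring genuine care---rather than the main obstacle---is the bookkeeping that these canonical identifications respect tangentiality. Since each $T^k$ preserves by hypothesis all of the limits used to form fibre products and fibre powers, the canonical comparison isomorphisms exhibiting iterated fibre powers of fibre products as fibre products with multiplicities are automatically preserved by each $T^k$, so no preservation needs to be checked beyond what is granted by~(2). Once this is observed, the argument reduces to a double invocation of Proposition~\ref{thm:prod_dbun}.
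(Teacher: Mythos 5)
Your argument is correct and follows the same route as the paper, whose proof is simply the one-line observation that the finite fibre powers of each $q_i$ are tangential by \ref{par:dbun}(1), so that the claim follows from Proposition \ref{thm:prod_dbun}. You have merely made explicit the multiplicity bookkeeping (identifying fibre products with repeated factors with fibre products of fibre powers, and checking that these identifications are preserved by each $T^k$) that the paper leaves implicit.
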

\begin{proof}
We know that for each $i \in I$, the finite fibre powers of $q_i$ are tangential \pref{par:dbun}, so this follows from \ref{thm:prod_dbun}.
\end{proof}

\begin{CorSub}\label{thm:bc_iff_etm_has_fp}
A differential bundle $\underline{E}$ over $M$ satisfies Basic Condition \ref{par:std_assn} if and only if the set of differential bundles $\{\underline{E},\;\underline{T}M\}$ has finite products over $M$.
\end{CorSub}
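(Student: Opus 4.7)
The plan is to deduce this corollary as a direct specialization of Proposition \ref{thm:set_has_fp}, applied to the two-term family $(\underline{E_1}, \underline{E_2}) = (\underline{E}, \underline{T}M)$ indexed by $I = \{1,2\}$. Under this instantiation, the morphism $q_1 = q$ is the structure map of $\underline{E}$, while $q_2 = p_M$, whose $n$-th fibre power in $\X$ is precisely $p_M^{(n)}:T_nM \rightarrow M$ as defined in \ref{par:abs}. Thus the combinatorics of condition (2) of Proposition \ref{thm:set_has_fp} line up exactly with the combinatorics appearing in Basic Condition \ref{par:std_assn}.

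To carry this out, I would first rewrite condition (2) of Proposition \ref{thm:set_has_fp} under the substitution above: for all $m_1, m_2 \in \NN$, there exists a tangential fibre product $\pover{E}{m_1}{M} \times_M T_{m_2}M$ of $q^{(m_1)}$ and $p_M^{(m_2)}$ in $\X$. This is term-for-term the statement of Basic Condition \ref{par:std_assn} (with $m = m_1$ and $n = m_2$). Next I would observe that condition (1) of Proposition \ref{thm:set_has_fp}, applied to the same family, asserts that the set $\{\underline{E}, \underline{T}M\}$ has finite products over $M$, which is the right-hand side of the desired equivalence. Invoking Proposition \ref{thm:set_has_fp} then yields the corollary.

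There is no substantive obstacle; the only mild point to verify is the degenerate case in which $\underline{E} = \underline{T}M$, so that the set $\{\underline{E}, \underline{T}M\}$ collapses to a singleton. In this case Proposition \ref{thm:set_has_fp} still applies to the two-term indexed family (the set-versus-family distinction is immaterial, since Definition \ref{def:set_has_fp} permits non-distinct entries), and moreover both sides of the equivalence hold automatically by \ref{par:exa_std_assn}(1). Hence the reduction to Proposition \ref{thm:set_has_fp} gives the result in full generality.
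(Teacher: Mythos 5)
Your proposal is correct and is essentially the paper's own (implicit) argument: the corollary is stated without proof precisely because it is the specialization of Proposition \ref{thm:set_has_fp} to the two-term family $(\underline{E},\underline{T}M)$, under which condition (2) becomes Basic Condition \ref{par:std_assn} verbatim and condition (1) becomes the right-hand side of the equivalence. Your remark on the degenerate case $\underline{E}=\underline{T}M$ is a harmless extra check, already covered by Definition \ref{def:set_has_fp} allowing non-distinct entries.
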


\section{The partial bundles of a biproduct}\label{sec:pbun}

\begin{ParSub}\label{par:given_fam}
In the present section, we let $\underline{E} = \prod_{i \in I}^M\underline{E_i}$ denote a given finite product of differential bundles $\underline{E_i} = (E_i,q_i,\sigma_i,\zeta_i,\lambda_i)$ over $M$ in $\X$, and we suppose that the set of differential bundles $\{\underline{E_i} \mid i \in I\}$ has finite products over $M$ \pref{def:set_has_fp}.  Let us write $\underline{E} = (E,q,\sigma,\zeta,\lambda)$.
\end{ParSub}

We now show that $E$ carries the structure of a differential bundle over $E_j$, for each $j \in I$, which we call the \textit{$j$-th partial bundle of the product $\prod_{i \in I}^M\underline{E_i}$}.  As we shall see in \ref{thm:par_bun}(2) and \ref{par:add_jpb}, the fibrewise addition operation in the $j$-th partial bundle is given by leaving the $j$-th coordinate fixed and adding the respective $i$-th coordinates for each $i \neq j$.

To this end, if we let $j \in I$ then we can form a product of differential bundles $\prod_{i \in I\backslash\{j\}}^M\underline{E_i}$
whose underlying object of $\X \slash M$ is a fibre product $\prod_{i \in I\backslash\{j\}}^ME_i$ over $M$ \pref{thm:dbun_pr_fp}.  We obtain a pullback square
\begin{equation}\label{eq:pb_sq}
\xymatrix{
*!<3.5ex,0ex>{E = \prod_{i \in I}^M E_i} \ar[d]_{\pi_j} \ar[r]^{\pi_{I\backslash\{j\}}} &*!<-2ex,0ex>{\prod_{i \in I\backslash\{j\}}^ME_i} \ar[d]^{\text{proj}}\\
E_j \ar[r]_{q_j}                                                  & M
}
\end{equation}
in which $\pi_{I\backslash\{j\}}$ and $\text{proj}$ denote the evident projection morphisms.  We now show that this pullback square \textit{underlies} a pullback of differential bundles, in the sense of \ref{par:pb_dbun}:

\begin{PropSub}\label{thm:parbun}
For each $j \in I$, the differential bundle $\prod_{i \in I\backslash\{j\}}^M \underline{E_i}$ has a pullback
\begin{equation}\label{eq:jpbun}q^*_j\left(\prod\nolimits_{i \in I\backslash \{j\}}^M\underline{E_i}\right)\end{equation}
along $q_j:E_j \rightarrow M$ whose underlying pullback in $\X$ is the square \eqref{eq:pb_sq}.
\end{PropSub}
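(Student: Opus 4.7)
The approach is to invoke Proposition \ref{thm:pb_dbun}, which reduces existence of the pullback differential bundle \eqref{eq:jpbun} to the existence of certain tangential pullbacks in $\X$; these will then be produced directly from the standing hypothesis on $\{\underline{E_i} \mid i \in I\}$ via Proposition \ref{thm:set_has_fp}.

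Write $\underline{F} = \prod_{i \in I\backslash\{j\}}^M \underline{E_i}$, with underlying morphism $r: F \to M$. By Proposition \ref{thm:pb_dbun}, it suffices to show that for each $n \in \NN$, the $n$-th fibre power $r^{(n)}: \pover{F}{n}{M} \to M$ admits a tangential pullback along $q_j$ in $\X$. The key identification is that the $n$-th power $\pover{\underline{F}}{n}{M}$ in $\DBun_M$ may be rewritten, by associativity of products in $\DBun_M$, as $\prod_{i \in I\backslash\{j\}}^M \pover{\underline{E_i}}{n}{M}$; this product exists in $\DBun_M$ under the standing hypothesis, since each $\pover{\underline{E_i}}{n}{M}$ is itself a product of $n$ copies of $\underline{E_i}$. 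Invoking Lemma \ref{thm:dbun_pr_fp} together with Proposition \ref{thm:prod_dbun}, the underlying object $\pover{F}{n}{M}$ is then a tangential fibre product $\prod_{i \in I\backslash\{j\}}^M \pover{E_i}{n}{M}$ in $\X$.

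Consequently, a tangential pullback of $r^{(n)}$ along $q_j$ in $\X$ amounts to nothing other than a tangential fibre product $\prod_{i \in I}^M \pover{E_i}{m_i}{M}$ with $m_j = 1$ and $m_i = n$ for $i \neq j$, and Proposition \ref{thm:set_has_fp} supplies precisely such tangential fibre products from the standing hypothesis. This yields the pullback differential bundle \eqref{eq:jpbun}. To identify its underlying pullback in $\X$ with the square \eqref{eq:pb_sq}, I would specialize to $n = 1$: the tangential pullback of $r$ along $q_j$ is then the tangential fibre product $\prod_{i \in I}^M E_i = E$, equipped with the projections $\pi_j$ and $\pi_{I\backslash\{j\}}$ appearing in \eqref{eq:pb_sq}. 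The step I expect to warrant the most care is the identification of $\pover{\underline{F}}{n}{M}$ with $\prod_{i \in I\backslash\{j\}}^M \pover{\underline{E_i}}{n}{M}$ in $\DBun_M$, to ensure that the tangential fibre products in $\X$ thereby invoked are exactly those directly provided by the standing hypothesis and not some uncontrolled limits outside its scope.
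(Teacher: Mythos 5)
Your proposal is correct and follows essentially the same route as the paper's proof: both reduce the claim via Proposition \ref{thm:pb_dbun} to the existence of tangential pullbacks of the fibre powers $r^{(n)}$ along $q_j$, identify $\pover{F}{n}{M}$ with the tangential fibre product $\prod_{i \in I\backslash\{j\}}^M \pover{E_i}{n}{M}$, and obtain the required tangential pullback as the tangential fibre product $\prod_{i \in I}^M \pover{E_i}{m_i}{M}$ (with $m_j = 1$ and $m_i = n$ otherwise) supplied by Proposition \ref{thm:set_has_fp}. The step you flag as delicate is exactly the identification the paper also makes (its object $G$), so no gap remains.
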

\begin{proof}
Let $\underline{F} = \prod_{i \in I'}^{\scriptscriptstyle M}\underline{E_i}$ where $I' = I\backslash\{j\}$.  Hence the object $(F,r)$ of $\X \slash M$ underlying $\underline{F}$ is a fibre product $F = \prod_{i \in I'}^{\scriptscriptstyle M} E_i \rightarrow M$ in $\X$ \pref{thm:dbun_pr_fp}.  For each $n \in \NN$, we deduce by \ref{thm:set_has_fp} that there are tangential fibre products 
$$G = \prod\nolimits_{i \in I'}^M \pover{E_i}{n}{M}\;\;\;\;\;\text{and}\;\;\;\;\;H = \prod\nolimits_{i \in I}^M\pover{E_i}{m_i}{M}\;,$$
where we define $m_j = 1$ and $m_i = n$ for each $i \in I'$.  We can write $H$ as a fibre product $H = E_j \times_M G$, and we now deduce that the latter (binary) fibre product is tangential \pref{def:tangent_limit}.  But the projection $G \rightarrow M$ is an $n$-th fibre power $r^{(n)}:\pover{F}{n}{M} \rightarrow M$ of $(F,r)$.  Hence the projection $\pi_1:E_j \times_M G \rightarrow E_j$ is a tangential pullback of $r^{(n)}$ along $q_j:E_j \rightarrow M$ \pref{def:tangent_limit}.  Therefore the needed pullback bundle $q_j^*(\underline{F})$ exists \pref{par:pb_dbun}, and we may take its underlying pullback in $\X$ to be the square \eqref{eq:pb_sq}.
\end{proof}

\begin{DefSub}\label{def:jth_pb}
Given data as in \ref{par:given_fam}, let $j \in I$.  By \ref{thm:parbun}, the projection $\pi_j:E \rightarrow E_j$ underlies a differential bundle $q_j^*(\prod_{i \in I\backslash\{j\}}^M\underline{E_i})$, which we call the \textbf{$j$-th partial bundle} of the (bi)product $\underline{E} = \prod_{i \in I}^M\underline{E_i} = \oplus_{i \in I}^M\underline{E_i}$ \pref{par:notn_pr_dbun}.  More precisely, given an arbitrary differential bundle $\underline{P}$ over $E_j$, we say that $\underline{P}$ is a \textit{$j$-th partial bundle} (of the given product) if there exists a cartesian arrow $\alpha:\underline{P} \rightarrow \prod_{i \in I\backslash\{j\}}^M\underline{E_i}$ in $\DBun$ whose underlying pullback in $\X$ is the square \eqref{eq:pb_sq}, cf. \ref{par:pb_dbun}.  By \ref{thm:parbun}, a $j$-th partial bundle exists, and by \ref{par:pb_dbun} any two $j$-th partial bundles are \textit{concretely isomorphic} in the terminology of \ref{eq:ci}.  Hence we call any $j$-th partial bundle of $\prod_{i \in I}^M\underline{E_i}$ \textit{the $j$-th partial bundle}, with the understanding that this notion is defined only up to concrete isomorphism.
\end{DefSub}

\begin{RemSub}\label{rem:jth_pb}
It is immediate from Definition \ref{def:jth_pb} that if $\underline{P} = (P,q^j,\sigma^j,\zeta^j,\lambda^j)$ is a $j$-th partial bundle  of the product $\underline{E} = \prod_{i \in I}^M\underline{E_i}$, then $P = E$, and the associated morphism $q^j:P \rightarrow E_j$ is the projection $\pi_j:E \rightarrow E_j$.  Further, the cartesian arrow $\alpha = (\alpha_1,\alpha_2)$ in \ref{def:jth_pb} necessarily consists of the top and bottom sides of the pullback square \eqref{eq:pb_sq}, i.e. $\alpha_1 = \pi_{I\backslash\{j\}}$ and $\alpha_2 = q_j$.  Since $\alpha$ is thus uniquely determined, we obtain the following characterization of the $j$-th partial bundle:
\end{RemSub}

\begin{PropSub}\label{thm:par_bun_lem}
Suppose we are given data as in \ref{par:given_fam}, and let $j \in I$.  Then a differential bundle $\underline{P}$ over $E_j$ is a $j$-th partial bundle of $\underline{E} = \prod_{i \in I}^M\underline{E_i}$ if and only if the following conditions hold: (1) The object of $\X \slash E_j$ underlying $\underline{P}$ is the projection $\pi_j:E \rightarrow E_j$, and (2) the commutative square \eqref{eq:pb_sq} underlies a linear morphism of differential bundles $(\pi_{I\backslash\{j\}},q_j):\underline{P} \rightarrow \prod_{i \in I\backslash\{j\}}^M\underline{E_i}$.
\end{PropSub}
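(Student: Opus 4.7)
The plan is to prove the biconditional essentially by unpacking Definition \ref{def:jth_pb} and combining it with the characterization of cartesian arrows given in \ref{par:pb_dbun}.

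For the forward direction, I would invoke Remark \ref{rem:jth_pb}: if $\underline{P}$ is a $j$-th partial bundle, then the associated cartesian arrow $\alpha:\underline{P} \rightarrow \prod_{i \in I\backslash\{j\}}^M\underline{E_i}$ must be exactly $(\pi_{I\backslash\{j\}},q_j)$, and the underlying object of $\underline{P}$ in $\X \slash E_j$ must be $\pi_j:E \rightarrow E_j$. This immediately yields condition (1). Since any cartesian arrow in $\DBun$ is in particular a morphism in $\DBun$, hence a linear morphism of differential bundles \pref{par:cat_dbuns}, condition (2) also follows at once.

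For the reverse direction, suppose $\underline{P}$ satisfies (1) and (2). Then $(\pi_{I\backslash\{j\}},q_j):\underline{P} \rightarrow \prod_{i \in I\backslash\{j\}}^M\underline{E_i}$ is a morphism in $\DBun$ lying over $q_j:E_j \rightarrow M$, and by (1) its underlying commutative square in $\X$ is precisely \eqref{eq:pb_sq}, which we already know is a pullback square in $\X$. By \ref{thm:parbun}, the pullback differential bundle $q_j^*(\prod_{i \in I\backslash\{j\}}^M\underline{E_i})$ exists. Hence by the criterion recorded in \ref{par:pb_dbun}, a morphism in $\DBun$ lying over $q_j$ is cartesian if and only if its underlying square in $\X$ is a pullback; so $(\pi_{I\backslash\{j\}},q_j)$ is a cartesian arrow, exhibiting $\underline{P}$ as a $j$-th partial bundle per Definition \ref{def:jth_pb}.

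Since all the technical groundwork has already been laid in \ref{par:pb_dbun}, \ref{thm:parbun}, and \ref{rem:jth_pb}, there is no real obstacle to overcome; the argument is essentially a bookkeeping exercise verifying that the data supplied in conditions (1) and (2) match exactly the data called for by the definition of a $j$-th partial bundle. The only subtlety worth flagging explicitly is that the underlying square of the linear morphism in (2) agrees with \eqref{eq:pb_sq} by virtue of (1), so that the cartesianness criterion from \ref{par:pb_dbun} applies without any further diagram chase.
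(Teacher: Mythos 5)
Your argument is correct and matches the paper's own proof essentially verbatim: the forward direction via Remark \ref{rem:jth_pb}, and the converse by observing that the underlying square of $(\pi_{I\backslash\{j\}},q_j)$ is the pullback \eqref{eq:pb_sq} and then applying the cartesianness criterion of \ref{par:pb_dbun} together with \ref{thm:parbun}. No issues.
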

\begin{proof}
By \ref{rem:jth_pb}, if $\underline{P}$ is a $j$-th partial bundle then the given conditions hold.  Conversely, if the given conditions hold, then since the commutative square underlying $\alpha = (\pi_{I\backslash\{j\}},q_j):\underline{P} \rightarrow \prod_{i \in I\backslash\{j\}}^M\underline{E_i}$ is a pullback in $\X$, it follows that $\alpha$ is a cartesian arrow in $\DBun$, by \ref{par:pb_dbun} (and \ref{thm:parbun}), so $\underline{P}$ is a $j$-th partial bundle of the given product.
\end{proof}

This leads us to the following concrete characterization of the $j$-th partial bundle:

\begin{PropSub}\label{thm:par_bun}
Suppose that we are given data as in \ref{par:given_fam}, and let $j \in I$.
\begin{enumerate}
\item The $j$-th partial bundle of the product $\underline{E} = \prod_{i \in I}^M\underline{E_i}$ is a differential bundle $\underline{P}$ over $E_j$ that is uniquely characterized, up to concrete isomorphism \pref{def:jth_pb}, by the following statements:
\begin{enumerate}
\item the object of $\X \slash E_j$ underlying $\underline{P}$ is the projection $\pi_j:E \rightarrow E_i$, and
\item $(\pi_i,q_j):\underline{P} \rightarrow \underline{E_i}$ is a linear morphism of differential bundles, for each $i \in I\backslash\{j\}$.
\end{enumerate}
\item Letting $\underline{P} = (E,\pi_j,\sigma^j,\zeta^j,\lambda^j)$ be the $j$-th partial bundle of $\underline{E} = \prod_{i \in I}^M\underline{E_i}$, the following diagrams commute, where $\textnormal{proj}$ denotes the projection morphism.  For a given choice of fibre product $E \times_{E_j} E$, the commutativity of these diagrams uniquely characterizes $\sigma^j$, $\zeta^j$, and $\lambda^j$.
\begin{equation}\label{eq:pbun_d1}
\xymatrix{
E \ar[d]_{\pi_i} \ar[r]^{\lambda^j}                            & TE \ar[d]^{T(\pi_i)} & E \times_{E_j} E \ar[d]_{\pi_i \times \pi_i} \ar[r]^(.6){\sigma^j} & E \ar[d]^{\pi_i} & E_j \ar[d]_{q_j} \ar[r]^{\zeta^j} & E \ar[d]^{\pi_i} & \ar@{}[d]|{\displaystyle\;\;\;(i \in I\backslash\{j\})}\\
E_i \ar[r]_{\lambda_i}                                         & TE_i                 & E_i \times_M E_i \ar[r]_(.6){\sigma_i} & E_i & M \ar[r]_{\zeta_i}                & E_i & 
}
\end{equation}
\begin{equation}\label{eq:pbun_d2}
\xymatrix{
E \ar[d]_{\pi_j} \ar[r]^{\lambda^j} & TE \ar[d]^{T(\pi_j)} & E \times_{E_j} E \ar[dr]_{\textnormal{proj}} \ar[r]^(.6){\sigma^j} & E \ar[d]^{\pi_j} & E_j \ar[dr]_{1} \ar[r]^{\zeta^j} & E \ar[d]^{\pi_j} & \ar@{}[d]|{\phantom{\displaystyle\;\;\;(i \in I\backslash{j})}}\\
E_j \ar[r]_{0_{E_j}}                                     & TE_j                 &  & E_j &                                  & E_j &
}
\end{equation}
\item The zero section $\zeta^j:E_j \rightarrow E$ carried by the $j$-th partial bundle $\underline{P}$ is precisely the injection $\iota_j:\underline{E_j} \rightarrow \underline{E}$ associated to the biproduct $\underline{E} = \oplus_{i \in I}^M\underline{E_i}$.
\end{enumerate}
\end{PropSub}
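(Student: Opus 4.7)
The plan is to establish the three parts in order, using Proposition \ref{thm:par_bun_lem} as the backbone and reducing everything else to joint-monicity arguments for fibre products together with the additive structure on $\DBun_M$.

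For Part (1), I would invoke \ref{thm:par_bun_lem} and argue that its condition (2) — that $(\pi_{I\setminus\{j\}}, q_j)\colon \underline{P} \to \prod_{i \in I\setminus\{j\}}^M \underline{E_i}$ is a linear morphism in $\DBun$ — is equivalent to the componentwise condition (b), namely that each $(\pi_i, q_j)\colon \underline{P} \to \underline{E_i}$ with $i \neq j$ is linear. One direction is immediate: post-compose with the linear projections $(\pi_i', 1_M)$ of the target product. For the converse, the base-map compatibility is given by the pullback square \eqref{eq:pb_sq}, and the lift equation $\lambda^j T(\pi_{I\setminus\{j\}}) = \pi_{I\setminus\{j\}}\lambda^{I\setminus\{j\}}$ can be verified after post-composing with each $T(\pi_i')$. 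This reduction is legitimate because $T$ preserves the tangential fibre product underlying $\prod_{i \in I\setminus\{j\}}^M \underline{E_i}$ (by \ref{thm:prod_dbun}), so the projections $T(\pi_i')$ are jointly monic in $\X$.

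For Part (2), the three diagrams in \eqref{eq:pbun_d1} say precisely that each $(\pi_i, q_j)\colon \underline{P} \to \underline{E_i}$ commutes with $\lambda$, $\sigma$, and $\zeta$ respectively; since linear morphisms of differential bundles are automatically morphisms of additive bundles by \ref{par:cat_dbuns}, all three follow from Part (1). The diagrams in \eqref{eq:pbun_d2} encode the defining axioms of $\underline{P}$ as a bundle over $E_j$: the identity $\lambda^j T(\pi_j) = \pi_j 0_{E_j}$ comes from \ref{par:dbun}(2), while $\sigma^j \pi_j = \textnormal{proj}$ and $\zeta^j \pi_j = 1_{E_j}$ are the commutative-monoid axioms for $\pi_j$. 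Uniqueness of $\sigma^j$ and $\zeta^j$ follows from joint monicity of $(\pi_i\colon E \to E_i)_{i \in I}$ in the fibre product $E = \prod_{i\in I}^M E_i$ (\ref{par:fi_pr}). Uniqueness of $\lambda^j$ follows from joint monicity of $T(\pi_i)$, which holds because the tangential fibre product $E$ is preserved by $T$ (again by \ref{thm:prod_dbun}).

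For Part (3), I would appeal to the universal characterization of the $j$-th injection of a biproduct in the additive category $\DBun_M$ (\ref{par:add_cat}): $\iota_j$ is the unique morphism in $\DBun_M(\underline{E_j}, \underline{E})$ with $\iota_j \pi_j = 1_{E_j}$ and $\iota_j \pi_i = 0$ for $i \neq j$. From the proof of \ref{thm:dbunm_add}, the zero morphism $\underline{E_j} \to \underline{E_i}$ in $\DBun_M$ is precisely the composite $q_j \zeta_i$. The right-hand diagrams of \eqref{eq:pbun_d2} and \eqref{eq:pbun_d1} give exactly $\zeta^j \pi_j = 1_{E_j}$ and $\zeta^j \pi_i = q_j \zeta_i$ for $i \neq j$. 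To conclude $\zeta^j = \iota_j$ it remains to check that $\zeta^j$ lies in $\DBun_M(\underline{E_j}, \underline{E})$: the compatibility $\zeta^j q = q_j$ holds since $q = \pi_j q_j$ and $\zeta^j \pi_j = 1_{E_j}$, while linearity of $\zeta^j$ follows by the same joint-monicity argument as in Part (1), since each component $\zeta^j \pi_i$ is either the identity or the zero morphism and hence linear.

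The main obstacle is the linearity-reduction underlying Parts (1) and (3): checking that linearity of a morphism into a product of differential bundles is equivalent to componentwise linearity of its projections. This hinges on the hypothesis of \ref{par:given_fam} and the characterization of products in $\DBun_M$ as tangential fibre products (\ref{thm:prod_dbun}), which ensures that $T$ preserves the relevant fibre product and hence that the tangent projections are jointly monic — the decisive fact needed both to check the lift equation componentwise and to identify $\zeta^j$ with $\iota_j$.
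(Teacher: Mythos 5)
Your proposal is correct and follows essentially the same route as the paper: Proposition \ref{thm:par_bun_lem} together with joint monicity of the fibre-product projections $\pi_i$ and $T(\pi_i)$ (the latter because the fibre product is tangential), automatic additivity of linear morphisms for \eqref{eq:pbun_d1}, and the characterization of $\iota_j$ by $\iota_j\pi_j = 1$ and $\iota_j\pi_i = q_j\zeta_i$. The only local differences are that the paper obtains the uniqueness in part (1) from the part-(2) diagrams rather than from your two-way componentwise-linearity equivalence, and in part (3) it substitutes $\iota_j$ into the part-(2) characterization of $\zeta^j$, thereby avoiding your extra verification that $\zeta^j$ is itself a linear morphism over $M$ --- both of your variants are valid.
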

\begin{proof}
Let us first show that any $j$-th partial bundle $\underline{P}$ satisfies conditions 1(a,b).  By \ref{thm:par_bun_lem} we know that (a) holds. For each $i \in I\backslash\{j\}$, we can apply \ref{thm:par_bun_lem} in order to obtain a composite morphism 
$$\underline{P} \xrightarrow{(\pi_{I\backslash\{j\}},q_j)} \prod\nolimits_{i \in I\backslash \{j\}}^M\underline{E_i} \xrightarrow{(\pi_i,1_M)} \underline{E_i}$$
in $\DBun$, but this composite is precisely $(\pi_i,q_j):\underline{P} \rightarrow \underline{E_i}$, so (b) holds.

Next note that if an arbitrary differential bundle $\underline{P} = (P,q^j,\sigma^j,\zeta^j,\lambda^j)$ over $E_j$ satisfies conditions 1(a,b), then the diagrams \eqref{eq:pbun_d1} commute since linear morphisms are additive \pref{par:cat_dbuns}, and the diagrams \eqref{eq:pbun_d2} commute since $\underline{P}$ is a differential bundle.  Since the fibre product $E = \prod_{i \in I}^M E_i$ is tangential \pref{thm:prod_dbun} and hence is preserved by $T$, the equations that assert commutativity of \eqref{eq:pbun_d1} and \eqref{eq:pbun_d2} uniquely characterize $\lambda^j$, $\sigma^j$, and $\zeta^j$ once the second fibre power $E \times_{E_j} E$ is chosen.

It follows that any differential bundle $\underline{P}$ satisfying conditions 1(a,b) is uniquely characterized up to concrete isomorphism by 1(a) together with the commutativity of the diagrams \eqref{eq:pbun_d1} and \eqref{eq:pbun_d2}.  Hence any two differential bundles over $E_j$ that satisfy 1(a,b) are concretely isomorphic.

Thus 1 and 2 are proved.  For 3, recall from \ref{par:add_cat} that the morphism $\iota_j:\underline{E_j} \rightarrow \underline{E} = \oplus_{i \in I}^M\underline{E_i}$ in $\DBun_M$ is uniquely defined by the equations $\iota_j\pi_j = 1_{E_j}$ and $\iota_j\pi_i = 0$ $(i \in I \backslash \{j\})$, where we write $0$ for the zero element of the commutative monoid $\DBun_M(\underline{E_j},\underline{E_i})$. Explicitly, this zero element is the composite $q_j\zeta_i$ that appears in \eqref{eq:pbun_d1}.  Hence if we substitute the morphism $\iota_j:E_j \rightarrow E$ in place of $\zeta^j$ within the rightmost diagrams in \eqref{eq:pbun_d1} and \eqref{eq:pbun_d2}, respectively, then the resulting two diagrams commute.  But by 2, the commutativity of these diagrams uniquely characterizes $\zeta^j$, so $\iota_j = \zeta^j$.
\end{proof}

\begin{ParSub}[\textbf{Addition in the $j$-th partial bundle}]\label{par:add_jpb}
By forming the $j$-th partial bundle of a product $\underline{E} = \prod_{i \in I}^M \underline{E_i}$, we equip $E$ with the structure of a commutative monoid $(E,\pi_j,\sigma^j,\zeta^j)$ in $\X \slash E_j$ \pref{thm:par_bun}.  Hence for each object $(X,x)$ of $\X \slash E_j$, the hom-set
$$(\X \slash E_j)((X,x),(E,\pi_j))$$
acquires the structure of a commutative monoid, whose addition operation we shall write as $+^j$.  We call $+^j$ the operation of \textbf{addition with respect to the $j$-th partial bundle} of the given product.

In order to characterize this addition operation $+^j$, first recall that for any object $X$ of $\X$, a morphism $f:X \rightarrow E = \prod^M_{i \in I}E_i$ is equivalently given by a family of morphisms $f_i = f\pi_i:X \rightarrow E_i$ such that $f_iq_i = f_kq_k:X \rightarrow M$ for all $i,k \in I$ \pref{par:fi_pr}.  Given morphisms $f,g:X \rightarrow E$ induced by $f_i,g_i:X \rightarrow E_i$ $(i \in I)$, respectively, if we assume that $f_j = g_j$ then upon setting $x = f_j = g_j$ we find that $f,g:(X,x) \rightarrow (E,\pi_j)$ in $\X \slash E_j$, so we can consider the sum
$$f +^j g\;:\;X \rightarrow E = \prod\nolimits^M_{i \in I}E_i\;.$$
In order to characterize this sum, first note that for each $i \in I$ we can construe $f_i$ and $g_i$ as morphisms $f_i,g_i:(X,x_M) \rightarrow (E_i,q_i)$ in $\X \slash M$, where $x_M = xq_j:X \rightarrow M$.  With this notation, we obtain the following:
\end{ParSub}

\begin{PropSub}\label{thm:addn_pb}
Given morphisms $f,g:X \rightarrow E = \prod_{i \in I}^ME_i$ in $\X$ such that $f_j = g_j$, the sum $f +^j g:X \rightarrow E$ is uniquely characterized by the following equations
$$
(f +^j g)\pi_i = \begin{cases}
                    \;f_i + g_i \::\:X \rightarrow E_i & (i \neq j)\\
                    f_j = g_j\::\:X \rightarrow E_j         & (i = j)                  
                 \end{cases}
$$
$(i \in I)$, where we write $+$ to denote the addition operation that is carried by the hom-set $(\X \slash M)((X,x_M),(E_i,q_i))$ and is induced by the commutative monoid $(E,q_i,\sigma_i,\zeta_i)$ in $\X \slash M$ underlying $\underline{E_i}$.
\end{PropSub}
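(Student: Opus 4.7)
The plan is to unfold the definition of $+^j$ and project onto each factor $\pi_i$. By definition of addition in the commutative monoid $(E,\pi_j,\sigma^j,\zeta^j)$ over $E_j$, the sum $f +^j g$ is the composite
$$X \xrightarrow{\langle f,g\rangle} E \times_{E_j} E \xrightarrow{\sigma^j} E\,,$$
where $\langle f,g\rangle$ is the pairing into the chosen fibre product of two copies of $\pi_j$ over $E_j$; this pairing is well-defined precisely because the hypothesis $f_j = g_j$ amounts to $f\pi_j = g\pi_j$.

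Next, I would compose with each projection $\pi_i : E \to E_i$ and invoke the middle diagrams of \eqref{eq:pbun_d1} and \eqref{eq:pbun_d2}, which were shown in \ref{thm:par_bun}(2) to characterize $\sigma^j$. For $i \in I \setminus \{j\}$, the middle diagram of \eqref{eq:pbun_d1} gives $\sigma^j \pi_i = (\pi_i \times \pi_i)\sigma_i$, hence
$$(f +^j g)\pi_i \;=\; \langle f,g\rangle (\pi_i \times \pi_i)\sigma_i \;=\; \langle f_i, g_i\rangle \sigma_i \;=\; f_i + g_i\,,$$
the last equality being the definition of $f_i + g_i$ in $(\X\slash M)((X,x_M),(E_i,q_i))$ induced by the commutative monoid $(E_i,q_i,\sigma_i,\zeta_i)$ in $\X\slash M$. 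For $i = j$, the middle diagram of \eqref{eq:pbun_d2} gives $\sigma^j \pi_j = \text{proj}$, whence
$$(f +^j g)\pi_j \;=\; \langle f,g\rangle\,\text{proj} \;=\; f\pi_j \;=\; f_j\,,$$
which equals $g_j$ by hypothesis.

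Uniqueness is immediate: since the family $(\pi_i : E \to E_i)_{i \in I}$ presents $E$ as a fibre product $\prod_{i \in I}^M E_i$ in $\X$ \pref{par:given_fam}, it is a jointly monic family, so any morphism $X \to E$ is determined by its composites with the $\pi_i$. There is no substantive obstacle in this proof; the argument is entirely diagrammatic, relying only on the characterization of $\sigma^j$ provided by \ref{thm:par_bun}(2) together with the universal property of the fibre product $E = \prod_{i \in I}^M E_i$.
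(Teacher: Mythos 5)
Your proof is correct and follows essentially the same route as the paper's (which simply cites \ref{thm:par_bun}(2) for the equations and joint monicity of the $\pi_i$ for uniqueness); you have merely supplied the details the paper leaves as ``straightforward,'' namely unfolding $f +^j g$ as $\langle f,g\rangle\sigma^j$ and chasing the middle diagrams of \eqref{eq:pbun_d1} and \eqref{eq:pbun_d2}.
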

\begin{proof}
It is straightforward to employ \ref{thm:par_bun}(2) to show that the given equations hold.  The needed uniqueness follows from the fact that the projections $\pi_i:E \rightarrow E_i$ $(i \in I)$ are jointly monic in $\X$, since $I \neq \emptyset$ \pref{par:fi_pr}.  
\end{proof}

\section{The biproduct decomposition determined by a connection}\label{sec:bipr_cx}

Let $\underline{E} = (E,q,\sigma,\zeta,\lambda)$ be a differential bundle over $M$, and assume that $\underline{E}$ satisfies Basic Condition \ref{par:std_assn}.  Let us begin by recalling the following proposition of Cockett and Cruttwell:

\newpage

\begin{PropSub}[{\cite[Prop. 5.8]{CoCr:Conn}}]\label{thm:cx_fp}
If $(K,H)$ is a connection on $\underline{E}$, then the following is a fibre product diagram in $\X$:
\begin{equation}\label{eq:cx_ld}
\xymatrix{
            & TE \ar[dl]_{p_E} \ar[d]|{T(q)} \ar[dr]^K & \\
E \ar[dr]_q & TM \ar[d]|{p_M}                          & E \ar[dl]^q\\
   & M                                                 &
}
\end{equation}
\end{PropSub}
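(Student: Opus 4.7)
The plan is to show that the canonical morphism
$$\Phi \;:=\; \langle p_E, T(q), K\rangle \;:\; TE \longrightarrow E \times_M TM \times_M E$$
is an isomorphism, where the codomain is the tangential fibre product provided by Basic Condition \pref{par:std_assn} (cf. \ref{thm:bc_iff_etm_has_fp}). First I would verify that $\Phi$ lands in this fibre product: the required equations $p_E q = T(q) p_M$ and $K q = T(q) p_M$ follow, respectively, from naturality of $p:T \Rightarrow 1_\X$ and from axiom (C.1), which asserts that $(K, p_M):T\underline{E} \to \underline{E}$ is a linear morphism of differential bundles.

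Next I would construct an explicit candidate inverse $\Psi:E \times_M TM \times_M E \to TE$ by mimicking the decomposition furnished by axiom (C.6). Namely, set
$$\Psi \;:=\; \langle\pi_1, \pi_2\rangle H \;+_E\; \langle \pi_3, \pi_1\rangle \mu,$$
where $+_E$ denotes the addition on morphisms into $(TE, p_E)$ induced by the commutative monoid $(TE, p_E, +_E, 0_E)$ in $\X\slash E$. To see that $\Psi$ is well-defined I would check that both summands are morphisms into $(TE, p_E)$ sharing the same composite with $p_E$: the identity $p_E H = \pi_1$ follows from $H$ being a section of $\langle p_E, T(q)\rangle$, while $p_E \mu = \pi_2$ can be extracted from the definition of $\mu$ in \ref{par:dbun}(4) together with naturality of $p$, the unit axiom for $\sigma$, and the identity $p_E \lambda = q\zeta$ supplied by \ref{par:dbun}(3).

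The core of the proof is to show that $\Phi$ and $\Psi$ are mutually inverse. The equation $\Psi\Phi = 1_{TE}$ is a direct reformulation of axiom (C.6). Conversely, to establish $\Phi \Psi = 1$ on the fibre product I would check the three components $p_E \Psi = \pi_1$, $T(q)\Psi = \pi_2$, and $K\Psi = \pi_3$ in turn. The first is immediate since $+_E$ is fibrewise over $E$ via $p_E$. For $T(q)\Psi$ one uses that $(T(q), q):\underline{T}E \to \underline{T}M$ is a linear, hence additive \pref{par:cat_dbuns}, morphism of differential bundles, thanks to naturality of $p$ and $\ell$; combined with $T(q)H = \pi_2$ and $T(q)\mu = \pi_1 q\, 0_M$ (the latter from \ref{par:dbun}(4)) this yields the claim after applying the unit axiom for $+_M$. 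For $K\Psi$ I would invoke the additivity of $(K, q):\underline{T}E \to \underline{E}$ (axiom (C.2)) to distribute $K$ over $+_E$; the summand $KH\langle\pi_1, \pi_2\rangle$ reduces to $\pi_1 q\zeta$ by axiom (C.5), and the summand $K\mu\langle\pi_3, \pi_1\rangle$ reduces to $\pi_3$ by expanding the definition of $\mu$, using the additivity of $(K, p_M)$ from axiom (C.1) to move $K$ inside $T(\sigma)$, and then applying $K\lambda = 1_E$ (the retraction property), $K0_E = q\zeta$ (the zero-preservation piece of $(K, q)$ being a morphism of additive bundles), and finally the unit axiom for $\sigma$ to cancel the zero summand.

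The main obstacle is nothing deeper than careful bookkeeping of the three distinct addition operations in play — $+_E$ on $(TE, p_E)$, $+_M$ on $(TM, p_M)$, and $\sigma$ on $\underline{E}$ — together with the precise matching of the fibres over which each sum is taken. Once axioms (C.5) and (C.6) are complemented by the automatic additivity of the linear morphisms $(K, p_M)$, $(K, q)$, and $(T(q), q)$, the result follows by routine equational chasing.
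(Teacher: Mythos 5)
Your argument is correct, but note that the paper does not actually prove this proposition: it is recalled verbatim from Cockett and Cruttwell (\cite[Prop.\ 5.8]{CoCr:Conn}) and used as an input to the later results, so there is no in-paper proof to compare against. Judged on its own merits, your proof is the natural one and is sound: you exhibit the comparison map $\langle p_E, T(q), K\rangle$ into the fibre product $E \times_M TM \times_M E$ guaranteed by the Basic Condition and invert it by the morphism built from $H$ and $\mu$; the identity asserting that the composite $TE \rightarrow E\times_M TM\times_M E \rightarrow TE$ is $1_{TE}$ is exactly axiom (C.6) after distributing precomposition with $\Phi$ over the fibrewise sum, and the three componentwise identities for the other composite follow from the section property of $H$, axiom (C.5), the pullback square in \ref{par:dbun}(4), the retraction property, and the automatic additivity of the linear morphisms $(K,q)$, $(K,p_M)$, and $(T(q),q)$, exactly as you indicate. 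The only point to watch is notational: the paper composes diagrammatically, so your $p_E H$, $K\lambda$, $K0_E$, etc.\ should read $Hp_E$, $\lambda K$, $0_EK$, and so on; the underlying computations are unaffected.
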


We shall now show that this diagram underlies a biproduct of differential bundles $\underline{E} \oplus_M \underline{T}M \oplus_M \underline{E}$ whose first and second partial bundles are $\underline{T}E$ and $T\underline{E}$, respectively.  In section \ref{sec:cx_eq_form} we show that this leads to a useful equivalent formulation of connections in terms of just $K$, rather than both $K$ and $H$.  

\begin{LemSub}\label{thm:vcx_bipr1}
Let $K:TE \rightarrow E$ be a morphism in $\X$.  Then the following are equivalent:
\begin{enumerate}
\item The diagram \eqref{eq:cx_ld} is a fibre product diagram.
\item $TE$ underlies a biproduct of differential bundles
\begin{equation}\label{eq:lem_bipr}\underline{E} \oplus_M \underline{T}M \oplus_M \underline{E}\end{equation}
over $M$ with the following projections:
\begin{equation}\label{eq:3projs}\pi_1 = p_E:TE \rightarrow E,\;\;\pi_2 = T(q):TE \rightarrow TM,\;\;\pi_3 = K:TE \rightarrow E\end{equation}
\end{enumerate}
\end{LemSub}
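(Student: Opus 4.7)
The plan is to apply Corollary \ref{thm:u_bipr_dbun} to the three-element indexed family of differential bundles $(\underline{E_1},\underline{E_2},\underline{E_3}) = (\underline{E},\underline{T}M,\underline{E})$ over $M$, with candidate projections $(\pi_1,\pi_2,\pi_3) = (p_E,T(q),K)$. To invoke that corollary I first need to know that the triple product $\underline{E} \times_M \underline{T}M \times_M \underline{E}$ exists in $\DBun_M$. Since $\underline{E}$ satisfies Basic Condition \ref{par:std_assn}, Corollary \ref{thm:bc_iff_etm_has_fp} tells us that $\{\underline{E},\underline{T}M\}$ has finite products over $M$ in the sense of \ref{def:set_has_fp}, which is exactly the assertion that every finite indexed family whose terms are drawn from this set admits a product in $\DBun_M$; in particular, the required triple product exists.

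Granted this hypothesis, Corollary \ref{thm:u_bipr_dbun} reduces the equivalence to the following statement: $TE$ together with the morphisms $p_E$, $T(q)$, $K$ underlies a product of differential bundles $\underline{E} \times_M \underline{T}M \times_M \underline{E}$ with these as projections if and only if there exists a (necessarily unique) morphism $r:TE \to M$ in $\X$ presenting $(TE,r)$ as a fibre product over $M$ of the family $(q,p_M,q)$ by way of these three morphisms. But the latter is precisely the statement that \eqref{eq:cx_ld} is a fibre product diagram: by naturality of $p$ we have $p_E q = T(q) p_M$ automatically, and the commutativity required of \eqref{eq:cx_ld} also forces $Kq$ to equal this common composite, so the needed morphism $r$ is simply this common morphism $TE \to M$. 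Conversely, a fibre product structure on \eqref{eq:cx_ld} yields such an $r$ tautologically.

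The last observation needed is that, since $\DBun_M$ is additive by \ref{thm:dbunm_add}, a product in $\DBun_M$ is the same structure as a biproduct (\ref{par:add_cat}); so the product-of-differential-bundles statement supplied by \ref{thm:u_bipr_dbun} is equivalent to the biproduct formulation in (2). I do not anticipate a substantive obstacle: the lemma is essentially a direct consequence of the general machinery assembled in Sections 4 and \ref{sec:pbun} (specifically \ref{thm:prod_dbun} and \ref{thm:u_bipr_dbun}), and the only technical point is the verification of the Basic-Condition hypothesis via \ref{thm:bc_iff_etm_has_fp} so as to legitimately invoke that machinery for a family involving both $\underline{E}$ and $\underline{T}M$.
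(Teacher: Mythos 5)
Your proposal is correct and follows essentially the same route as the paper's own proof: invoke \ref{thm:bc_iff_etm_has_fp} to secure the existence of the product $\underline{E} \times_M \underline{T}M \times_M \underline{E}$ in $\DBun_M$, then apply \ref{thm:u_bipr_dbun} together with \ref{thm:dbunm_add} to translate between the fibre-product condition and the biproduct formulation. The paper states this more tersely, but your elaboration of why condition 2 of \ref{thm:u_bipr_dbun} matches the fibre product diagram \eqref{eq:cx_ld} is an accurate unpacking of the same argument.
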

\begin{proof}
By \ref{thm:bc_iff_etm_has_fp}, the set of differential bundles $\{\underline{E},\;\underline{T}M\}$ has finite products over $M$.  Therefore a product $\underline{E} \times_M \underline{T}M \times_M \underline{E}$ exists in $\DBun_M$, so this follows from \ref{thm:u_bipr_dbun} and \ref{thm:dbunm_add}.
\end{proof}

\begin{RemSub}\label{rem:kbp_un_ci}
Condition 2 in Lemma \ref{thm:vcx_bipr1} asserts the existence of a biproduct with certain further properties.  However, by \ref{thm:u_bipr_dbun}, a biproduct \eqref{eq:lem_bipr} with projections as in \eqref{eq:3projs} is unique up to \textit{concrete isomorphism} \pref{eq:ci}, if it exists.  Explicitly, if both $\underline{F}$ and $\underline{G}$ are biproducts of the form \eqref{eq:lem_bipr} and satisfy \eqref{eq:3projs}, then the objects $(F,r)$ and $(G,s)$ of $\X \slash M$ underlying $\underline{F}$ and $\underline{G}$ are identical, and the identity morphism on $F = G$ is an isomorphism $\underline{F} \cong \underline{G}$ in $\DBun_M$; further, the biproduct projections and injections associated to these biproducts $\underline{F},\underline{G}$ are identical.
\end{RemSub}

\begin{LemSub}\label{thm:vcx_bipr2}
Let $K:TE \rightarrow E$ be a retraction of the lift morphism $\lambda$ in $\X$, and suppose that $TE$ underlies a given biproduct of differential bundles $\underline{E} \oplus_M \underline{T}M \oplus_M \underline{E}$ with projections as given in \eqref{eq:3projs}.
\begin{enumerate}
\item $K$ is a vertical connection on $\underline{E}$ if and only if the first and second partial bundles of the given biproduct $\underline{E} \oplus_M \underline{T}M \oplus_M \underline{E}$ are $\underline{T}E$ and $T\underline{E}$, respectively.
\item If the equivalent conditions in 1 hold, then the injections of the given biproduct $\underline{E} \oplus_M \underline{T}M \oplus_M \underline{E}$ are as follows:
$$\iota_1 = 0_E:E \rightarrow TE,\;\;\iota_2 = T(\zeta):TM \rightarrow TE,\;\;\iota_3 = \lambda:E \rightarrow TE\;.$$
\end{enumerate}
\end{LemSub}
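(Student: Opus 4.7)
The plan is to use Proposition \ref{thm:par_bun}(1) to translate the partial-bundle conditions into the linear-morphism axioms (C.1) and (C.2), and then to use Proposition \ref{thm:par_bun}(3) together with uniqueness of biproduct injections to identify $\iota_1,\iota_2,\iota_3$. For (1), by Proposition \ref{thm:par_bun}(1) the first partial bundle of the given biproduct is characterized up to concrete isomorphism by the conditions that its underlying object in $\X \slash E$ be $\pi_1 = p_E$, and that both $(\pi_2,q) = (T(q),q)$ and $(\pi_3,q) = (K,q)$ be linear morphisms of differential bundles into $\underline{T}M$ and $\underline{E}$, respectively. The candidate $\underline{T}E$ tautologically satisfies the first condition, while $(T(q),q) : \underline{T}E \to \underline{T}M$ is linear by naturality of $p$ and $\ell$. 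Hence $\underline{T}E$ is the first partial bundle if and only if $(K,q) : \underline{T}E \to \underline{E}$ is linear, which is exactly (C.2). An analogous analysis for the second partial bundle---where the role played above by $(T(q),q)$ is played by $(p_E,p_M) : T\underline{E} \to \underline{E}$---shows that $T\underline{E}$ is the second partial bundle if and only if $(K,p_M) : T\underline{E} \to \underline{E}$ is linear, which is (C.1). Combined with the retraction hypothesis, this proves (1).

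For (2), Proposition \ref{thm:par_bun}(3) identifies the $j$-th injection of the biproduct with the zero section of the $j$-th partial bundle, which yields $\iota_1 = 0_E$ and $\iota_2 = T(\zeta)$ immediately from the identifications established in (1). To show $\iota_3 = \lambda$, rather than compute a third partial bundle directly, one can invoke the defining equations of the third injection in an additive category, namely $\iota_3 \pi_3 = 1_E$ and $\iota_3 \pi_i = 0$ for $i = 1,2$ (where the zeros are computed in the appropriate hom-sets of $\DBun_M$ as in the proof of Proposition \ref{thm:dbunm_add}). These unfold to $\iota_3 K = 1_E$, $\iota_3 p_E = q\zeta$, and $\iota_3 T(q) = q \cdot 0_M$, all of which are satisfied by $\lambda$: the first by the retraction hypothesis, and the remaining two by axioms \ref{par:dbun}(3) and \ref{par:dbun}(2) applied to $(\lambda,\zeta) : \underline{E} \to \underline{T}E$ and $(\lambda,0_M) : \underline{E} \to T\underline{E}$, respectively. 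Since the biproduct projections $(p_E, T(q), K)$ are jointly monic, $\iota_3$ is uniquely determined by these equations, so $\iota_3 = \lambda$.

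The main obstacle is the ``always linear'' claim for $(p_E,p_M) : T\underline{E} \to \underline{E}$ used in the analysis of the second partial bundle. Its nontrivial part is the identity $\lambda_{T\underline{E}}\, T(p_E) = p_E \lambda$, where $\lambda_{T\underline{E}} = T(\lambda) c_E$ by Example \ref{exa:te}. Using the fact that $(c_E, 1_{TE}) : T(\underline{T}E) \to \underline{T}(TE)$ is a morphism of additive bundles (cf.\ \ref{par:tcats}) together with $c_E c_E = 1$, one obtains $c_E T(p_E) = p_{TE}$, and naturality of $p$ applied to $\lambda$ then gives the desired equation.
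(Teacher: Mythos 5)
Your proposal is correct and follows essentially the same route as the paper's proof: reducing part (1) via Proposition \ref{thm:par_bun}(1) to the linearity of $(K,q)$ and $(K,p_M)$ after checking that $(T(q),q)$ and $(p_E,p_M)$ are always linear (the latter via $c_ET(p_E)=p_{TE}$ and naturality of $p$), and handling $\iota_3=\lambda$ in part (2) by the defining equations of the injection together with joint monicity of the projections. No gaps.
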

\begin{proof}
In order to prove 1, first recall that $\underline{T}E = (TE,p_E,+_E,0_E,\ell_E)$ is a differential bundle over $E$ whose underlying object of $\X \slash E$ is $(TE,p_E)$.  Hence we deduce by \ref{thm:par_bun} that $\underline{T}E$ is the first partial bundle of the given biproduct \eqref{eq:lem_bipr} if and only if the following conditions hold:
\begin{enumerate}
\item[(a)] $(T(q),q):\underline{T}E \rightarrow \underline{T}M$ is a linear morphism of differential bundles;
\item[(b)] $(K,q):\underline{T}E \rightarrow \underline{E}$ is a linear morphism of differential bundles.
\end{enumerate}
Similarly, we deduce by \ref{thm:par_bun} that $T\underline{E} = (TE,T(q),T(\sigma),T(\zeta),T(\lambda))$ is the second partial bundle of the biproduct \eqref{eq:lem_bipr} if and only if the following conditions hold:
\begin{enumerate}
\item[(c)] $(p_E,p_M):T\underline{E} \rightarrow \underline{E}$ is a linear morphism of differential bundles;
\item[(d)] $(K,p_M):T\underline{E} \rightarrow \underline{E}$ is a linear morphism of differential bundles.
\end{enumerate}
By definition, $K$ is a vertical connection if and only if (b) and (d) hold \pref{def:cx}.  Hence in order to prove 1 it suffices to show that (a) and (c) always hold.  Indeed, (a) holds as a consequence of the the naturality of $p$ and of $\ell$.  Also, (c) holds, by the naturality of $p$ and the fact that
$$
\xymatrix{
TE \ar[d]_{p_E} \ar[r]^{T(\lambda)} & T^2E \ar[r]^{c_E} \ar[d]_{p_{TE}} & T^2E \ar[d]^{T(p_E)}\\
E \ar[r]_{\lambda}                  & TE \ar@{=}[r]                     & TE 
}
$$ 
commutes, by \ref{par:tcats}(1,2) and the naturality of $p$.

Suppose that the equivalent conditions in 1 hold.  We deduce by \ref{thm:par_bun} that the injections $\iota_1$ and $\iota_2$ of the biproduct \eqref{eq:lem_bipr} are the zero sections of the first and second partial bundles $\underline{T}E$ and $T\underline{E}$, respectively, i.e. $\iota_1 = 0_E$ and $\iota_2 = T(\zeta)$.  The morphism $\iota_3:\underline{E} \rightarrow \underline{E} \oplus_M \underline{T}M \oplus_M \underline{E}$ is uniquely characterized by the equations $\iota_3\pi_1 = 0$, $\iota_3\pi_2 = 0$, and $\iota_3\pi_3 = 1_{\underline{E}}$, where we write $0$ to denote the zero morphisms $0:\underline{E} \rightarrow \underline{E}$ and $0:\underline{E} \rightarrow \underline{T}M$ in the additive category $\DBun_M$.  Hence, using explicit formulae for these zero morphisms, we find that the underlying morphism $\iota_3:E \rightarrow TE$ in $\X$ satisfies the equations $\iota_3p_E = q\zeta:E \rightarrow E$, $\iota_3T(q) = q0_M:E \rightarrow TM$, and $\iota_3K = 1_E:E \rightarrow E$.  But by \ref{par:dbun}(2,3) and the fact that $K$ is a retraction of $\lambda$, we know that $\lambda:E \rightarrow TE$ satisfies the analogous equations $\lambda p_E = q\zeta$, $\lambda T(q) = q0_M$, and $\lambda K = 1_E$.  Hence, since the projections $\pi_1 = p_E$, $\pi_2 = T(q)$, $\pi_3 = K$ are jointly monic in $\X$ (by \ref{par:fi_pr}), we deduce that $\iota_3 = \lambda$. 
\end{proof}

In view of \ref{thm:cx_fp}, the following is an immediate corollary to Lemmas \ref{thm:vcx_bipr1} and \ref{thm:vcx_bipr2}.

\begin{ThmSub}\label{thm:bipr_cx}
Let $(K,H)$ be a connection on $\underline{E}$.
\begin{enumerate}
\item $TE$ underlies a biproduct of differential bundles $\underline{E} \oplus_M \underline{T}M \oplus_M \underline{E}$ over $M$ whose projections and injections $\pi_i$, $\iota_i$ $(i = 1,2,3)$ are as follows (in ascending order\footnote{\label{fn:cx_ord}Note that here we have chosen a specific order in which to enumerate the three projections $p_E,T(q),K$.  The reasons for our choice are elaborated in Footnote \ref{fn:pr_ord_cx} with reference to Example \ref{exa:tb_ccx}.} by the index $i$, from left to right):
$$
\xymatrix{
  & & TE \ar@<-.5ex>[dll]_(.6){p_E} \ar@<-.5ex>[d]_(.6){T(q)} \ar@<.5ex>[drr]^(.6){K} & & \\
E \ar@<-.5ex>[urr]_(.4){0_E} & & TM \ar@<-.5ex>[u]_(.4){T(\zeta)} & & E. \ar@<.5ex>[ull]^(.4){\lambda}
}
$$
\item The first and second partial bundles of the biproduct $\underline{E} \oplus_M \underline{T}M \oplus_M \underline{E}$ in 1 are
$$\underline{T}E = (TE,p_E,+_E,0_E,\ell_E)\;\;\;\;\;\;\text{and}\;\;\;\;\;\;T\underline{E} = (TE,T(q),T(\sigma),T(\zeta),T(\lambda)),$$ 
respectively.
\end{enumerate}
\end{ThmSub}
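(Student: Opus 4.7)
The plan is to assemble Theorem \ref{thm:bipr_cx} by chaining together the three preceding results \ref{thm:cx_fp}, \ref{thm:vcx_bipr1}, and \ref{thm:vcx_bipr2}, together with the observation that by Definition \ref{def:cx} a connection $(K,H)$ contains a vertical connection $K$, which by definition is a retraction of $\lambda$. No new computation should be required; each needed verification has already been isolated in the lemmas.

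First, I would invoke \ref{thm:cx_fp} directly on the hypothesis that $(K,H)$ is a connection, which hands us that the diagram \eqref{eq:cx_ld} is a fibre product diagram in $\X$. Next, I would feed this into \ref{thm:vcx_bipr1}: its condition (1) is now established, so its condition (2) gives that $TE$ underlies a biproduct of differential bundles $\underline{E} \oplus_M \underline{T}M \oplus_M \underline{E}$ over $M$ whose three projections are $\pi_1 = p_E$, $\pi_2 = T(q)$, $\pi_3 = K$, as listed in the theorem's diagram. This already supplies the projection data in part (1).

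To obtain the partial bundles in part (2) and the injections in part (1), I would then apply \ref{thm:vcx_bipr2}. Its hypotheses are that $K$ is a retraction of $\lambda$ and that $TE$ underlies the biproduct just produced with the prescribed projections; both hold, the first because $K$ is a vertical connection by Definition \ref{def:cx}(1), and the second by the preceding paragraph. Since $K$ is a vertical connection, the ``only if'' direction of \ref{thm:vcx_bipr2}(1) yields that the first and second partial bundles of the biproduct are exactly $\underline{T}E$ and $T\underline{E}$, as claimed in part (2). Finally, \ref{thm:vcx_bipr2}(2) identifies the three injections as $\iota_1 = 0_E$, $\iota_2 = T(\zeta)$, $\iota_3 = \lambda$, completing the data displayed in part (1).

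There is essentially no obstacle here; the content of the theorem has been entirely preloaded into the two lemmas, and the role of Theorem \ref{thm:bipr_cx} is to record the consolidated statement. The only point that deserves explicit comment in the writeup is the ordering of the three factors, flagged by Footnote \ref{fn:cx_ord}: one must ensure that the projection labelled $\pi_3$ in \ref{thm:vcx_bipr1} is indeed $K$ and not $p_E$ or $T(q)$, so that the ``third injection is $\lambda$'' statement used later in the paper matches; this is simply a matter of quoting \ref{thm:vcx_bipr1} with the projections listed in the order $p_E, T(q), K$, which is the convention fixed there.
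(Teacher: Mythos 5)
Your proposal is correct and coincides with the paper's own argument: the paper explicitly presents Theorem \ref{thm:bipr_cx} as an immediate corollary of Lemmas \ref{thm:vcx_bipr1} and \ref{thm:vcx_bipr2} in view of Proposition \ref{thm:cx_fp}, which is exactly the chain you describe. Your added remarks on why the hypotheses of \ref{thm:vcx_bipr2} are met (that $K$, being a vertical connection, is a retraction of $\lambda$) and on the ordering convention for the projections are accurate and consistent with the paper.
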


Note that the conclusions of this theorem make no reference to the horizontal connection $H$ and, by Lemmas \ref{thm:vcx_bipr1} and \ref{thm:vcx_bipr2}, they hold for any vertical connection $K$ having a certain additional property.  We now make this explicit.

\begin{DefSub}
An \textbf{effective vertical connection} on $\underline{E}$ is a vertical connection $K:TE \rightarrow E$ on $\underline{E}$ such that \eqref{eq:cx_ld} is a fibre product diagram in $\X$.
\end{DefSub}

\begin{ThmSub}\label{thm:evcx_bipr}
If $K:TE \rightarrow E$ is an effective vertical connection on $\underline{E}$, then statements 1 and 2 of Theorem \ref{thm:bipr_cx} hold.
\end{ThmSub}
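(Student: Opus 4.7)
The proof is essentially an assembly of the two preceding lemmas, which were designed precisely to be combined in this way. The plan is as follows.

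First, I would unpack the hypothesis: by definition of \emph{effective vertical connection}, $K$ is a vertical connection on $\underline{E}$ (so in particular $K$ is a retraction of $\lambda$ satisfying axioms (C.1) and (C.2)), and moreover the diagram \eqref{eq:cx_ld} is a fibre product diagram in $\X$. Then I would invoke Lemma \ref{thm:vcx_bipr1}, whose two equivalent conditions allow us to upgrade the fact that \eqref{eq:cx_ld} is a fibre product diagram to the stronger statement that $TE$ underlies a biproduct of differential bundles $\underline{E} \oplus_M \underline{T}M \oplus_M \underline{E}$ over $M$ with projections $\pi_1 = p_E$, $\pi_2 = T(q)$, $\pi_3 = K$.

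Next, since $K$ is a retraction of $\lambda$ and a vertical connection, I would apply Lemma \ref{thm:vcx_bipr2}(1) to this biproduct in order to conclude that its first and second partial bundles are $\underline{T}E$ and $T\underline{E}$, respectively; this yields statement 2 of Theorem \ref{thm:bipr_cx}. Finally, I would apply Lemma \ref{thm:vcx_bipr2}(2) to identify the three injections as $\iota_1 = 0_E$, $\iota_2 = T(\zeta)$, and $\iota_3 = \lambda$, which together with the projections identified above gives statement 1 of Theorem \ref{thm:bipr_cx}.

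There is no genuine obstacle here, since the conceptual and computational work has already been carried out in Lemmas \ref{thm:vcx_bipr1} and \ref{thm:vcx_bipr2}; the only thing to notice is that the hypothesis of effectiveness supplies exactly what is needed to trigger condition (1) of Lemma \ref{thm:vcx_bipr1}, while the hypothesis of being a vertical connection supplies exactly conditions (b) and (d) needed in the proof of Lemma \ref{thm:vcx_bipr2}(1). The theorem statement is therefore a direct corollary, requiring only a one- or two-sentence proof citing these two lemmas in sequence.
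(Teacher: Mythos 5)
Your proposal is correct and follows exactly the route the paper intends: the paper gives no separate proof of this theorem, instead noting just before its statement that the conclusions ``by Lemmas \ref{thm:vcx_bipr1} and \ref{thm:vcx_bipr2} \dots hold for any vertical connection $K$ having a certain additional property,'' which is precisely your assembly of Lemma \ref{thm:vcx_bipr1} (effectiveness yields the biproduct with projections $p_E$, $T(q)$, $K$) followed by Lemma \ref{thm:vcx_bipr2} (the vertical-connection axioms yield the partial bundles, and then the injections). Nothing is missing.
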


\begin{DefSub}
We call the differential bundle 
$$\underline{E} \oplus_M \underline{T}M \oplus_M \underline{E} = (TE,\widehat{q},\widehat{\sigma},\widehat{\zeta},\widehat{\lambda})$$
in Theorem \ref{thm:bipr_cx} the \textbf{total bundle} for the given connection.  Unlike $\underline{T}E$ and $T\underline{E}$, the total bundle is a differential bundle over $M$, and its underlying object of $\X \slash M$ is the composite $\widehat{q} = p_Eq:TE \rightarrow M$.  We call $\widehat{\sigma}$ the \textbf{total addition} on $TE$, and we call $\widehat{\zeta}$ and $\widehat{\lambda}$ the \textbf{total zero} and \textbf{total lift} on $TE$, respectively.  
\end{DefSub}

By \ref{thm:bipr_cx}, an affine connection on $M$ induces a `coordinatization' of the second tangent bundle $T^2M$ as a three-fold biproduct $\underline{T}M \oplus_M \underline{T}M \oplus_M \underline{T}M$ over $M$.  The following example illustrates how this coordinatization generalizes the usual coordinate description of the second tangent bundle of a finite-dimensional vector space \pref{exa:sman}:

\begin{ExaSub}[\textbf{The total bundle for the canonical connection}]\label{exa:tb_ccx}
In Example \ref{exa:can_aff_cx}, we considered the case where $\X = \Mf$, $M = V = \RR^n$, and $\underline{E} = \underline{T}V$, and we examined a connection $(K,H)$ on $\underline{T}V = (TV,p_V,+_V,0_V,\ell_V)$, namely the \textit{canonical affine connection on $V$}.  Here $TV = V^2$, and $p_V = \pi_1:V^2 \rightarrow V$ \pref{exa:sman}.  By \ref{thm:bipr_cx}, the connection $(K,H)$ equips $T^2V = V^4$ with the structure of a biproduct of differential bundles
\begin{equation}\label{eq:bipr_ccx}\underline{T}V \oplus_V \underline{T}V \oplus_V \underline{T}V = (T^2V,\widehat{p_V},\widehat{+_V},\widehat{0_V},\widehat{\ell_V})\end{equation}
over $V$ with projection morphisms\footnote{\label{fn:pr_ord_cx}Recall that in \ref{thm:bipr_cx} we chose a specific order for the three projections in question (Footnote \ref{fn:cx_ord}).  In fact, we chose this order so that in the present example the successive projections capture the top-three coordinates $\pi_2,\pi_3,\pi_4$ of $T^2V = V^4$ in ascending order.  Per our convention in \ref{exa:sman}, the highest-order component of $T^2V = V^4$ appears in the rightmost factor, and so we have adopted a similar convention for the biproduct \eqref{eq:bipr_ccx}.}
\begin{equation}\label{eq:exa_pr}
\begin{array}{rcccl}
\pi_1 = p_{TV} = \langle \pi_1,\pi_2\rangle & \;:\; & T^2V = V^4 & \longrightarrow & V^2 = TV\\
\pi_2 = T(p_V) = \langle \pi_1,\pi_3\rangle & \;:\; & T^2V = V^4 & \longrightarrow & V^2 = TV\\
\pi_3 = K = \langle \pi_1,\pi_4\rangle & \;:\; & T^2V = V^4 & \longrightarrow & V^2 = TV,
\end{array}
\end{equation}
for which the given explicit formulae were developed in \ref{exa:sman} and \ref{exa:can_aff_cx}.
The total addition is the map
$$\widehat{+_V}\;:\;V^4 \times_V V^4 \rightarrow V^4$$
that sends a pair of 4-tuples of the form $(x,t_1,u_1,v_1)$, $(x,t_2,u_2,v_2)$ to the 4-tuple $(x,t_1+t_2,u_1+u_2,v_1+v_2)$.  The total zero $\widehat{0_V}:V \rightarrow V^4$ is given by $x \mapsto (x,0,0,0)$.

In order to characterize the total lift $\widehat{\ell_V}$, let us first describe the maps 
$$T(p_{TV}),T^2(p_V),T(K)\;:\;T^3V \rightarrow T^2V$$
obtained by applying $T$ to the projections in \eqref{eq:exa_pr}.  Concretely, we can form the tangent bundle $T^3V = T(V^4)$ of $T^2V = V^4$ as the binary product $T^3V = V^4 \times V^4$, with its first projection to $V^4$ as $p_{T^2V}$.  Since $T^2V = V^4$ and $TV = V^2$ are finite dimensional $\RR$-vector spaces, and the maps in \eqref{eq:exa_pr} are $\RR$-linear, it follows that
$$
\begin{array}{rcccl}
T(p_{TV}) = \langle \pi_1,\pi_2\rangle \times \langle \pi_1,\pi_2\rangle & \;:\; & V^4 \times V^4 & \longrightarrow & V^2 \times V^2 = V^4\\
T^2(p_V) = \langle \pi_1,\pi_3\rangle \times \langle \pi_1,\pi_3\rangle  & \;:\; & V^4 \times V^4 & \longrightarrow & V^2 \times V^2 = V^4\\
T(K) = \langle \pi_1,\pi_4\rangle \times \langle \pi_1,\pi_4\rangle & \;:\; & V^4 \times V^4 & \longrightarrow & V^2 \times V^2 = V^4.
\end{array}
$$
Further, since the projections 
$$p_{TV},T(p_V),K\;\;\;:\;\;(T^2V,\widehat{p_V},\widehat{+_V},\widehat{0_V},\widehat{\ell_V}) \longrightarrow (TV,p_V,+_V,0_V,\ell_V)$$
are linear morphisms of differential bundles, we know that the diagrams
$$
\xymatrix{
V^4 \ar[d]_{p_{TV}} \ar[r]^(.4){\widehat{\ell_V}} & V^4 \times V^4 \ar[d]_{T(p_{TV})} & V^4 \ar[d]_{T(p_V)} \ar[r]^(.4){\widehat{\ell_V}} & V^4 \times V^4 \ar[d]^{T^2(p_V)} & V^4 \ar[d]_{K} \ar[r]^(.4){\widehat{\ell_V}} & V^4 \times V^4 \ar[d]^{T(K)}\\
V^2 \ar[r]_{\ell_V}                           & V^4                               & V^2 \ar[r]_{\ell_V}                           & V^4                              & V^2 \ar[r]_{\ell_V}                           & V^4                              
}
$$
commute.  Using these facts, it is now straightforward to show that the total lift is given by
$$\widehat{\ell_V}\;:\;V^4 \rightarrow V^4 \times V^4,\;\;\;\;\;(x,t,u,v) \mapsto ((x,0,0,0),(0,t,u,v))\;.$$
\end{ExaSub}

\section{The induced horizontal connection}

In the present section, we show that for any effective vertical connection $K$ there is a unique horizontal connection $H$ such that $(K,H)$ is a connection.  Again let $\underline{E}$ be a differential bundle over $M$, satisfying Basic Condition \ref{par:std_assn}.  By Theorem \ref{thm:evcx_bipr}, we know that any effective vertical connection $K$ on $\underline{E}$ endows $TE$ with the structure of a biproduct of differential bundles
\begin{equation}\label{eq:bipr_evcx}\underline{E} \oplus_M \underline{T}M \oplus_M \underline{E},\end{equation}
and we will now show that the canonical injection of $\underline{E} \oplus_M \underline{T}M$ into the latter biproduct is a horizontal connection $H$ on $\underline{E}$.  To this end, we will employ the biproduct decomposition \eqref{eq:bipr_evcx} in order to re-interpret the axioms for a horizontal connection $H$ (\ref{def:cx}(2)), and also the compatibility axioms between $K$ and $H$ (\ref{def:cx}(3)).

Our analysis will be rendered more tractable by working with three arbitrary differential bundles $\underline{F_1}$, $\underline{F_2}$, $\underline{F_3}$ over $M$, but the reader is urged to keep in mind the case where these bundles are $\underline{E}$, $\underline{T}M$, $\underline{E}$, respectively.

\begin{ParSub}[\textbf{Notation}]\label{par:three_db}
Let $\underline{F_1}$, $\underline{F_2}$, $\underline{F_3}$ be differential bundles over $M$, and write $\underline{F_i} = (F_i,r_i,\sigma_i,\zeta_i,\lambda_i)$, $i = 1,2,3$.  Assume that the set of differential bundles $\{\underline{F_1},\underline{F_2},\underline{F_3}\}$ has finite products over $M$ \pref{def:set_has_fp}.  Let 
$$\underline{F} = \underline{F_1} \oplus_M \underline{F_2} \oplus_M \underline{F_3}$$
be a biproduct of differential bundles over $M$, and let $(F,r)$ denote the object of $\X \slash M$ underlying $\underline{F}$, so that $F$ is a fibre product $F_1 \times_M F_2 \times_M F_3$ in $\X$ \pref{thm:dbun_pr_fp}.  Given any enumeration $i,j,k$ of all the elements of $\{1,2,3\}$, we write
$$\iota_{ij}\;:\;\underline{F_i} \oplus_M \underline{F_j} \longrightarrow \underline{F_1} \oplus_M \underline{F_2} \oplus_M \underline{F_3}$$
to denote the canonical \textit{injection}, defined by the equations $\iota_{ij}\pi_i = \pi_1$, $\iota_{ij}\pi_j = \pi_2$, and $\iota_{ij}\pi_k = 0$, where $0$ denotes the relevant zero morphism in the additive category $\DBun_M$.  Also, let us write
$$\pi_{ij} = \langle \pi_i,\pi_j \rangle\;:\;\underline{F_1} \oplus_M \underline{F_2} \oplus_M \underline{F_3} \longrightarrow \underline{F_i} \oplus_M \underline{F_j}$$
to denote the canonical \textit{projection}.

Whereas the object $F$ of $\X$ underlies the differential bundle $\underline{F}$ over $M$, we deduce by \ref{def:jth_pb} that $F$ also carries the structure of a differential bundle over $F_j$ for each $j = 1,2,3$, namely the $j$-th partial bundle of the biproduct $\underline{F} = \underline{F_1} \oplus_M \underline{F_2} \oplus_M \underline{F_3}$.  Writing the elements of the complement $\{1,2,3\}\backslash\{j\}$ as $i,k$, we can express the $j$-th partial bundle of the latter biproduct as
$$r_j^*(\underline{F_i} \oplus_M \underline{F_k}) = (F,\pi_j,\sigma^j,\zeta^j,\lambda^j),$$
thus employing the notational conventions of \S\ref{sec:pbun}.
\end{ParSub}

\begin{LemSub}\label{thm:inj_lem}
Suppose we are given data as in \ref{par:three_db}.  Then the injection 
$$\iota_{12}\;:\;\underline{F_1} \oplus_M \underline{F_2} \longrightarrow \underline{F_1} \oplus_M \underline{F_2} \oplus_M \underline{F_3}$$
is the unique morphism $H:F_1 \times_M F_2 \rightarrow F$ in $\X$ satisfying the following conditions:
\begin{enumerate}
\item $H:r_1^*(\underline{F_2}) \rightarrow r_1^*(\underline{F_2} \oplus_M \underline{F_3})$ is a linear morphism of differential bundles over $F_1$, where $r_1^*(\underline{F_2})$ and $r_1^*(\underline{F_2} \oplus_M \underline{F_3})$ denote the first partial bundles of the biproducts $\underline{F_1} \oplus_M \underline{F_2}$ and $\underline{F} = \underline{F_1} \oplus_M \underline{F_2} \oplus_M \underline{F_3}$, respectively.
\item $H:r_2^*(\underline{F_1}) \rightarrow r_2^*(\underline{F_1} \oplus_M \underline{F_3})$ is a linear morphism of differential bundles over $F_2$, where $r_2^*(\underline{F_1})$ and $r_2^*(\underline{F_1} \oplus_M \underline{F_3})$ denote the second partial bundles of the biproducts $\underline{F_1} \oplus_M \underline{F_2}$ and $\underline{F} = \underline{F_1} \oplus_M \underline{F_2} \oplus_M \underline{F_3}$, respectively.
\item The following diagram commutes
\begin{equation}\label{eq:pi3_diagr}
\xymatrix{
F_1 \times_M F_2 \ar[d]_{\textnormal{proj}} \ar[r]^(.6)H & F \ar[d]^{\pi_3}\\
M \ar[r]_{\zeta_3}        & F_3
}
\end{equation}
where $\textnormal{proj}$ denotes the projection.
\end{enumerate}
\end{LemSub}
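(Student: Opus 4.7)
My approach is to prove uniqueness first, by extracting the composites $H\pi_1$, $H\pi_2$, $H\pi_3$ from conditions 1--3 and using joint monicity of the projections $\pi_1,\pi_2,\pi_3$, and then to verify that $\iota_{12}$ satisfies those three conditions. Throughout, I write $p_1, p_2 \colon F_1\times_M F_2 \to F_i$ ($i = 1, 2$) for the projections of the binary fibre product, with common structure morphism $\textnormal{proj} = p_1 r_1 = p_2 r_2 \colon F_1\times_M F_2 \to M$. For uniqueness, suppose $H\colon F_1\times_M F_2 \to F$ satisfies conditions 1--3. By Prop \ref{thm:par_bun}(1)(a), the object of $\X\slash F_1$ underlying $r_1^*(\underline{F_2})$ is $(F_1\times_M F_2, p_1)$, while that underlying $r_1^*(\underline{F_2}\oplus_M\underline{F_3})$ is $(F,\pi_1)$, so condition 1 forces $H\pi_1 = p_1$. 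By symmetry, condition 2 gives $H\pi_2 = p_2$, and condition 3 reads $H\pi_3 = \textnormal{proj}\cdot \zeta_3$. Since $(\pi_i)_{i=1,2,3}$ is jointly monic in $\X$ (as projections of a fibre product \pref{par:fi_pr}), $H$ is uniquely determined.

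For existence, I verify that $\iota_{12}$ satisfies the three conditions. Condition 3 is immediate: by the defining equations in \ref{par:three_db}, $\iota_{12}\pi_3$ is the zero morphism in $\DBun_M(\underline{F_1}\oplus_M\underline{F_2},\underline{F_3})$, whose underlying morphism in $\X$ is $\textnormal{proj}\cdot\zeta_3$. For condition 1, per \ref{par:cat_dbuns} it suffices to verify the lift equation $\iota_{12}\widehat{\lambda}^1 = \lambda^1 T(\iota_{12})$, where $\lambda^1$ and $\widehat{\lambda}^1$ denote the lifts of the first partial bundles $r_1^*(\underline{F_2})$ and $r_1^*(\underline{F_2}\oplus_M\underline{F_3})$, respectively. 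Because the fibre product $F$ is tangential (by \ref{par:dbun}(1) together with \ref{thm:prod_dbun}), the morphisms $T(\pi_i)$ ($i=1,2,3$) are jointly monic, so I compare both sides after composition with each. The $T(\pi_1)$ and $T(\pi_2)$ cases reduce immediately by the equations characterizing $\lambda^1$ and $\widehat{\lambda}^1$ furnished by Prop \ref{thm:par_bun}(2), together with the identities $\iota_{12}\pi_i = p_i$ ($i=1,2$) established in the uniqueness argument. The $T(\pi_3)$ case reduces both sides to $\textnormal{proj}\cdot 0_M\cdot T(\zeta_3)$, combining $\iota_{12}\pi_3 = \textnormal{proj}\cdot\zeta_3$, the identity $\zeta_3\lambda_3 = 0_M\cdot T(\zeta_3)$ extracted from axiom \ref{par:dbun}(2), and the naturality of $0$. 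Condition 2 follows by the symmetric argument with indices 1 and 2 interchanged.

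The main obstacle is the linearity step in conditions 1 and 2: while the $T(\pi_1)$ and $T(\pi_2)$ composites follow routinely from partial bundle identities, the $T(\pi_3)$ composite requires combining the additive bundle axiom \ref{par:dbun}(2) for $\lambda_3$ with naturality of $0$, rather than reducing to a single partial bundle equation. An alternative, more conceptual route would be to recognise $\iota_{12}$ as the cartesian factorisation through $r_1^*(\underline{F_2}\oplus_M\underline{F_3})$ of the composite $r_1^*(\underline{F_2}) \to \underline{F_2} \xrightarrow{\iota_2} \underline{F_2}\oplus_M\underline{F_3}$ over $r_1$; but this demands additional setup concerning functoriality of pullback that is not formally developed in the excerpt, so I prefer the direct verification above.
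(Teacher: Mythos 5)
Your proof is correct. The uniqueness half coincides with the paper's: both arguments extract $H\pi_1 = p_1$ and $H\pi_2 = p_2$ from the bundle-projection compatibility built into conditions 1 and 2, read off $H\pi_3 = \textnormal{proj}\cdot\zeta_3$ from condition 3, and conclude by joint monicity of $\pi_1,\pi_2,\pi_3$ \pref{par:fi_pr}. For existence, however, you take a genuinely different route: the paper uses precisely the ``conceptual'' argument that you mention and then set aside. Namely, by \ref{thm:pb_dbun} the arrows $r_1^*(\underline{F_2})\rightarrow\underline{F_2}$ and $r_1^*(\underline{F_2}\oplus_M\underline{F_3})\rightarrow\underline{F_2}\oplus_M\underline{F_3}$ are \emph{cartesian} over $r_1$, so the injection $\kappa=\langle 1,0\rangle:\underline{F_2}\rightarrow\underline{F_2}\oplus_M\underline{F_3}$ lifts to a morphism $r_1^*(\kappa)$ in $\DBun_{F_1}$ whose underlying morphism in $\X$ is $F_1\times_M\kappa=\iota_{12}$; condition 1 then holds automatically because $r_1^*(\kappa)$ is by construction a morphism of $\DBun_{F_1}$, and no extra ``functoriality of pullback'' beyond the cartesianness already asserted in \ref{thm:pb_dbun} is needed. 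Your direct verification --- testing the lift equation against the jointly monic family $T(\pi_1),T(\pi_2),T(\pi_3)$ (legitimate since the fibre product $F$ is tangential), with the $T(\pi_3)$ component handled via $\iota_{12}\pi_3=\textnormal{proj}\cdot\zeta_3$, the identity $\zeta_3\lambda_3=0_MT(\zeta_3)$ from \ref{par:dbun}(2), and naturality of $0$ --- checks out and is a valid, self-contained substitute resting only on \ref{thm:par_bun}(2); what it costs in computation, the paper's route saves by making conditions 1 and 2 hold for structural reasons rather than by agreement of formulas. One small slip of phrasing: the identities $\iota_{12}\pi_i=p_i$ ($i=1,2$) that you invoke in the existence step are not ``established in the uniqueness argument'' (which concerns an arbitrary $H$ satisfying the conditions) but follow directly from the defining equations of $\iota_{12}$ in \ref{par:three_db}; this does not affect correctness.
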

\begin{proof}
For the needed uniqueness, suppose that $H$ satisfies the given conditions.  Then conditions 1 and 2 entail that the diagram
$$
\xymatrix{
F_1 \times_M F_2 \ar[dr]_{\pi_i} \ar[r]^(.6)H & F \ar[d]^{\pi_i}\\
                                         & F_i
}
$$
commutes for $i = 1,2$.  Also, the counterclockwise composite in \eqref{eq:pi3_diagr} is the zero morphism $0:\underline{F_1} \oplus_M \underline{F_2} \rightarrow \underline{F_3}$ in $\DBun_M$, so since the projections $\pi_1,\pi_2,\pi_3$ are jointly monic in $\X$ \pref{par:fi_pr} we deduce that $H = \iota_{12}$.

Next, letting $H = \iota_{12}$, we show that $H$ satisfies the given conditions.  Let 
$$\kappa = \langle 1,0\rangle:\underline{F_2} \rightarrow \underline{F_2} \oplus_M \underline{F_3}$$
denote the injection morphism in $\DBun_M$.  By \ref{thm:pb_dbun}, there are cartesian arrows 
$$r_1^*(\underline{F_2}) \longrightarrow \underline{F_2},\;\;\;\;\;\;\;\;r_1^*(\underline{F_2} \oplus_M \underline{F_3}) \longrightarrow \underline{F_2} \oplus_M \underline{F_3}$$
over $r_1:F_1 \rightarrow M$ in $\DBun$, so the injection $\kappa$ induces a morphism
$$r_1^*(\kappa)\;:\;r_1^*(\underline{F_2}) \longrightarrow r_1^*(\underline{F_2} \oplus_M \underline{F_3})$$
in $\DBun_{F_1}$.  Explicitly, the morphism in $\X$ underlying $r_1^*(\kappa)$ is
$$F_1 \times_M \kappa\;:\;F_1 \times_M F_2 \rightarrow F_1 \times_M F_2 \times_M F_3\;.$$
Hence $r_1^*(\kappa) = H$, so 1 holds.  A similar argument shows that 2 holds.  Further, 3 clearly holds.
\end{proof}

\begin{ThmSub}\label{thm:hcx_for_evcx}
Let $K:TE \rightarrow E$ be an effective vertical connection on $\underline{E} = (E,q,\sigma,\zeta,\lambda)$.  Then $TE$ underlies a biproduct $\underline{E} \oplus_M \underline{T}M \oplus_M \underline{E}$ whose associated injection
$$\iota_{12}\;:\;\underline{E} \oplus_M \underline{T}M \longrightarrow \underline{E} \oplus_M \underline{T}M \oplus_M \underline{E}$$
is a horizontal connection
$$H\;:\;E \times_M TM \longrightarrow TE$$
on $\underline{E}$.  Further, $H$ is the unique horizontal connection on $\underline{E}$ such that $(K,H)$ satisfies axiom \textnormal{\ref{def:cx}(C.5)}.
\end{ThmSub}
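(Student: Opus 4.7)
The plan is to define $H$ via the biproduct supplied by Theorem \ref{thm:evcx_bipr} and then transfer the characterization of the injection $\iota_{12}$ provided by Lemma \ref{thm:inj_lem}, with $(\underline{F_1},\underline{F_2},\underline{F_3}) := (\underline{E},\underline{T}M,\underline{E})$, into the defining axioms for a horizontal connection. Since $K$ is effective, Theorem \ref{thm:evcx_bipr} endows $TE$ with the biproduct structure $\underline{E} \oplus_M \underline{T}M \oplus_M \underline{E}$, with projections $p_E$, $T(q)$, $K$; I would set $H := \iota_{12}$, a morphism $E \times_M TM \to TE$ in $\X$.

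Next I would pin down the various partial bundles involved. By Definition \ref{def:jth_pb}, the first partial bundle of the binary biproduct $\underline{E} \oplus_M \underline{T}M$ is $q^*(\underline{T}M)$ and its second partial bundle is $p_M^*(\underline{E})$. By Theorem \ref{thm:bipr_cx}(2), the first and second partial bundles of the ternary biproduct $\underline{E} \oplus_M \underline{T}M \oplus_M \underline{E}$ are $\underline{T}E$ and $T\underline{E}$, respectively. With these identifications, conditions 1 and 2 of Lemma \ref{thm:inj_lem} become exactly axioms (C.3) and (C.4) for $H$, while condition 3 reads $H\pi_3 = \text{proj}\cdot\zeta_3$, which in the present situation is precisely $HK = \pi_1 q \zeta$, i.e.\ axiom (C.5). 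The section condition $HU = 1$ for $H = \iota_{12}$ is an immediate consequence of the linear-morphism conditions 1 and 2 of the lemma, since any morphism in $\DBun_E$ (resp.\ $\DBun_{TM}$) commutes with the canonical projection to $E$ (resp.\ $TM$), forcing $Hp_E = \pi_1$ and $HT(q) = \pi_2$. Hence $H$ is a horizontal connection on $\underline{E}$ and $(K,H)$ satisfies (C.5).

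For uniqueness, suppose $H'$ is any horizontal connection on $\underline{E}$ such that $(K,H')$ satisfies (C.5). The section condition and axioms (C.3), (C.4), (C.5) for $H'$ translate, under the identifications above, into conditions 1, 2, 3 of Lemma \ref{thm:inj_lem}; the uniqueness clause of that lemma then gives $H' = \iota_{12} = H$.

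The principal obstacle is purely bookkeeping: one must verify that the partial-bundle structures arising from the binary and ternary biproducts coincide with the bundles $q^*(\underline{T}M)$, $p_M^*(\underline{E})$, $\underline{T}E$, and $T\underline{E}$ named in Definition \ref{def:cx}, and that the zero section $\zeta_3$ appearing in condition 3 of Lemma \ref{thm:inj_lem} is indeed $\zeta$. Both points are immediate from Definition \ref{def:jth_pb} and Theorem \ref{thm:bipr_cx}(2), so the proof reduces to a direct invocation of the lemma with no substantive calculation remaining.
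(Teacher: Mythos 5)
Your proposal is correct and follows essentially the same route as the paper's own proof: both invoke Theorem \ref{thm:evcx_bipr} to obtain the biproduct, identify the relevant partial bundles so that conditions 1--3 of Lemma \ref{thm:inj_lem} become axioms (C.3), (C.4), (C.5), and then read off existence and uniqueness from that lemma. Your slightly more explicit justification of the section condition $HU = 1$ (via the structure maps of the partial bundles) is a fine elaboration of what the paper dismisses as clear.
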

\begin{proof}
By \ref{thm:evcx_bipr} we know that $TE$ underlies a biproduct $\underline{E} \oplus_M \underline{T}M \oplus_M \underline{E}$ whose first and second partial bundles are $\underline{T}E$ and $T\underline{E}$, respectively, and whose projections are $\pi_1 = p_E$, $\pi_2 = T(q)$, $\pi_3 = K$.  Letting $\underline{F_1} = \underline{E}$, $\underline{F_2} = \underline{T}M$, $\underline{F_3} = \underline{E}$, and employing the notation of \ref{par:three_db} and \ref{thm:inj_lem}, we find that the partial bundles appearing in Lemma \ref{thm:inj_lem} are
$$r_1^*(\underline{F_2}) = q^*(\underline{T}M),\;\;\;\;r_1^*(\underline{F_2} \oplus_M \underline{F_3}) = \underline{T}E$$
$$r_2^*(\underline{F_1}) = p_M^*(\underline{E}),\;\;\;\;r_2^*(\underline{F_1} \oplus_M \underline{F_3}) = T\underline{E}\;.$$
Therefore, given an arbitrary morphism $H:E \times_M TM \rightarrow TE$ in $\X$, we find that conditions 1, 2, 3 in \ref{thm:inj_lem} are equivalent to axioms C.3, C.4, C.5 of \ref{def:cx}, respectively.  Hence we deduce by \ref{thm:inj_lem} that the injection $\iota_{12}$ is the unique morphism $H$ satisfying axioms C.3, C.4, C.5.  This morphism $H = \iota_{12}$ is clearly a section of the projection morphism $\pi_{12} = \langle p_E,T(q)\rangle:TE \rightarrow E \times_M TM$ and hence is a horizontal connection on $\underline{E}$.
\end{proof}

Our next objective is to show that the horizontal connection $H$ that we obtained in \ref{thm:hcx_for_evcx} is \textit{compatible} with the given effective vertical connection $K$, in the sense that the pair $(K,H)$ is a connection.  The challenge in this regard will be to address the `additive' axiom \ref{def:cx}(C.6), and so we now return to the abstract context of \ref{par:three_db} with the aim of reinterpreting this axiom.

\begin{ParSub}\label{par:a_pb}
Again suppose we are given data as in \ref{par:three_db}.  Let $j \in \{1,2,3\}$, and write the elements of the complement $\{1,2,3\} \slash \{j\}$ as $i,k$.  The endomorphisms $\pi_i\iota_i,\pi_k\iota_k:\underline{F} \rightarrow \underline{F}$ associated to the biproduct $\underline{F} = \underline{F_1} \oplus_M \underline{F_2} \oplus_M \underline{F_3}$ satisfy the equations
\begin{equation}\label{eq:endos_pij}\pi_i\iota_i\pi_j = 0 = \pi_k\iota_k\pi_j\;:\;\underline{F} \rightarrow \underline{F_j}\;.\end{equation}
Hence by \ref{par:add_jpb} we can consider their sum
$$\pi_i\iota_i +^j \pi_k\iota_k\;:\;F \rightarrow F$$
under the operation $+^j$ of addition with respect to the $j$-th partial bundle of the above biproduct.  We deduce the following:
\end{ParSub}

\begin{LemSub}\label{thm:inj_add}
In the situation of \ref{par:a_pb}, $\pi_i\iota_i +^j \pi_k\iota_k$ is precisely the composite
$$\xymatrix{\underline{F_1} \oplus_M \underline{F_2} \oplus_M \underline{F_3} \ar[r]^(.6){\pi_{ik}} & \underline{F_i} \oplus_M \underline{F_k} \ar[r]^(.4){\iota_{ik}} & \underline{F_1} \oplus_M \underline{F_2} \oplus_M \underline{F_3}}$$
of the projection $\pi_{ik}$ and the injection $\iota_{ik}$ defined in \ref{par:three_db}.
\end{LemSub}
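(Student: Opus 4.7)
The plan is to compare the two endomorphisms $\pi_i\iota_i +^j \pi_k\iota_k$ and $\pi_{ik}\iota_{ik}$ of $F$ coordinate-wise, via their projections onto $F_1$, $F_2$, $F_3$, and then conclude by joint monicity of the projections of the fibre product $F = F_1 \times_M F_2 \times_M F_3$ in $\X$ \pref{par:fi_pr}. Both sides of the asserted equation are morphisms in $\X$, so I do not need to work in $\DBun_M$, and the argument reduces to unpacking definitions against the biproduct identities.

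First I would check that the sum $\pi_i\iota_i +^j \pi_k\iota_k$ is well-defined, i.e.\ that both summands lie in the hom-monoid $(\X \slash F_j)((F, \pi_j), (F, \pi_j))$ under the operation $+^j$. This is precisely equation \eqref{eq:endos_pij}, which records the vanishing of their $j$-th projections. Next I would invoke Proposition \ref{thm:addn_pb}, with $X = F$ and the three projections $\pi_1, \pi_2, \pi_3$, to compute the projections of the sum: for each $l \in \{i,k\}$, the $l$-th projection of $\pi_i\iota_i +^j \pi_k\iota_k$ is the sum (in the hom-monoid over $M$) of the $l$-th projections of the two summands, whereas the $j$-th projection is the common $j$-th projection, namely zero. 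The standard biproduct identities $\iota_m \pi_n = 1_{F_m}$ when $m = n$ and $\iota_m \pi_n = 0$ otherwise then immediately yield $\pi_i$ as the $i$-th projection of the sum, $\pi_k$ as its $k$-th, and the zero morphism as its $j$-th.

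Finally I would read off the projections of $\pi_{ik}\iota_{ik}$ directly from the defining equations of $\iota_{ik}$ and $\pi_{ik}$ given in \ref{par:three_db}: $\iota_{ik}\pi_i = \pi_1$, $\iota_{ik}\pi_k = \pi_2$, and $\iota_{ik}\pi_j = 0$, so precomposition with $\pi_{ik} = \langle \pi_i, \pi_k \rangle$ yields the three projections $\pi_i$, $\pi_k$, $0$. Since the two endomorphisms agree on all three projections of $F$, joint monicity closes the argument.

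The proof is essentially bookkeeping and I expect no genuine obstacle. The one subtlety worth flagging is that the addition $+^j$ is the commutative-monoid operation \emph{over $F_j$} supplied by the $j$-th partial bundle, not the hom-monoid addition over $M$ in $\DBun_M$; so it is important to apply Proposition \ref{thm:addn_pb} (which expresses $+^j$ coordinate-wise in terms of the addition operations of the factor bundles $\underline{F_i}$ and $\underline{F_k}$ over $M$) rather than to try to argue directly with additive maps in $\DBun_M$.
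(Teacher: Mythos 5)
Your proposal is correct and follows essentially the same route as the paper: compare $\pi_i\iota_i +^j \pi_k\iota_k$ and $\pi_{ik}\iota_{ik}$ coordinate-wise using Proposition \ref{thm:addn_pb} together with the biproduct identities, and conclude by joint monicity of $\pi_1,\pi_2,\pi_3$. Your closing remark correctly identifies the one subtlety (that $+^j$ is the partial-bundle addition over $F_j$, reduced to the factor-wise additions over $M$ via \ref{thm:addn_pb}), which is exactly the point the paper's proof relies on.
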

\begin{proof}
Let $s = \pi_i\iota_i +^j \pi_k\iota_k$ and $t = \pi_{ik}\iota_{ik}$, and let $s_n = s\pi_n$ and $t_n = t\pi_n:F \rightarrow F_n$ for each $n = 1,2,3$.  By \ref{thm:addn_pb}, we deduce that
$$s_i = \pi_i\iota_i\pi_i + \pi_k\iota_k\pi_i = \pi_i + 0 = \pi_i = t_i\;:\;\underline{F} \rightarrow \underline{F_i},$$
where we write $+,0$ for the addition operations and zero morphisms carried by the additive category $\DBun_M$.  Similarly we deduce that $s_k = \pi_k = t_k$.  By \ref{thm:addn_pb} and \eqref{eq:endos_pij} we deduce also that $s_j = 0 = t_j$.  Since $\pi_1,\pi_2,\pi_3$ are jointly monic in $\X$ \pref{par:fi_pr}, the result is proved.
\end{proof}

\begin{ParSub}\label{par:b_pb}
Again suppose we are given data as in \ref{par:three_db}, and let $i,j,k$ be some enumeration of all the elements of $\{1,2,3\}$.  Employing the notation of \ref{par:three_db}, the endomorphisms $\pi_{ik}\iota_{ik},\;\pi_{ij}\iota_{ij}:\underline{F} \rightarrow \underline{F}$ of the biproduct $\underline{F} = \underline{F_1} \oplus_M \underline{F_2} \oplus_M \underline{F_3}$ satisfy the equations
\begin{equation}\label{eq:ieq}\pi_{ik}\iota_{ik}\pi_i = \pi_i = \pi_{ij}\iota_{ij}\pi_i\;\;:\;\;\underline{F} \rightarrow \underline{F_i}.\end{equation}
Hence by \ref{par:add_jpb} we can consider their sum
$$\pi_{ik}\iota_{ik} +^i \pi_{ij}\iota_{ij}\;:\;F \rightarrow F$$
under the operation of addition with respect to the $i$-th partial bundle of the given biproduct.  We deduce the following:
\end{ParSub}

\begin{LemSub}\label{thm:b_pb}
In the situation of \ref{par:b_pb},
$$\pi_{ik}\iota_{ik} +^i \pi_{ij}\iota_{ij} = 1_F\;.$$
\end{LemSub}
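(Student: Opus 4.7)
The plan is to imitate the proof of Lemma \ref{thm:inj_add}, testing the claimed equation against the three jointly monic projections $\pi_1,\pi_2,\pi_3:F \rightarrow F_n$ $(n=1,2,3)$ associated to the fibre product $F = F_1 \times_M F_2 \times_M F_3$ underlying the biproduct $\underline{F}$. Since these projections are jointly monic in $\X$ by \ref{par:fi_pr} (and $\{1,2,3\}$ is nonempty), it suffices to verify that the composites of $\pi_{ik}\iota_{ik} +^i \pi_{ij}\iota_{ij}$ and of $1_F$ with each $\pi_n$ coincide.

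First I would appeal to Proposition \ref{thm:addn_pb}, applied to the $i$-th partial bundle: since by \eqref{eq:ieq} both summands $\pi_{ik}\iota_{ik}$ and $\pi_{ij}\iota_{ij}$ satisfy $(-)\pi_i = \pi_i$, the sum under $+^i$ has $i$-th coordinate again $\pi_i$, which matches $1_F\pi_i = \pi_i$. For the other coordinates $n \in \{j,k\}$, Proposition \ref{thm:addn_pb} says that the $n$-th coordinate of the sum is computed by the ordinary addition $+$ in the commutative monoid $(\X\slash M)((F,\pi_i r_i),(F_n,r_n))$ induced by $\underline{F_n}$.

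Next I would compute these $n$-th coordinates directly from the defining equations of the injections $\iota_{ik},\iota_{ij}$ recorded in \ref{par:three_db}. For $n = j$: $\pi_{ik}\iota_{ik}\pi_j = 0$ (because $j\notin\{i,k\}$) while $\pi_{ij}\iota_{ij}\pi_j = \pi_j$, so the sum's $j$-th coordinate is $0 + \pi_j = \pi_j = 1_F \pi_j$. The case $n = k$ is symmetric: $\pi_{ik}\iota_{ik}\pi_k = \pi_k$ and $\pi_{ij}\iota_{ij}\pi_k = 0$, giving $\pi_k + 0 = \pi_k = 1_F \pi_k$. Joint monicity of $\pi_1,\pi_2,\pi_3$ then yields $\pi_{ik}\iota_{ik} +^i \pi_{ij}\iota_{ij} = 1_F$, as required.

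No step is really an obstacle here; the only subtlety is remembering that addition with respect to the $i$-th partial bundle fixes the $i$-th coordinate and adds the others fibrewise in $\DBun_M$, which is precisely the content of Proposition \ref{thm:addn_pb} and is what makes the verification reduce cleanly to the defining equations of the biproduct injections $\iota_{ik}, \iota_{ij}$.
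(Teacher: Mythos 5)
Your proposal is correct and follows essentially the same route as the paper: both compute the three composites $s\pi_n$ via Proposition \ref{thm:addn_pb} together with the defining equations of $\iota_{ik},\iota_{ij}$ from \ref{par:three_db} and \eqref{eq:ieq}, and conclude by the joint monicity of $\pi_1,\pi_2,\pi_3$ (which the paper leaves implicit). No gaps.
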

\begin{proof}
Let $s = \pi_{ik}\iota_{ik} +^i \pi_{ij}\iota_{ij}$, and let $s_n = s\pi_n$ for each $n \in \{1,2,3\}$.  By \ref{thm:addn_pb} and \eqref{eq:ieq}, we deduce that
$$s_i = \pi_i\;:\;\underline{F} \rightarrow \underline{F_i}$$
$$s_j = \pi_{ik}\iota_{ik}\pi_j + \pi_{ij}\iota_{ij}\pi_j = 0 + \pi_j = \pi_j\;:\;\underline{F} \rightarrow \underline{F_j}$$
$$s_k = \pi_{ik}\iota_{ik}\pi_k + \pi_{ij}\iota_{ij}\pi_k = \pi_k + 0 = \pi_k\;:\;\underline{F} \rightarrow \underline{F_k}$$
where we write $+,0$ for the addition operations and zero morphisms carried by the additive category $\DBun_M$.
\end{proof}

\begin{ThmSub}\label{thm:evcx_det_cx}
Let $K:TE \rightarrow E$ be an effective vertical connection on $\underline{E} = (E,q,\sigma,\zeta,\lambda)$.
\begin{enumerate}
\item There is a unique horizontal connection $H:E \times_M TM \rightarrow TE$ on $\underline{E}$ such that $(K,H)$ is a connection on $\underline{E}$.  Explicitly, $H$ is the injection 
$$\iota_{12}\;:\;\underline{E} \oplus_M \underline{T}M \longrightarrow \underline{E} \oplus_M \underline{T}M \oplus_M \underline{E}$$
considered in \ref{thm:hcx_for_evcx}.
\item Moreover, given any $H:E \times_M TM \rightarrow TE$ in $\X$, the following are equivalent: (a) $(K,H)$ is a connection on $\underline{E}$, (b) $H$ is a horizontal connection on $\underline{E}$ and axiom \textnormal{\ref{def:cx}(C.5)} holds, (c) $H$ is the morphism $\iota_{12}$ considered in $\ref{thm:hcx_for_evcx}$.
\end{enumerate}
\end{ThmSub}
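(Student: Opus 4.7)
The strategy is to use Theorem~\ref{thm:hcx_for_evcx} together with Lemma~\ref{thm:b_pb} to show that the horizontal connection $\iota_{12}$ constructed in \ref{thm:hcx_for_evcx} automatically satisfies the remaining axiom~\textnormal{(C.6)}, so that $(K,\iota_{12})$ is a connection on $\underline{E}$. Uniqueness of $H$ in~(1) will then follow from \ref{par:uniq_hc_vc} (or equivalently from the uniqueness clause of Theorem~\ref{thm:hcx_for_evcx}), and part~(2) will be a packaging of the equivalences.

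The main technical step is to identify the morphism $\mu : E \times_M E \to TE$ of \ref{par:dbun}(4) with the canonical biproduct injection $\iota_{31} : \underline{E} \oplus_M \underline{E} \to \underline{E} \oplus_M \underline{T}M \oplus_M \underline{E}$ (with the indices listed in the order $3,1$). By construction, $\mu$ is a sum in the commutative monoid $T\underline{E}$---the second partial bundle of the biproduct provided by Theorem~\ref{thm:evcx_bipr}---of the morphisms $\pi_1\lambda$ and $\pi_2 0_E$. I would apply each of the three biproduct projections $p_E, T(q), K$ in turn, using that $p_E$ and $K$ are linear morphisms $T\underline{E} \to \underline{E}$ over $p_M$ (the first being part of the proof of Lemma~\ref{thm:vcx_bipr2}, the second being axiom~\textnormal{(C.1)}) and that $T(q)$ is the bundle projection of $T\underline{E}$ and hence constant on the fibres of $T(\sigma)$. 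Combining this with the elementary identities $K\lambda = 1_E$, $K 0_E = q\zeta$ (from axiom~\textnormal{(C.2)}), $p_E \lambda = q\zeta$, $p_E 0_E = 1_E$, and the unit law in $\underline{E}$, one obtains
\[
p_E \mu = \pi_2,\qquad T(q)\mu = (q\pi_1)0_M,\qquad K\mu = \pi_1.
\]
Joint monicity of the projections of the biproduct \pref{par:fi_pr} then identifies $\mu$ with $\iota_{31}$.

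Once this identification is in place, $\langle K, p_E\rangle \mu = \pi_{31}\iota_{31}$. Since this endomorphism of $TE$ leaves $p_E$ and $K$ invariant and sends $T(q)$ to zero, joint monicity forces it to coincide with $\pi_{13}\iota_{13}$. The addition~$+$ appearing in axiom~\textnormal{(C.6)} is, by inspection of its definition, precisely the operation $+^1$ with respect to the first partial bundle $\underline{T}E$, and $UH = \pi_{12}\iota_{12}$. Therefore axiom~\textnormal{(C.6)} reads
\[
\pi_{13}\iota_{13} \;+^1\; \pi_{12}\iota_{12} \;=\; 1_{TE},
\]
which is exactly the content of Lemma~\ref{thm:b_pb} applied in the biproduct $\underline{E} \oplus_M \underline{T}M \oplus_M \underline{E}$ with $i = 1$, $j = 2$, $k = 3$. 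This completes the verification that $(K,\iota_{12})$ is a connection.

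For part~(2), the implication (a)$\Rightarrow$(b) is immediate from Definition~\ref{def:cx}; (b)$\Rightarrow$(c) is the uniqueness clause of Theorem~\ref{thm:hcx_for_evcx}; and (c)$\Rightarrow$(a) is the verification just given. The main obstacle I anticipate is the careful index bookkeeping around the identification $\mu = \iota_{31}$ (rather than $\iota_{13}$), since the defining formula for $\mu$ treats its two arguments asymmetrically via $\lambda$ and $0_E$, and this asymmetry is what matches the codomain ordering $3,1$; everything else in the argument is essentially a reinterpretation of the original axioms through the biproduct lens developed in Section~\ref{sec:bipr_cx}.
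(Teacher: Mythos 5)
Your proposal is correct and follows essentially the same route as the paper: reduce everything to verifying axiom (C.6) for $H=\iota_{12}$ via Theorem \ref{thm:hcx_for_evcx}, and then recognize (C.6) as the instance $\pi_{13}\iota_{13}+^1\pi_{12}\iota_{12}=1_{TE}$ of Lemma \ref{thm:b_pb}. The only (harmless) variation is that you identify $\langle K,p_E\rangle\mu$ with $\pi_{13}\iota_{13}$ by directly computing the composites of $\mu$ with the three jointly monic projections to get $\mu=\iota_{31}$, whereas the paper reaches the same identification by invoking Lemma \ref{thm:inj_add} to write $\pi_{13}\iota_{13}=\pi_1\iota_1+^2\pi_3\iota_3$ and then unfolding $+^2$ as $T(\sigma)$; the underlying calculations are the same.
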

\begin{proof}
It suffices to prove 2.  Clearly (a) implies (b).  By Theorem \ref{thm:hcx_for_evcx}, we know that (b) is equivalent to (c).  Hence it suffices to let $H = \iota_{12}$ and prove that $(K,H)$ is a connection on $\underline{E}$.  Since (c) implies (b) it suffices to show that $(K,H)$ satisfies axiom \textnormal{\ref{def:cx}(C.6)}.  Letting $\underline{F_1} = \underline{E}$, $\underline{F_2} = \underline{T}M$, $\underline{F_3} = \underline{E}$, and $F = TE$, we know by \ref{thm:evcx_bipr} that $F$ underlies a biproduct $\underline{F} = \underline{F_1} \oplus_M \underline{F_2} \oplus_M \underline{F_3}$ with first and second partial bundles $\underline{T}E$ and $T\underline{E}$, respectively, and with projections $\pi_1 = p_E$, $\pi_2 = T(q)$, $\pi_3 = K$ and injections $\iota_1 = 0_E$, $\iota_2 = T(\zeta)$, $\iota_3 = \lambda$.  Taking $i = 1$, $j = 2$, $k = 3$, we find ourselves in the situation of \ref{par:b_pb}, so by Lemma \ref{thm:b_pb} we deduce that
$$\pi_{13}\iota_{13} +^1 \pi_{12}\iota_{12} = 1_F\;:\;F \rightarrow F,$$
but by Lemma \ref{thm:inj_add} we also know that $\pi_{13}\iota_{13} = \pi_1\iota_1 \;+^2\; \pi_3\iota_3\;:\;F \rightarrow F$, so
\begin{equation}\label{eq:gen_csix}(\pi_1\iota_1 +^2 \pi_3\iota_3) +^1 \pi_{12}\iota_{12} = 1_F\;:\;F \rightarrow F.\end{equation}
We claim that the latter equation is precisely axiom \ref{def:cx}(C.6).  To see this, first note that
$$\pi_1\iota_1 = p_E0_E:TE \rightarrow TE,\;\;\pi_3\iota_3 = K\lambda:TE \rightarrow TE,\;\;\iota_{12} = H,$$
and $\pi_{12} = \langle\pi_1,\pi_2\rangle = \langle p_E,T(q) \rangle = U:TE \rightarrow E \times_M TM$ in the notation of \ref{def:cx}, so our equation \eqref{eq:gen_csix} can be rewritten as
$$(K\lambda +^2 p_E0_E) +^1 UH = 1_{TE}\;:\;TE \rightarrow TE$$
since $+^2$ is commutative.  In the terminology of \ref{par:add_jpb}, $+^1$ and $+^2$ denote the operations of addition with respect to the first and second partial bundles $\underline{T}E$ and $T\underline{E}$, respectively, of the biproduct $\underline{F} = \underline{E} \oplus_M \underline{T}M \oplus_M \underline{E}$.  Hence, since the addition carried by $T\underline{E}$ is $T(\sigma)$ we can express $K\lambda +^2 p_E0_E$ as a composite
$$K\lambda +^2 p_E0_E  = \left(TE \xrightarrow{\langle K\lambda,p_E0_E\rangle} TE \times_{TM} TE = T(E \times_M E) \xrightarrow{T(\sigma)} TE\right).$$
We can rewrite the latter composite in terms of the morphism $\mu$ defined in \ref{par:dbun}(4), thus obtaining the following equation
$$K\lambda +^2 p_E0_E = \left(TE \xrightarrow{\langle K,p_E\rangle} E \times_M E \xrightarrow{\mu} TE\right),$$
noting that $\langle K,p_E \rangle$ is well-defined since $Kq = T(q)p_M = p_Eq$ by \ref{def:cx}(C.1) and the naturality of $p$.  Therefore our equation \eqref{eq:gen_csix} can be rewritten as
$$\langle K,p_E\rangle\mu \;+^1\; UH \;=\; 1_{TE}\;:\;TE \rightarrow TE\;.$$
Since $+^1$ is induced by the addition $+_E$ carried by the first partial bundle $\underline{T}E$, this equation is precisely axiom \ref{def:cx}(C.6).
\end{proof}

\begin{RemSub}
Given a vertical connection $K$ and a horizontal connection $H$, on $\underline{E}$, recall that $(K,H)$ is a connection if and only if the compatibility axioms (C.5) and (C.6) of \ref{def:cx} hold.  But by \ref{thm:evcx_det_cx}(2) and \ref{thm:cx_fp}, $(K,H)$ is a connection if and only if (C.5) holds and $K$ is effective.  Thus one obtains an equivalent definition of the notion of connection, in which the compatibility axiom (C.6) is replaced by a condition that does not make reference to $H$.
\end{RemSub}

\section{Connections: Equivalent formulations}\label{sec:cx_eq_form}

Let $\underline{E} = (E,q,\sigma,\zeta,\lambda)$ be a differential bundle over $M$, satisfying Basic Condition \ref{par:std_assn}.  In the present section, we establish certain equivalent formulations of the notion of connection in terms of a single morphism $K:TE \rightarrow E$.

\newpage

\begin{ThmSub}\label{thm:eq_cond_k}
Let $K:TE \rightarrow E$ be a morphism in $\X$.  Then the following conditions are equivalent:
\begin{enumerate}
\item There exists a (necessarily unique) morphism $H:E \times_M TM \rightarrow TE$ in $\X$ such that $(K,H)$ is a connection on $\underline{E}$.
\item $K$ is an effective vertical connection on $\underline{E}$.
\item $TE$ underlies a biproduct of differential bundles $\underline{E} \oplus_M \underline{T}M \oplus_M \underline{E}$ whose third projection is $K$, whose third injection is $\lambda$, and whose first and second partial bundles are $\underline{T}E$ and $T\underline{E}$, respectively.
\item $K$ is a retraction of $\lambda$, and $TE$ underlies a product of differential bundles \newline $\underline{E} \times_M \underline{T}M \times_M \underline{E}$ whose third projection is $K$ and whose first and second partial bundles are $\underline{T}E$ and $T\underline{E}$, respectively.
\end{enumerate}
\end{ThmSub}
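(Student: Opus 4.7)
The plan is to prove the four conditions equivalent by establishing the chain $(1) \Leftrightarrow (2) \Leftrightarrow (3) \Leftrightarrow (4)$, leveraging the machinery already developed in Sections \ref{sec:bipr_cx} and in Theorem \ref{thm:evcx_det_cx}. Almost all of the technical content has been done: the point of this theorem is to package the results uniformly.

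First I would handle $(1) \Leftrightarrow (2)$. The implication $(2) \Rightarrow (1)$ is immediate from Theorem \ref{thm:evcx_det_cx}(1), which directly produces the required $H$ and asserts its uniqueness. For $(1) \Rightarrow (2)$: given a connection $(K,H)$, the morphism $K$ is by definition a vertical connection, and Proposition \ref{thm:cx_fp} asserts that the diagram \eqref{eq:cx_ld} is then a fibre product diagram, so $K$ is effective.

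Next I would prove $(2) \Leftrightarrow (3)$ by combining Lemmas \ref{thm:vcx_bipr1} and \ref{thm:vcx_bipr2}. For $(2) \Rightarrow (3)$: since $K$ is effective vertical, Lemma \ref{thm:vcx_bipr1} gives the biproduct structure with projections $p_E, T(q), K$; Lemma \ref{thm:vcx_bipr2}(1) identifies the first and second partial bundles as $\underline{T}E$ and $T\underline{E}$; and Lemma \ref{thm:vcx_bipr2}(2) identifies the third injection as $\lambda$. For $(3) \Rightarrow (2)$: the existence of such a biproduct with third projection $K$ and third injection $\lambda$ forces $\lambda K = \iota_3 \pi_3 = 1_E$, so $K$ is a retraction of $\lambda$; Lemma \ref{thm:vcx_bipr2}(1) (read in reverse) then shows that axioms (C.1) and (C.2) hold, so $K$ is a vertical connection; and Lemma \ref{thm:vcx_bipr1} converts the biproduct back into the required fibre product diagram, so $K$ is effective.

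Finally I would deduce $(3) \Leftrightarrow (4)$ from the fact that $\DBun_M$ is an additive category \pref{thm:dbunm_add}, so that finite products and biproducts in $\DBun_M$ coincide. The implication $(3) \Rightarrow (4)$ is then immediate: the biproduct is in particular a product with the same projections and partial bundles, and $\lambda K = \iota_3 \pi_3 = 1_E$ gives the retraction condition. For $(4) \Rightarrow (3)$, any such product acquires a canonical biproduct structure in $\DBun_M$, and the retraction hypothesis $\lambda K = 1_E$ together with the identification of the first and second partial bundles as $\underline{T}E$ and $T\underline{E}$ allows the argument in the second half of the proof of Lemma \ref{thm:vcx_bipr2}(2) to be applied verbatim in order to identify the third injection with $\lambda$. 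The only mild subtlety to keep in mind throughout is that all of these equivalent data are canonically determined: for condition (3), the biproduct is unique up to concrete isomorphism by Remark \ref{rem:kbp_un_ci}, and for condition (1), the horizontal connection $H$ is unique by \ref{par:uniq_hc_vc}. I do not expect any step to be a serious obstacle, since the work has been organised precisely so that this synthesis is essentially formal.
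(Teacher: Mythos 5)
Your proposal is correct and follows essentially the same route as the paper: both rest on Proposition \ref{thm:cx_fp}, Theorems \ref{thm:evcx_bipr} and \ref{thm:evcx_det_cx}, and Lemmas \ref{thm:vcx_bipr1} and \ref{thm:vcx_bipr2}, the only difference being that you organise the argument as a chain of biconditionals $(1)\Leftrightarrow(2)\Leftrightarrow(3)\Leftrightarrow(4)$ whereas the paper closes the cycle $1\Rightarrow2\Rightarrow3\Rightarrow4\Rightarrow2$ together with $2\Rightarrow1$, which saves one direct argument (your separate $(3)\Rightarrow(2)$) but is otherwise identical in content.
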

\begin{proof}
A morphism $H$ as in 1 is unique if it exists, by \ref{par:uniq_hc_vc} (or, alternatively, by \ref{thm:evcx_det_cx} and \ref{thm:cx_fp}, which were proved without recourse to \ref{par:uniq_hc_vc}).

By \ref{thm:cx_fp}, 1 implies 2.  By Theorem \ref{thm:evcx_bipr}, 2 implies 3.  Also, 3 clearly implies 4.  Further, 2 implies 1, by Theorem \ref{thm:evcx_det_cx}.  Hence it suffices to prove that 4 implies 2.

Suppose that 4 holds.   Since the first and second partial bundles of the product $\underline{E} \times_M \underline{T}M \times_M \underline{E}$ are $\underline{T}E$ and $T\underline{E}$, respectively, this product necessarily has projections $\pi_1 = p_E$, $\pi_2 = T(q)$, and $\pi_3 = K$.  Hence, since $K$ is a retraction of $\lambda$ and finite products in $\DBun_M$ are biproducts \eqref{thm:dbunm_add}, we deduce by Lemma \ref{thm:vcx_bipr2} that $K$ is a vertical connection on $\underline{E}$.  By \ref{thm:vcx_bipr1} we therefore deduce that 2 holds.
\end{proof}

We now deduce that a connection is equivalently given by a morphism $K$ satisfying the equivalent conditions in \ref{thm:eq_cond_k}, with no further specified structure:

\begin{ThmSub}\label{thm:main}
Let $\underline{E} = (E,q,\sigma,\zeta,\lambda)$ be a differential bundle over an object $M$ in an arbitrary tangent category $\X$, and assume that $\underline{E}$ satisfies Basic Condition \ref{par:std_assn}.
\begin{enumerate}
\item A connection on $\underline{E}$ is equivalently given by a morphism $K:TE \rightarrow E$ such that $TE$ underlies a biproduct of differential bundles $\underline{E} \oplus_M \underline{T}M \oplus_M \underline{E}$ whose third projection is $K$, whose third injection is $\lambda$, and whose first and second partial bundles are $\underline{T}E$ and $T\underline{E}$, respectively.
\item A connection on $\underline{E}$ is equivalently given by a retraction $K:TE \rightarrow E$ of $\lambda$ such that $TE$ underlies a product of differential bundles $\underline{E} \times_M \underline{T}M \times_M \underline{E}$ whose third projection is $K$ and whose first and second partial bundles are $\underline{T}E$ and $T\underline{E}$, respectively.
\item A connection on $\underline{E}$ is equivalently given by an effective vertical connection $K$ on $\underline{E}$, i.e., a vertical connection $K:TE \rightarrow E$ such that the following is a fibre product diagram in $\X$:
$$
\xymatrix{
            & TE \ar[dl]_{p_E} \ar[d]|{T(q)} \ar[dr]^K & \\
E \ar[dr]_q & TM \ar[d]|{p_M}                          & E \ar[dl]^q\\
   & M                                                 &
}
$$
\end{enumerate}
\end{ThmSub}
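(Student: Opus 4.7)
The plan is to deduce Theorem \ref{thm:main} as an essentially immediate consequence of Theorem \ref{thm:eq_cond_k}, combined with the uniqueness statement recorded in \ref{par:uniq_hc_vc}. Each of the three parts asserts that a connection on $\underline{E}$ is ``equivalently given by'' a morphism $K \colon TE \to E$ of a particular kind, i.e.\ that the assignment $(K,H) \mapsto K$ restricts to a bijection between connections on $\underline{E}$ and the morphisms $K$ described in that part.

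First I would note that, by \ref{par:uniq_hc_vc}, any two connections on $\underline{E}$ with the same vertical part coincide, so the assignment $(K,H) \mapsto K$ is injective. It then remains only to identify, in each of parts (1), (2), (3), its image as exactly the class of morphisms $K$ described there. But the image is by definition the class of morphisms $K$ for which there exists some (necessarily unique) $H$ making $(K,H)$ a connection, which is precisely condition (1) of Theorem \ref{thm:eq_cond_k}.

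I would then match each part of Theorem \ref{thm:main} to a condition in Theorem \ref{thm:eq_cond_k}: part (1) of \ref{thm:main} is condition (3) of \ref{thm:eq_cond_k}, part (2) of \ref{thm:main} is condition (4) of \ref{thm:eq_cond_k}, and part (3) of \ref{thm:main} is condition (2) of \ref{thm:eq_cond_k}. The equivalence of all four conditions in Theorem \ref{thm:eq_cond_k} therefore supplies the required bijective correspondence in each case, completing the proof.

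There is no real obstacle here: all of the substantive work, namely constructing $H$ from an effective $K$ (Theorem \ref{thm:evcx_det_cx}), producing the biproduct decomposition from a connection (Theorem \ref{thm:bipr_cx}), and showing that a retraction $K$ fitting into such a product/biproduct is automatically an effective vertical connection (Lemmas \ref{thm:vcx_bipr1} and \ref{thm:vcx_bipr2}), has been assembled in the preceding sections and packaged into Theorem \ref{thm:eq_cond_k}. Thus the proof of Theorem \ref{thm:main} amounts to little more than invoking that theorem and pointing out the bijective reformulation afforded by the uniqueness of $H$ given $K$.
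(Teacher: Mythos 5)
Your proposal is correct and follows exactly the paper's own route: the paper likewise derives Theorem \ref{thm:main} directly from Theorem \ref{thm:eq_cond_k}, observing that $(K,H) \mapsto K$ gives a bijective correspondence between connections and morphisms $K$ satisfying the equivalent conditions there. Your explicit matching of parts (1), (2), (3) to conditions (3), (4), (2) and the appeal to \ref{par:uniq_hc_vc} for injectivity merely spell out what the paper leaves implicit.
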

\begin{proof}
By Theorem \ref{thm:eq_cond_k}, the assignment $(K,H) \mapsto K$ defines a bijective correspondence between connections $(K,H)$ on $\underline{E}$ and morphisms $K:TE \rightarrow E$ satisfying the equivalent conditions of Theorem \ref{thm:eq_cond_k}.
\end{proof}

\begin{RemSub}
Although Theorem \ref{thm:main}(1) demands the \textit{existence} of a biproduct with certain further properties, such a biproduct is in fact unique up to \textit{concrete isomorphism} (\ref{eq:ci}, \ref{rem:kbp_un_ci}), so its underlying object of $\X \slash M$, its zero section, and its lift, as well as its projections and injections, are uniquely determined (\ref{eq:ci}, \ref{rem:kbp_un_ci}).  Indeed, if $\underline{F} = \underline{E} \oplus_M \underline{T}M \oplus_M \underline{E}$ is a biproduct with the given properties, then its projection morphisms must be $\pi_1 = p_E$, $\pi_2 = T(q)$, $\pi_3 = K$, so its underlying object of $\X \slash M$ must be $\widehat{q} = p_Eq:TE \rightarrow M$ and its injection morphisms must be $\iota_1 = 0_E$, $\iota_2 = T(\zeta)$, $\iota_3 = \lambda$, by \ref{thm:par_bun}(3).
\end{RemSub}

Theorem \ref{thm:main} entails the following characterizations of affine connections, recalling from \ref{par:exa_std_assn} that the tangent bundle $\underline{T}M$ always satisfies Basic Condition \ref{par:std_assn}:

\begin{ThmSub}
Let $M$ be an object of an arbitrary tangent category $\X$.
\begin{enumerate}
\item An affine connection on $M$ is equivalently given by a morphism $K:T^2M \rightarrow TM$ such that $T^2M$ underlies a biproduct of differential bundles $\underline{T}M \oplus_M \underline{T}M \oplus_M \underline{T}M$ whose third projection is $K$, whose third injection is $\ell_M:TM \rightarrow T^2M$, and whose first and second partial bundles are $\underline{T}(TM)$ and $T(\underline{T}M)$, respectively.
\item An affine connection on $M$ is equivalently given by a retraction $K:T^2M \rightarrow TM$ of $\ell_M:TM \rightarrow T^2M$ such that $T^2M$ underlies a product of differential bundles $\underline{T}M \times_M \underline{T}M \times_M \underline{T}M$ whose third projection is $K$ and whose first and second partial bundles are $\underline{T}(TM)$ and $T(\underline{T}M)$, respectively.
\item An affine connection on $M$ is equivalently given by an effective vertical connection $K$ on $\underline{T}M$, i.e., a vertical connection $K:T^2M \rightarrow TM$ on $\underline{T}M$ such that the following is a fibre product diagram in $\X$:
$$
\xymatrix{
                 & T^2M \ar[dl]_{p_{TM}} \ar[d]|{T(p_M)} \ar[dr]^K & \\
TM \ar[dr]_{p_M} & TM \ar[d]|{p_M}                                 & TM \ar[dl]^{p_M}\\
                 & M                                              &
}
$$
\end{enumerate}
\end{ThmSub}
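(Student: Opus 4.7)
The plan is to derive this theorem as a direct specialization of Theorem \ref{thm:main} to the case $\underline{E} = \underline{T}M$. By Definition \ref{def:cx}(4), an affine connection on $M$ is precisely a connection on the differential bundle $\underline{T}M$. By \ref{par:exa_std_assn}(1), the tangent bundle $\underline{T}M$ always satisfies Basic Condition \ref{par:std_assn}, so Theorem \ref{thm:main} is applicable with $\underline{E}$ taken to be $\underline{T}M$.

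The substitution to carry out is straightforward: writing $\underline{T}M = (TM, p_M, +_M, 0_M, \ell_M)$, we set $E = TM$, $q = p_M$, and $\lambda = \ell_M$, so that $TE = T^2M$, $p_E = p_{TM}$, $T(q) = T(p_M)$, and the associated differential bundles over $TM$ become $\underline{T}E = \underline{T}(TM)$ and $T\underline{E} = T(\underline{T}M)$. Substituting these identifications into parts (1), (2), (3) of Theorem \ref{thm:main} yields parts (1), (2), (3) of the present theorem verbatim; in particular, the fibre product diagram in part (3) is obtained from the diagram in Theorem \ref{thm:main}(3) by replacing each occurrence of $E$ by $TM$ and each occurrence of $q$ by $p_M$.

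There is essentially no obstacle: the work has already been done in establishing Theorem \ref{thm:main}, and the present statement is its image under the substitution $\underline{E} \mapsto \underline{T}M$. The only point worth mentioning explicitly is that Basic Condition \ref{par:std_assn} is automatically satisfied for $\underline{T}M$, so no additional hypothesis on $M$ is needed beyond its being an object of the arbitrary tangent category $\X$.

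\begin{proof}
By \ref{par:exa_std_assn}(1), the differential bundle $\underline{T}M = (TM, p_M, +_M, 0_M, \ell_M)$ satisfies Basic Condition \ref{par:std_assn}. By Definition \ref{def:cx}(4), an affine connection on $M$ is by definition a connection on $\underline{T}M$. Hence the three formulations follow by applying the corresponding parts of Theorem \ref{thm:main} with $\underline{E} = \underline{T}M$, thereby specializing $E$ to $TM$, $q$ to $p_M$, $\lambda$ to $\ell_M$, $\underline{T}E$ to $\underline{T}(TM)$, and $T\underline{E}$ to $T(\underline{T}M)$.
\end{proof}
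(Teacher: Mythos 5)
Your proof is correct and matches the paper's own treatment exactly: the paper states this theorem as an immediate consequence of Theorem \ref{thm:main} applied to $\underline{E} = \underline{T}M$, citing precisely the fact from \ref{par:exa_std_assn} that $\underline{T}M$ always satisfies Basic Condition \ref{par:std_assn}. The substitution details you spell out ($E = TM$, $q = p_M$, $\lambda = \ell_M$, etc.) are the intended ones.
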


\bibliographystyle{amsplain}
\bibliography{bib}

\end{document}